\DeclareMathOperator{\im}{Im}
\DeclareMathOperator{\re}{Re}
\renewcommand{\Re}{\text{\rm Re}}
\renewcommand{\Im}{\text{\rm Im}}
\newcommand{\f}{\frac}
\newcommand{\bi}{\bibitem}
\newcommand{\no}{\notag}
\newcommand{\lb}{\label}
\newcommand{\ol}{\overline}
\newcommand{\wti}{\widetilde}
\newcommand{\eps}{{\varepsilon}}
\newcommand{\al}{\alpha}
\newcommand{\be}{\beta}
\newcommand{\de}{\delta}
\newcommand{\ga}{\gamma}
\newcommand{\te}{\theta}
\newcommand{\C}{{\mathbb C}}
\newcommand{\D}{{\mathbb D}}
\newcommand{\N}{{\mathbb N}}
\newcommand{\R}{{\mathbb R}}
\newcommand{\Z}{{\mathbb Z}}
\newcommand{\cL}{{\mathcal L}}
\newcommand{\cM}{{\mathcal M}}
\newcommand{\cK}{{\mathcal K}}
\newcommand{\dD}{{\partial\D}}
\newcommand{\EC}{{\mathcal E}}
\newcommand{\cmv}{{\mathcal C}}
\newcommand{\ac}{\rm{ac}}
\theoremstyle{plain}
\newtheorem{theorem}{Theorem}
\newtheorem{lemma}[theorem]{Lemma}
\newtheorem{proposition}[theorem]{Proposition}
\newtheorem{corollary}[theorem]{Corollary}
\theoremstyle{definition}
\newtheorem{definition}[theorem]{Definition}
\newtheorem{example}[theorem]{Example}
\theoremstyle{remark}
\newtheorem*{remark}{Remark}
\numberwithin{equation}{section}
\numberwithin{theorem}{section}
\begin{document}

\title[Right limits for CMV matrices]{Right limits and reflectionless measures\\ for CMV matrices}
\author{Jonathan Breuer, Eric Ryckman, and Maxim Zinchenko}
\address{Mathematics 253-37, California Institute of Technology, Pasadena CA 91125-0001, USA}
\email{jbreuer@caltech.edu}
\email{eryckman@caltech.edu}
\email{maxim@caltech.edu}
\date{\today}
\keywords{Right limits, reflectionless property, CMV operators, ratio asymptotics}%
\subjclass[2000]{42C05, 30E10, 34L40}%

\begin{abstract}
We study CMV matrices by focusing on their right-limit sets. We
prove a CMV version of a recent result of Remling dealing with the
implications of the existence of absolutely continuous spectrum, and
we study some of its consequences. We further demonstrate the
usefulness of right limits in the study of weak asymptotic
convergence of spectral measures and ratio asymptotics for
orthogonal polynomials by extending and refining earlier results of
Khrushchev. To demonstrate the analogy with the Jacobi case, we
recover corresponding previous results of Simon using the same
approach.

\end{abstract}

\maketitle

%
%
%
%

\section{Introduction}
This paper considers some issues in the spectral theory of CMV
matrices viewed through the lens of the notion of right limits. In
particular, a central theme will be the fact that one may use the
properties of right limits of a given CMV matrix to deduce relations
between the asymptotics of its entries and its spectral measure.

CMV matrices (see Definition \ref{CMV-matrix} below) were named
after Cantero, Moral and Vel\'azquez \cite{CMV} and may be described
as the unitary analog of Jacobi matrices: they arise naturally in
the theory of orthogonal polynomials on the unit circle (OPUC) in
much the same way that Jacobi matrices arise in the theory of
orthogonal polynomials on the real line (OPRL).

Two related topics will be at the focus of our discussion.  The first is the extension to the CMV setting of
a collection of results, proven recently by Remling \cite{Remling},
describing various consequences of the existence of absolutely
continuous spectrum of Jacobi matrices. The second topic is the
simplification of various elements of Khrushchev's theory of weak
limits of spectral measures, through the understanding that the
matrices at the center of attention have right limits in a very
special class.

As we shall see, these two subjects are intimately connected through
the notion of reflectionless whole-line CMV matrices. This is a
concept that has been extensively investigated in recent years, in
the context of both CMV and Jacobi matrices (\cite{Cr89},
\cite{DS83}, \cite{GKT96}--\cite{GY06}, \cite{GZ09},
\cite{Jo82}--\cite{KK88}, \cite{MPV08}--\cite{PR08}, \cite{RemSc},
\cite{Remling}, \cite{Si07a}--\cite{SY97}) and was seen to have
numerous applications in their spectral theory. There are various
definitions of this notion, all of which turn out to be equivalent
in the Jacobi matrix case. We shall show that this is not true in
the CMV case. In particular, we construct an example of a whole-line
CMV matrix that is not reflectionless in the spectral-theoretic
sense, while all of its diagonal spectral measures are
reflectionless in the measure-theoretic sense. We will show,
however, that this may only happen for a very limited class of CMV
matrices. Their existence in the CMV case, together with Remling's
Theorem (Theorem~\ref{Remling} below), provides for a simple proof
of Khrushchev's Theorem (Theorem~\ref{Khrushchev} below).

We should remark that ours is not the first paper to deal with right limits of CMV matrices.  For other examples and related results, see for instance \cite{Gol-Nevai} and \cite{Last-Simon}.

In order to describe our results, some notation is needed: given a
probability measure, $\mu$, on the boundary of the unit disc,
$\partial \D$, we let $\{ \Phi_n(z) \}_{n=0}^\infty$ and $\{
\varphi_n(z) \}_{n=0}^\infty$ denote the monic orthogonal
 and the orthonormal polynomials one gets by applying
the Gram--Schmidt procedure to $1,z,z^2,\dots$ (we assume throughout
that the support of $\mu$ is an infinite set so the polynomial
sequences are indeed infinite). The $\Phi_n$ satisfy the Szeg\H{o}
recurrence equation:
\begin{equation} \lb{SzRec}
\Phi_{n+1}(z)=z\Phi_n(z)-\overline{\alpha_n} \Phi_n^*(z)\end{equation} where
$\{\alpha_n\}_{n=0}^\infty$ is a sequence of parameters satisfying
$|\alpha_n|<1$ and $\Phi_n^*(z)=z^n \overline{\Phi_n(1/\overline{z})}$. We call
$\{\alpha_n\}_{n=0}^\infty$ the Verblunsky coefficients associated with $\mu$.
As is well known \cite{OPUC1}, the sequence $\{\alpha_n\}_{n=0}^\infty$ may be
used to construct a semi-infinite 5-diagonal matrix, $\cmv$, (called the CMV
matrix) such that the operator of multiplication by $z$ on $L^2(\partial \D,
d\mu)$ is unitarily equivalent to the operator $\cmv$ on $\ell^2(\Z_+)$
($\Z_+=\{0,1,2,\ldots\}$). Explicitly, $\cmv$ is given by
\begin{equation} \label{CMV-matrix}
\cmv = \begin{pmatrix} {}& \bar\alpha_0 & \bar\alpha_1 \rho_0 &
\rho_1 \rho_0 & 0 & 0 & \dots &
{} \\
{}& \rho_0 & -\bar\alpha_1 \alpha_0 & -\rho_1 \alpha_0 & 0 & 0 &
\dots &
{} \\
{}& 0 & \bar\alpha_2\rho_1 & -\bar\alpha_2 \alpha_1 & \bar\alpha_3
\rho_2 & \rho_3 \rho_2 & \dots & {} \\
{}& 0 & \rho_2 \rho_1 & -\rho_2 \alpha_1 & -\bar\alpha_3 \alpha_2 &
-\rho_3 \alpha_2 & \dots & {} \\
{}& 0 & 0 & 0 & \bar\alpha_4 \rho_3 & -\bar\alpha_4 \alpha_3 & \dots
&
{} \\
{}& \dots & \dots & \dots & \dots & \dots & \dots & {}
\end{pmatrix}
\end{equation}
with $\rho_n=\left( 1-|\alpha_n|^2 \right)^{1/2}$.

Now, for a probability measure $\mu$ on $\partial \D$, let
$d\mu(\theta)=w(\theta)d\theta+d\mu_{\textrm{sing}}(\theta)$ be the
decomposition into its absolutely continuous and singular parts (with respect
to the Lebesgue measure). If $\cmv$ is the corresponding CMV matrix, we define
the \emph{essential support of the absolutely continuous spectrum of $\cmv$} to
be the set $\Sigma_{\ac}(\cmv) \equiv \{\theta \mid w(\theta)>0 \}$. Clearly,
$\Sigma_{\ac}(\cmv)$ is only defined up to sets of Lebesgue measure zero, so
the symbol and the name should be understood as representing elements in an
equivalence class of sets rather than a particular set. For the sake of
simplicity we will ignore this point in our discussion.

The first part of this paper deals with proving the analog of Remling's Theorem
(Theorem 1.4 in \cite{Remling}) for CMV matrices and deriving some
consequences. In a nutshell, Remling's Theorem for CMV matrices says that for
any given CMV matrix, $\cmv$, all of its right limits are reflectionless on
$\Sigma_{\ac}(\cmv)$ (see Definitions \ref{rlimit def} and \ref{reflectionless} and Theorem~\ref{Remling}
below). Here is a consequence that will also provide the link to Khrushchev's
theory (the Jacobi analog was stated and proved in \cite{Remling}):

\begin{theorem} \label{sparse}
Let $\{\alpha_n\}_{n=0}^\infty$ and
$\{\widetilde{\alpha}_n\}_{n=0}^\infty$ be two sequences of
Verblunsky coefficients such that $($with $\Delta_n = \widetilde{\alpha}_n - \alpha_n$$)$\\
$(i)$ $|\alpha_n|<1$, $|\widetilde{\alpha}_n|<1$ for all $n$. \\
$(ii)$ There exist sequences $\{m_j\}_{j=1}^\infty$,
$\{n_j\}_{j=1}^\infty$ with $n_j-m_j \rightarrow \infty$ so that
$$\lim_{j \rightarrow \infty} \sup_{m_j \leq n
<n_j}|\Delta_n|=0.$$
$(iii)$ $\limsup_{j \rightarrow \infty}|\Delta_{n_j}|>0$. \\
Furthermore, let $\cmv$ and $\widetilde{\cmv}$ denote the CMV matrices
of $\{\alpha_n\}_{n=0}^\infty$ and
$\{\widetilde{\alpha}_n\}_{n=0}^\infty$ respectively. Then
\begin{equation} \label{sparse-ac} \rm{Leb}\left(\Sigma_{\ac} \bigl( \cmv \bigr) \cap \Sigma_{\ac} \bigl( \widetilde{\cmv} \bigr) \right)=0 \end{equation}
where $\rm{Leb}(\cdot)$ denotes Lebesgue measure. In particular, if
$\cmv$ is associated with a sequence of Verblunsky coefficients
satisfying
\begin{equation} \label{mate-nevai-sparse}
\forall k \geq 1 \ \lim_{n \rightarrow \infty}\alpha_n \alpha_{n+k}=0, \quad
\quad \limsup_{n \rightarrow \infty} |\alpha_n|>0,
\end{equation}
then $\cmv$ has purely singular spectrum.
\end{theorem}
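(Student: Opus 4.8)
The plan is to argue by contradiction, manufacturing from the hypotheses a pair of whole-line right limits that share one half-line but disagree at the origin, and then using Remling's Theorem together with a boundary-uniqueness argument for Schur functions to rule this out. Suppose $A := \Sigma_{\ac}(\cmv) \cap \Sigma_{\ac}(\widetilde{\cmv})$ has positive Lebesgue measure. Passing to a subsequence of the $\{n_j\}$ furnished by $(iii)$, I may assume $|\Delta_{n_j}| \geq \delta > 0$ for all $j$. Since the Verblunsky coefficients lie in the compact disc $\overline{\D}$, a diagonal extraction over the shifts by $n_j$ yields, after passing to a further common subsequence, right limits $\cmv_r$ of $\cmv$ and $\widetilde{\cmv}_r$ of $\widetilde{\cmv}$ taken along the same sequence $\{n_j\}$, in the sense of Definition~\ref{rlimit def}. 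The key bookkeeping point is that shifting by $n_j$ carries the block $[m_j, n_j)$ onto $[m_j - n_j, 0)$; since $n_j - m_j \to \infty$, for each fixed negative index the two coefficients eventually lie in this block, where $(ii)$ forces $|\Delta_n| \to 0$, so $\cmv_r$ and $\widetilde{\cmv}_r$ have identical coefficients at every negative index, i.e. they agree on the entire left half-line. At the origin, however, $(iii)$ keeps their coefficients a distance $\geq \delta$ apart, so $\cmv_r \neq \widetilde{\cmv}_r$.

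Next I would invoke Remling's Theorem (Theorem~\ref{Remling}): every right limit of $\cmv$ is reflectionless on $\Sigma_{\ac}(\cmv)$, and likewise for $\widetilde{\cmv}$, so in particular both $\cmv_r$ and $\widetilde{\cmv}_r$ are reflectionless on the common set $A$ (Definition~\ref{reflectionless}). Reflectionlessness on $A$ expresses the right Schur function as a fixed boundary-value transform of the left Schur function; since $\cmv_r$ and $\widetilde{\cmv}_r$ share the same left half-line they have the same left Schur function, and hence their right Schur functions $f_+$ and $\widetilde{f}_+$ have identical non-tangential boundary values at a.e.\ point of $A$. Because $f_+$ and $\widetilde{f}_+$ are Schur functions, bounded and analytic on $\D$, and $A$ has positive measure, Privalov's boundary-uniqueness theorem forces $f_+ \equiv \widetilde{f}_+$ on $\D$, so $\cmv_r$ and $\widetilde{\cmv}_r$ also share the same right half-line. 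Together with the agreement on the left this gives $\cmv_r = \widetilde{\cmv}_r$, contradicting the disagreement at the origin and proving \eqref{sparse-ac}. I expect this last step to be the main obstacle, since correctly formulating the reflectionless relation between the two Schur functions, and confirming that the half-line coefficient data is genuinely recovered from $f_+$, requires setting up the CMV analogue of the $m$-function and Green's-function machinery rather than a direct estimate.

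Finally, for the ``in particular'' statement I would apply \eqref{sparse-ac} with $\widetilde{\alpha}_n \equiv 0$, so that $\widetilde{\cmv}$ is the free CMV matrix with $\Sigma_{\ac}(\widetilde{\cmv}) = \partial\D$ up to measure zero and $\Delta_n = -\alpha_n$. Here $(iii)$ holds by choosing $\{n_j\}$ along which $|\alpha_{n_j}|$ is bounded below, using $\limsup_n |\alpha_n| > 0$. For $(ii)$, the hypothesis $\lim_n \alpha_n \alpha_{n+k} = 0$ gives, for each fixed $k$, that $|\alpha_{n_j - k}| \to 0$ as $j \to \infty$ because $|\alpha_{n_j}|$ stays bounded below; a standard diagonalization then produces lengths $L_j \to \infty$ with $\max_{1 \leq k \leq L_j} |\alpha_{n_j - k}| \to 0$, and setting $m_j = n_j - L_j$ verifies $(ii)$. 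Then \eqref{sparse-ac} yields $\mathrm{Leb}\bigl(\Sigma_{\ac}(\cmv)\bigr) = 0$, that is $w = 0$ a.e., so the spectral measure of $\cmv$ is purely singular.
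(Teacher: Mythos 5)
Your proposal is correct and follows essentially the same route as the paper: extract a common subsequence of shifts to obtain right limits agreeing on the left half-line but differing at the origin, apply Remling's Theorem to make both reflectionless on the common a.c.\ support, and conclude via the Schur algorithm plus boundary uniqueness for Schur functions (the paper packages this last step as Lemma~\ref{deterministic}, using exactly the Schur-algorithm recovery of the coefficients from $f_+$ that you flag as the main obstacle). Your verification of conditions $(ii)$ and $(iii)$ for the ``in particular'' statement fills in a detail the paper leaves to the reader, and does so correctly.
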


In order to state Remling's Theorem we need some more terminology.

\begin{definition}\label{rlimit def}
Given a sequence of Verblunsky coefficients
$\{\alpha_n\}_{n=0}^\infty$, a doubly-infinite sequence of
parameters $\{ \widetilde{\alpha}_n \}_{n \in \Z}$ with
$|\widetilde{\alpha}_n| \leq 1$ is called a \emph{right limit} of
$\{\alpha_n\}_{n=0}^\infty$ if there is a sequence of integers $n_j \rightarrow
\infty$ such that $\forall n \in \Z$, $\widetilde{\alpha}_n=\lim_{j
\rightarrow \infty} \alpha_{n+n_j}$.
\end{definition}

Since a sequence of Verblunsky coefficients is always bounded, by
compactness it always has at least one (and perhaps many) right
limits. Given a doubly infinite sequence $\{ \widetilde{\alpha}_n
\}_{n \in \Z}$, one may also define a corresponding unitary matrix
on $\ell^2(\Z)$, extending the half-line matrices to the left and
top (see \eqref{whole-line-cmv} for the precise form). We call such
a matrix the corresponding whole-line CMV matrix and denote it by
$\EC$. For this reason, we shall often refer to a doubly infinite
sequence of numbers $\{ \widetilde{\alpha}_n \}_{n \in \Z}$ with
$|\widetilde{\alpha}_n| \leq 1$ as a (doubly infinite) sequence of
Verblunsky coefficients. If $\{ \widetilde{\alpha}_n \}_{n \in \Z}$
is a right limit of $\{\alpha_n\}_{n=0}^\infty$, we refer to the
corresponding whole-line CMV matrix as a right limit of the
half-line CMV matrix associated with $\{\alpha_n\}_{n=0}^\infty$.

Recall that any probability measure $\mu$ on $\partial \D$ may be
naturally associated with a Schur function $f$ (an analytic function
on $\D$ satisfying $\sup_{z \in \D}|f(z)| \leq 1$) and a
Carath\'eodory function $F$ (an analytic function on $\D$ satisfying
$F(0)=1$ and $\re F(z)>0$ on $\D$). This is given by
\begin{equation} \label{carat-schur} \frac{1+zf(z)}{1-zf(z)} =
F(z)=\int_0^{2\pi} \frac{e^{i\theta}+z}{e^{i\theta}-z}\,
d\mu(\theta).\end{equation} The correspondence is 1-1 and onto. By a
classical result, $\lim_{r \uparrow 1}F(r e^{i\theta})$ and $\lim_{r
\uparrow 1}f(r e^{i\theta})$ exist Lebesgue a.e.\ on $\partial \D$.
We denote them by $F(e^{i\theta})$ and $f(e^{i\theta})$ respectively
and, when there is no danger of confusion, simply by $F(z)$ or
$f(z)$ for $z \in \partial \D$.

Given a Schur function, $f$, let $f_0 = f$ and define a sequence of
Schur functions $f_n$ and parameters $\gamma_n \in \D$ by $$z
f_{n+1}(z) = \frac{f_n(z) - \gamma_n}{1 - \overline \gamma_n
f_n(z)}, \quad\quad\quad \gamma_n = f_n(0).$$ If for some $n$,
$|\gamma_n|=1$, we stop and the Schur function is a finite Blaschke
product. Otherwise, we continue. It is known \cite{OPUC1} that this
process (known as the Schur algorithm) sets up a 1-1 correspondence
between Schur functions $f$ and parameter sequences $\gamma_n \in
\D$, so given any sequence $\gamma_n \in \D$ there is a unique Schur
function $f(z;\gamma_0, \gamma_1,\dots)$ associated to it in the
above way. The $\gamma$'s are frequently termed the \emph{Schur
parameters} associated to $f$ (or equivalently, to $\mu$ or $F$).
Geronimus's Theorem \cite{Geronimus} says that $\gamma_n=\alpha_n$
(the Verblunsky coefficients of $\mu$ appearing above). Finally,
note that by definition
\begin{equation}\label{sa}
f_n(z; \alpha_0,\alpha_1,\dots) = f(z; \alpha_n,\alpha_{n+1},\dots).
\end{equation}

For a doubly infinite sequence of Verblunsky coefficients,
$\{\alpha_n\}_{n \in \Z}$ (some of which may lie on $\partial \D$),
we define two sequences of Schur functions:
\begin{equation}\label{f}
f_+(z,n) = f(z;\alpha_n , \alpha_{n+1}, \dots) \quad\text{and}\quad
f_-(z,n) = f(z; -\overline{\alpha_{n-1}} , -\overline{\alpha_{n-2}}
, \dots)
\end{equation}
where as usual, if one of the $\alpha$'s lies in $\partial \D$ then
we stop the Schur algorithm at that point and the corresponding
Schur function is a finite Blaschke product.

\begin{definition}\label{reflectionless}
Let $\{\alpha_n\}_{n \in \Z}$ be a doubly-infinite sequence of Verblunsky
coefficients and let $\EC$ be the associated whole-line CMV matrix. Given a
Borel set $A \subseteq
\partial \D$, we will say $\EC$ is \emph{reflectionless} on $A$ if for all
$n \in \Z$, $$zf_+(z,n) = \overline{f_-(z,n)}$$ for Lebesgue
almost every $z \in A$. By the Schur algorithm, one can easily see
that ``for all $n \in \Z$'' may be replaced with ``for some $n \in
\Z$.''
\end{definition}

\begin{remark}
The analogous definition for whole-line Jacobi matrices involves a
similar relationship between the left and right $m$-functions.
\end{remark}

The following is the CMV version of Remling's Theorem:

\begin{theorem}[Remling's Theorem for CMV matrices] \label{Remling}
Let $\cmv$ be a half-line CMV matrix, and let $\Sigma_{\ac}(\cmv)$
be the essential support of the absolutely continuous part of the
spectral measure. Then every right limit of $\cmv$ is reflectionless
on $\Sigma_{\ac}(\cmv)$.
\end{theorem}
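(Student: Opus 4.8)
The plan is to transcribe Remling's half-line argument into the language of Schur functions, using the Schur iterates in place of the Weyl $m$-functions. For the half-line matrix $\cmv$ with Verblunsky coefficients $\{\al_n\}_{n=0}^\infty$, write $f(z,n):=f(z;\al_n,\al_{n+1},\dots)$; by \eqref{sa} this is the Schur function of the spectral measure of the shifted matrix $\cmv^{(n)}$, and on the boundary the a.c.\ density of that measure is $w_n(\te)=(1-|f(e^{i\te},n)|^2)/(2\pi|1-e^{i\te}f(e^{i\te},n)|^2)$. Let $g(z,n):=f(z;-\ol{\al_{n-1}},\dots,-\ol{\al_0})$ be the (finite) left Schur function attached to the truncation at $0$. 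Fix a right limit $\EC$ realized along $n_j\to\infty$ as in Definition \ref{rlimit def}, with limiting coefficients $\{\wti\al_n\}_{n\in\Z}$. Since Schur functions are bounded by $1$ on $\D$, a normal-families argument together with the continuous dependence of the Schur algorithm on its parameters gives, for each fixed $n$, the locally uniform limits $f(z,n+n_j)\to f_+(z,n)$ and $g(z,n+n_j)\to f_-(z,n)$ on $\D$, where $f_\pm(z,n)$ are the Schur functions of $\EC$ defined in \eqref{f}: the first reads off the tail $\wti\al_n,\wti\al_{n+1},\dots$, the second the reversed segment $-\ol{\wti\al_{n-1}},-\ol{\wti\al_{n-2}},\dots$.

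The engine of the proof is a step-by-step sum rule on $A:=\Sigma_{\ac}(\cmv)$. The elementary boundary identity
\[
1-|f(e^{i\te},n+1)|^2=\frac{\rho_n^2\,(1-|f(e^{i\te},n)|^2)}{|1-\ol{\al_n}f(e^{i\te},n)|^2}
\]
expresses one Schur step, and Simon's relative \Szego function $\de_n D$ (in the Nevanlinna class), characterized up to normalization by $|\de_n D(e^{i\te})|^2=w_n(\te)/w_{n+1}(\te)$ a.e.\ and $|\de_n D(0)|=\rho_n$, packages it analytically. Applying Jensen's inequality to $\de_n D$ and telescoping in $n$ produces an identity of the form
\[
\int_A\log w_{n}\,\frac{d\te}{2\pi}=\int_A\log w_0\,\frac{d\te}{2\pi}+\sum_{k=0}^{n-1}\big(\text{explicit term in }\al_k\big)-\sum_{k=0}^{n-1}\cK_k,
\]
in which each $\cK_k\ge0$ is a reflection defect measuring the deviation of $\de_k D$ from an outer function on $A$, comparable, up to the weight $w_k>0$, to $\int_A|e^{i\te}f(e^{i\te},k)-\ol{g(e^{i\te},k)}|^2\,d\te$. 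The point is that $A$ is invariant under the shift $\cmv\mapsto\cmv^{(n)}$, so $w_n>0$ a.e.\ on $A$ for every $n$ and the left-hand side stays bounded below; together with the explicit terms this forces $\sum_k\cK_k<\infty$, whence $\cK_k\to0$.

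With the summability in hand the theorem follows by matching the two convergences. Along $n_j$ we have $\cK_{n+n_j}\to0$, while $e^{i\te}f(\cdot,n+n_j)-\ol{g(\cdot,n+n_j)}\to e^{i\te}f_+(\cdot,n)-\ol{f_-(\cdot,n)}$ by the first paragraph. Passing interior convergence to the boundary on $A$ is legitimate because the relevant functions are uniformly bounded (hence bounded in $H^\infty$) and $w>0$ on $A$ keeps the weight away from $0$; weak lower semicontinuity of the $L^2(A)$ norm then yields
\[
\int_A\big|e^{i\te}f_+(e^{i\te},n)-\ol{f_-(e^{i\te},n)}\big|^2\,\frac{d\te}{2\pi}\le\liminf_{j\to\infty}\cK_{n+n_j}=0,
\]
so $e^{i\te}f_+(e^{i\te},n)=\ol{f_-(e^{i\te},n)}$ for a.e.\ $e^{i\te}\in A$. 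This is precisely the reflectionless condition of Definition \ref{reflectionless}, and by the remark following that definition it suffices to verify it for a single $n$. Hence every right limit $\EC$ is reflectionless on $\Sigma_{\ac}(\cmv)$.

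The main obstacle is the second paragraph: producing the sum rule with a genuinely \emph{non-negative} reflection term $\cK_k$ and controlling its sign, which is the CMV counterpart of the technical heart of \cite{Remling}. This requires the relative \Szego function together with a careful Jensen/entropy estimate restricted to $A$, and it is here that the invariance of the essential support of the a.c.\ part under shifting must be used to prevent $\int_A\log w_n$ from escaping to $-\infty$. A secondary technical point is justifying the passage from locally uniform convergence on $\D$ to convergence of the boundary-value functionals on $A$, which I would handle by the $H^\infty$ bound on Schur functions and weak lower semicontinuity as indicated.
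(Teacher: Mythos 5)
Your first and third paragraphs are sound: the locally uniform convergence of the shifted Schur functions to the $f_\pm(\cdot,n)$ of the right limit is exactly how the paper begins, and passing from interior convergence to weak convergence of boundary values in $L^2(A)$ for uniformly bounded analytic functions is legitimate. The problem is the second paragraph, which is the entire engine and which you acknowledge you have not supplied. The sum rule you posit is not established, does not exist in the generality required, and is not ``the technical heart of [Remling]'': Remling's proof, and the paper's, is not a sum-rule argument at all. It rests on a Breimesser--Pearson theorem on convergence of value distributions (Theorem \ref{Pearson} here), proved by hyperbolic geometry --- the transfer matrices $T_-(z,\al)$ are strict contractions of the pseudohyperbolic metric for $z\in\D$ and automorphisms of $\D$ for $z\in\partial\D$ (Lemma \ref{mapping properties}), which forces the backward orbit to forget its initial condition. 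No entropy or \Szego-type integrability enters anywhere.

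Concretely, three steps in your second paragraph fail. First, telescoping the relative \Szego step-by-step identity produces the explicit terms $\sum_k\log\rho_k$, and this sum is $-\infty$ unless $\{\al_n\}\in\ell^2$; Remling's theorem assumes no \Szego condition, so these terms cannot be controlled. Second, $w_n>0$ a.e.\ on $A$ does not imply $\int_A\log w_n\,d\te>-\infty$, much less a lower bound uniform in $n$: positivity a.e.\ of the weight is strictly weaker than a local \Szego condition, so the left-hand side of your identity can escape to $-\infty$ and the conclusion $\sum_k\cK_k<\infty$ does not follow. Third, even granting $\cK_k\to0$, your $\cK_k$ carries the weight $w_k$, which is neither bounded below on $A$ nor independent of $k$, so $\cK_k\to0$ does not yield the unweighted conclusion $\int_A|e^{i\te}f_+-\ol{f_-}|^2\,d\te=0$; ``$w>0$ on $A$ keeps the weight away from $0$'' is not true pointwise. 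To repair the argument along the paper's lines, replace the sum rule by Theorem \ref{Pearson}: for all Borel $S\subseteq\partial\D$ and $A\subseteq\Sigma_{\ac}(\cmv)$ the harmonic-measure functionals $\int_A\omega_{f_+(e^{i\te},n)}(S)\,\f{d\te}{2\pi}$ and $\int_A\omega_{e^{i\te}f_-(e^{i\te},n)}(S^\ast)\,\f{d\te}{2\pi}$ become asymptotically equal, and Lemma \ref{convergence} together with Lebesgue differentiation converts this into the boundary identity for the right limit.
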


Remling's proof in the Jacobi case relies on previous work by
Breimesser and Pearson \cite{BP1, BP2} concerning convergence of
boundary values for Herglotz functions. We will prove
Theorem~\ref{Remling} using the analogous theory for Schur
functions. For a CMV matrix, $\cmv$, recall that its essential
spectrum,  $\sigma_{\rm{ess}}\left(\cmv \right)$, is its spectrum
with the isolated points removed. The following extension of a
celebrated theorem of Rakhmanov is a simple corollary of
Theorem~\ref{Remling}:

\begin{theorem} \label{Rakhmanov}
Assume $\Sigma_{\rm{ac}} \left(\cmv \right)=\sigma_{\rm{ess}}
\left(\cmv \right)=A$ where $\sigma_{\rm{ess}} \left(\cmv \right)$
is the essential spectrum of $\cmv$. Then for any right limit $\EC$
of $\cmv$, $\sigma(\EC) = A$ and  $\EC$ is reflectionless on $A$.
\end{theorem}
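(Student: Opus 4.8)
The plan is to derive Theorem~\ref{Rakhmanov} as a direct corollary of Theorem~\ref{Remling}, combined with general facts about right limits and essential spectrum. The hypothesis gives us $\Sigma_{\ac}(\cmv) = \sigma_{\mathrm{ess}}(\cmv) = A$. The reflectionless conclusion is immediate: Theorem~\ref{Remling} states that every right limit $\EC$ of $\cmv$ is reflectionless on $\Sigma_{\ac}(\cmv) = A$, so nothing further is needed for that half. The real content is the spectral equality $\sigma(\EC) = A$, and this is where I would concentrate the argument.

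First I would establish the inclusion $\sigma(\EC) \subseteq \sigma_{\mathrm{ess}}(\cmv) = A$. This is a standard approximation/strong-convergence fact about right limits: if $\EC$ is a right limit of $\cmv$ along $n_j \to \infty$, then translated compressions of $\cmv$ converge strongly (entrywise, and in the relevant operator sense on $\ell^2(\Z)$) to $\EC$. Since the Verblunsky coefficients $\alpha_{n+n_j}$ converge for each fixed $n$ and the CMV matrix is banded (5-diagonal) with uniformly bounded entries, an approximate-eigenvector argument shows that any $\lambda \in \sigma(\EC)$ is an approximate eigenvalue of $\cmv$ built from finitely-supported vectors pushed out to infinity; such $\lambda$ cannot be isolated in $\sigma(\cmv)$, hence $\lambda \in \sigma_{\mathrm{ess}}(\cmv) = A$. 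I would cite the banded structure and uniform boundedness to justify that the spectra behave well under this limiting procedure.

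For the reverse inclusion $A \subseteq \sigma(\EC)$, I would use the reflectionless property together with the characterization of the absolutely continuous spectrum via the Schur/Carath\'eodory functions. On $A = \Sigma_{\ac}(\cmv)$ the measure $\mu$ has positive a.c.\ weight a.e., which forces the boundary values of the relevant Schur functions to have modulus strictly less than $1$ on a full-measure subset of $A$. Since $\EC$ is reflectionless on $A$, the relation $zf_+(z,n) = \overline{f_-(z,n)}$ holds a.e.\ on $A$, and this identity propagates positivity of the real part of the associated Carath\'eodory function (equivalently, an a.c.\ spectral weight for $\EC$) across $A$. Thus the a.c.\ spectrum of $\EC$ contains $A$ up to sets of measure zero, and since the spectrum is closed and $A$ is the (closed) essential support, we get $\ol{A} = A \subseteq \sigma(\EC)$.

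The main obstacle will be the second inclusion: making rigorous the step from ``$\EC$ is reflectionless on $A$ and $A$ carries a.c.\ spectrum of $\cmv$'' to ``$A \subseteq \sigma(\EC)$.'' The subtlety is that the a.c.\ weight lives on the half-line matrix $\cmv$, and one must transfer this information to the whole-line object $\EC$ through the reflectionless relation; the bookkeeping involves the two-sided Schur functions $f_\pm$ from \eqref{f} and controlling their boundary behavior on $A$. The first inclusion, by contrast, is a softer compactness/strong-convergence argument that I expect to be routine given the banded structure of CMV matrices. Together the two inclusions yield $\sigma(\EC) = A$, completing the proof.
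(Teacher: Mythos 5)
Your proposal follows the same route as the paper: the reflectionless statement is read off directly from Theorem~\ref{Remling}, the inclusion $\sigma(\EC)\subseteq\sigma_{\mathrm{ess}}(\cmv)=A$ is obtained by the standard approximate-eigenvector argument for right limits (the paper cites \cite{Last-Simon}), and the reverse inclusion comes from observing that the reflectionless relation forces positivity of the real part of the associated Carath\'eodory function, hence $A\subseteq\Sigma_{\ac}(\EC)\subseteq\sigma(\EC)$ (the paper packages this via the spectral measure of the vector $\psi=\sum_n 2^{-|n|}\delta_n$). The argument is correct and essentially identical to the paper's.
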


\begin{remark}
In the case that $A$ is a finite union of intervals, the
corresponding class of whole-line CMV matrices is called the
\emph{isospectral torus} of $A$, since it has a natural torus
structure \cite{GZ06b,PR}. If $A=\partial \D$, the isospectral torus
is known to consist of a single point---the CMV matrix with
Verblunsky coefficients all equal to zero \cite{GZ06b}. Thus, one
gets Rakhmanov's Theorem \cite{Rakhmanov1,Rakhmanov2} as a
corollary.
\end{remark}

\begin{proof}[Proof of Theorem \ref{Rakhmanov}]
Let $\EC$ be a right limit of $\cmv$ and $\{\delta_n\}_{n \in \Z}$
be the standard orthonormal bais for $\ell^2(\Z)$. For $\psi=\sum_{n
\in \Z} 2^{-|n|} \delta_n$, let
$d\mu_\psi(\theta)=w_{\psi}(\theta)d\theta+d\mu_{\psi,\textrm{sing}}$
be the spectral measure of $\psi$ and $\EC$. Let $\Sigma_{\ac}\left(
\EC \right)=\{\theta \mid w_\psi(\theta)>0 \}$ (defined, again, up
to sets of Lebesgue measure zero).

By Theorem~\ref{Remling}, $A \subseteq \Sigma_{\ac}\left( \EC
\right)$ (up to a set of Lebesgue measure zero), since the
reflectionless condition implies positivity of the real part of the
Carath\'eodory function associated with $d\mu_\psi$. Also,
$\sigma\left( \EC \right) \subseteq \sigma_{\textrm{ess}}\left( \cmv
\right)=A$ by approximate-eigenvector arguments (see for instance
\cite{Last-Simon}). Since obviously $\Sigma_{\ac}\left( \EC \right)
\subseteq \sigma(\EC)$,  we have equality throughout. The
reflectionless condition now follows from Theorem \ref{Remling}.
\end{proof}

\begin{remark}
Using Theorem \ref{Remling} and a bit of work, one can also derive
parts of Kotani theory for ergodic CMV matrices (see for instance
\cite[Sect.\! 10.11]{OPUC2}).  Remling also obtains deterministic versions of these results for Jacobi matrices (see \cite[Thm's 1.1 and 1.2]{Remling}).  His proofs extend directly to the CMV case we are considering, so we will not pursue this here.
\end{remark}

Corresponding to the notion of reflectionless operators, there is
also the notion of reflectionless \emph{measures}:

\begin{definition} \label{measure-reflectionless}
A probability measure $\mu$ on $\partial \D$ is said to be
\emph{reflectionless} on a Borel set $A \subseteq \partial \D$ if
the corresponding Carath\'eodory function $F$ has $\im
F(e^{i\theta})=0$ for Lebesgue a.e.\ $e^{i\theta}\in A$.
\end{definition}

\begin{remark}
The analogous definition for measures on the real line involves the
vanishing of the real part of the Borel (a.k.a.\ Cauchy or
Stieltjes) transform of $\mu$ (see for instance \cite{Teschl}).
\end{remark}
\begin{remark}
There is also a natural dynamical notion for when an operator is
reflectionless. For the relationship between this and spectral
theory see \cite{BRS}.
\end{remark}

Reflectionless Jacobi matrices and reflectionless measures on $\R$
are related in the following way: given a whole-line Jacobi matrix,
$H$, let $\mu_n$ be the spectral measure of $H$ and $\delta_n$
($\delta_n \in \ell^2(\Z)$ is defined by $\delta_n(j)=\delta_{n,j}$
with $\delta_{n,j}$ the Kronecker delta). Then $H$ is reflectionless
on $A \subseteq \R$ if and only if $\mu_n$ are reflectionless on $A$
for all $n \in \Z$ (again, see \cite{Teschl}). A fact we would like
to emphasize in this paper is that the analogous statement
\emph{does not} hold for CMV matrices.

\begin{example} \label{example}
Fix $j_0 \in \Z$ and some $0 < |\beta| <1$, and let $\{\alpha_n\}_{n \in \Z}$
be the sequence of Verblunsky coefficients defined by
$$
\alpha_n = \begin{cases}
\beta &  n=j_0\\
0 & \textrm{otherwise.}
\end{cases}
$$
Let $\EC$ be the CMV matrix for these $\alpha$'s.  From the Schur algorithm we see
\begin{equation}\label{sfs}
f(z;0,0,\dots) = 0 \quad\text{and}\quad f(z;\beta,0,0,\dots) = \beta,
\end{equation}
so $\EC$ is \emph{not} reflectionless anywhere.

On the other hand, let $\mu_n$ be the spectral measure of $\EC$ and
$\delta_n$, and let $f(z,n)$ its corresponding Schur function.  It
is shown in \cite{GZ06a} (see also \cite{Khrushchev0} for the
analogous formula in the half-line case) that
\begin{equation} \lb{diag-schur}
f(z,n)=f_+(z,n)f_-(z,n), \quad z\in\D,\; n\in\Z.
\end{equation}
Thus, for any $n \in \Z$, \eqref{sfs} implies $d\mu_n(\theta)=\frac{d \theta}{2
\pi}$.  In particular, $\mu_n$ is reflectionless on all of $\partial \D$ while $\EC$ is
not reflectionless on any subset of positive Lebesgue measure.
\end{example}

We will show, however, that this is the only example of such
behavior:

\begin{theorem} \label{equivalence}
Let $\EC$ be the whole-line CMV matrix corresponding to the sequence
$\{\alpha_n \}_{n \in \Z}$,  satisfying $\alpha_n \neq 0$ for at least two
different $n \in \Z$. Then $\EC$ is reflectionless on $A \subseteq
\partial \D$ if and only if $\mu_n$ is reflectionless on $A$ for all
$n$.
\end{theorem}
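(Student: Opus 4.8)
The plan is to rewrite both reflectionless conditions as statements about the boundary values of the Schur functions $f_\pm(z,n)$ and to exploit the factorization $f(z,n)=f_+(z,n)f_-(z,n)$ from \eqref{diag-schur}. Using \eqref{carat-schur}, a short computation gives, for $|z|=1$,
\[
\re F_n(z)=\frac{1-|f_+(z,n)|^2|f_-(z,n)|^2}{|1-zf_+(z,n)f_-(z,n)|^2},\qquad \im F_n(z)=\frac{2\,\im\!\big(zf_+(z,n)f_-(z,n)\big)}{|1-zf_+(z,n)f_-(z,n)|^2}.
\]
Hence, by Definitions \ref{reflectionless} and \ref{measure-reflectionless}, $\mu_n$ is reflectionless on $A$ exactly when $w_n(z):=zf_+(z,n)f_-(z,n)$ is real a.e.\ on $A$, while $\EC$ is reflectionless on $A$ exactly when $X_n(z):=f_+(z,n)-\overline{zf_-(z,n)}$ vanishes a.e.\ on $A$ (for one, equivalently all, $n$). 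The forward implication is then immediate and uses no hypothesis: if $X_n\equiv 0$ then $zf_+=\overline{f_-}$ gives $w_n=\overline{f_-}\,f_-=|f_-|^2\in\R$, so every $\mu_n$ is reflectionless.

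For the converse I would assume $w_n\in\R$ a.e.\ on $A$ for all $n$ and prove $X_n\equiv 0$. First I record two identities valid wherever $w_n\in\R$, obtained by expanding $X_n$ and using $w_n=\overline{w_n}$: $zf_-(z,n)X_n=w_n-|f_-(z,n)|^2\in\R$ and $\overline{f_+(z,n)}\,X_n=|f_+(z,n)|^2-w_n\in\R$. Next, substituting the one-step Schur recursions for $f_\pm(\cdot,n)$ yields the M\"obius recursion
\[
w_{n+1}=\frac{w_n-S_n+|\alpha_n|^2}{1-S_n+|\alpha_n|^2 w_n},\qquad S_n=\overline{\alpha_n}f_+(z,n)+\alpha_n zf_-(z,n),
\]
and since $w_n,w_{n+1},|\alpha_n|^2$ are real one solves for $S_n$ to get $S_n\in\R$ (off the null set where $w_{n+1}=1$). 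Comparing $S_n$ with $\overline{S_n}$ gives $\overline{\alpha_n}X_n=\alpha_n\overline{X_n}$, i.e.\ $X_n$ is a real multiple of $\alpha_n$ at every site with $\alpha_n\neq 0$. Combined with the two reality identities, this produces the key pointwise facts at any nonzero site where $X_n\neq 0$: $\overline{\alpha_n}f_+(z,n)\in\R$ and $\alpha_n zf_-(z,n)\in\R$, so that $f_+(z,n)=p\,\alpha_n$ and $zf_-(z,n)=q\,\overline{\alpha_n}$ with $p,q\in\R$ and $X_n=(p-q)\alpha_n$.

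The last ingredient is a multiplicative recursion for $X_n$ itself. Inverting the Schur steps and simplifying, one finds $X_n=c_n X_{n+1}$ with $c_n=z(1-|\alpha_n|^2)\big/\big[(1+\overline{\alpha_n}zf_+(z,n+1))(1+\overline{\alpha_n}\overline{f_-(z,n+1)})\big]$; the numerator is nonzero and each denominator factor has modulus $\geq 1-|\alpha_n|>0$, so $c_n\neq 0$ and the set $A'=\{X_n\neq 0\}$ is, up to null sets, independent of $n$. Suppose for contradiction $|A'|>0$. By hypothesis there are sites $n_1<n_2$ with $\alpha_{n_1},\alpha_{n_2}\neq 0$; between them $\alpha_k=0$ forces the trivial steps $f_+(z,k+1)=z^{-1}f_+(z,k)$ and $f_-(z,k+1)=zf_-(z,k)$, so $f_+(z,n_2)=z^{-(n_2-n_1)}\frac{f_+(z,n_1)-\alpha_{n_1}}{1-\overline{\alpha_{n_1}}f_+(z,n_1)}$ (and similarly for $f_-$). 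Feeding in $f_+(z,n_1)=p\,\alpha_{n_1}$ and $\overline{\alpha_{n_2}}f_+(z,n_2)\in\R$ collapses all phases into the single constraint $z^{n_2-n_1}\in\R\cdot(\alpha_{n_1}/\alpha_{n_2})$ for a.e.\ $z\in A'$. Since the right-hand side is a fixed line through the origin, $z^{n_2-n_1}$, hence $z$, is confined to a finite set, contradicting $|A'|>0$. Thus $X_n\equiv 0$ and $\EC$ is reflectionless on $A$.

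I expect the main obstacle to be this final contradiction, specifically the degenerate cases where $f_+(z,n_2)$ or $f_-(z,n_2)$ vanishes on part of $A'$: the $f_+$-propagation is vacuous exactly when $p=1$ and the $f_-$-propagation exactly when $q=1$, and since $X_{n_1}\neq 0$ forces $p\neq q$ these cannot occur simultaneously, so at least one propagation yields a non-vacuous constraint. Carrying out this case split cleanly, and verifying the a.e.\ validity of all the boundary-value manipulations (existence of radial limits, nonvanishing of denominators, and disregarding the null set where $|f_\pm|=1$), is the delicate part of the argument.
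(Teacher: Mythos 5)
Your proof is correct, but it follows a genuinely different route from the paper's. The forward direction is the same (and trivial) in both. For the converse, the paper first proves Lemma~\ref{many-al} (a reflectionless diagonal measure is incompatible with having finitely many, but at least two, nonzero $\al$'s -- via a $\log$-derivative/Cauchy--Riemann argument), upgrades the hypothesis to \emph{infinitely many} nonzero coefficients, selects \emph{four} consecutive nonzero ones $\al_{n_{-1}},\al_{n_0},\al_{n_1},\al_{n_2}$, writes $zf_+(z,n_1)=s_+(z)e^{it(z)}$, $\ol{f_-(z,n_1)}=s_-(z)e^{it(z)}$ with $s_\pm$ real, and forces $s_+=s_-$ by propagating to the two flanking nonzero sites and invoking the injectivity of the argument function \eqref{arg-fn}. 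You instead work directly with the defect $X_n=f_+-\ol{zf_-}$ and the quantity $S_n=\ol{\al_n}f_+ + \al_n zf_-$: the M\"obius recursion for $w_n=zf_+f_-$ forces $S_n\in\R$ (off the null set $\{w_{n+1}=1\}$, which is null by the uniqueness theorem for bounded analytic functions since $zf(z,n+1)$ vanishes at the origin), whence $X_n\in\R\al_n$ and then $f_+(z,n)\in\R\al_n$, $zf_-(z,n)\in\R\ol{\al_n}$ wherever $\al_n\neq0$ and $X_n\neq0$; the multiplicative identity $X_n=c_nX_{n+1}$ with $c_n\neq0,\infty$ makes the zero set of the defect site-independent, so only the \emph{two} nonzero coefficients guaranteed by the hypothesis are needed. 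I have checked the algebra (the formula for $w_{n+1}$, the identity $S_n-\ol{S_n}=\ol{\al_n}X_n-\al_n\ol{X_n}$, the factor $c_n$ with $|1+\ol{\al_n}u|\geq 1-|\al_n|$, and the $p\neq q$ dichotomy that rules out both propagations being vacuous) and it is sound. Both arguments close with the same punchline -- a fixed power of $z$ confined to a line through the origin is a finite set -- but yours is more economical: it bypasses Lemma~\ref{many-al} entirely and needs no more nonzero coefficients than the theorem's hypothesis supplies, at the cost of losing that lemma's independent interest. The remaining housekeeping you flag (a.e.\ existence of radial limits, nonvanishing denominators) is routine and matches what the paper also leaves implicit.
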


The connection between the above result and Khrushchev's theory of weak
limits comes from the fact that, together with Example
\ref{example}, Theorem~\ref{sparse} provides for a particularly
simple proof of the following theorem of Khrushchev.

\begin{theorem}[Khrushchev \cite{Khrushchev0}]\label{Khrushchev}
Let $\cmv$ be a CMV matrix with Verblunsky coefficients
$\{\alpha_n\}_{n=0}^\infty$ and measure $\mu$, and let
$d\mu_n(\theta)=|\varphi_{n}(e^{i\theta})|^2d\mu(\te)$. Then
$$d\mu_n(\theta) \rightarrow \frac{d\theta}{2 \pi}$$ weakly if and only if
$$\forall k \geq 1 \ \lim_{n \rightarrow \infty}\alpha_n
\alpha_{n+k}=0.$$ Furthermore, these conditions imply
that either $\alpha_n \rightarrow 0$ or $\mu$ is purely singular.
\end{theorem}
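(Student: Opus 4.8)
The plan is to translate everything into the language of right limits, so that the elementary computation behind Example~\ref{example} does all the work. The first step is a purely combinatorial reformulation of the hypothesis: I claim that $\lim_{n\to\infty}\alpha_n\alpha_{n+k}=0$ for all $k\geq 1$ holds if and only if every right limit $\{\widetilde\alpha_m\}_{m\in\Z}$ of $\{\alpha_n\}$ has at most one nonzero entry. Indeed, if some right limit (along $n_j$) had nonzero entries at positions $a<b$, then taking $k=b-a$ and $n=a+n_j$ gives $\alpha_n\alpha_{n+k}\to\widetilde\alpha_a\widetilde\alpha_b\neq 0$; conversely, if $\limsup_n|\alpha_n\alpha_{n+k}|>0$ for some $k\geq 1$, then passing to a subsequence along which $\alpha_{n_j}$ and $\alpha_{n_j+k}$ converge to nonzero limits and extracting a full right limit by compactness produces a right limit with two nonzero entries. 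Both implications are routine diagonal/compactness arguments.

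The heart of the matter is a bridge lemma identifying the weak limit points of the Khrushchev measures $d\mu_n=|\varphi_n|^2\,d\mu$ with diagonal spectral measures of right limits. Here I would use the half-line analog of \eqref{diag-schur} due to Khrushchev \cite{Khrushchev0}: the Schur function of $d\mu_n$ is $f_{[n]}(z)=b_n(z)\,f(z;\alpha_n,\alpha_{n+1},\dots)$, where $b_n(z)=f(z;-\overline{\alpha_{n-1}},\dots,-\overline{\alpha_0})$ is the finite Blaschke product of degree $n$ obtained by running the Schur algorithm on the reflected, reversed block $\alpha_{n-1},\dots,\alpha_0$. Now fix a subsequence $n_j\to\infty$ realizing a right limit $\{\widetilde\alpha_m\}$, and let $\EC$ be the associated whole-line CMV matrix with diagonal measure $\mu_0$ (the spectral measure of $\delta_0$). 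By continuity of the Schur algorithm in its parameters, $f(z;\alpha_{n_j},\alpha_{n_j+1},\dots)\to f_+(z,0)$ locally uniformly on $\D$, while $b_{n_j}(z)\to f(z;-\overline{\widetilde\alpha_{-1}},-\overline{\widetilde\alpha_{-2}},\dots)=f_-(z,0)$; hence $f_{[n_j]}\to f_+(z,0)f_-(z,0)=f(z,0)$ by \eqref{diag-schur}, so the corresponding Carath\'eodory functions and all moments converge and $d\mu_{n_j}\to d\mu_0$ weakly. Conversely, every weak limit point of $\{\mu_n\}$ arises this way. The one delicate point, which I expect to be the main obstacle, is justifying that the finite Blaschke products $b_{n_j}$ converge to the infinite Schur function $f_-(z,0)$ as the block length $n_j\to\infty$ while only finitely many leading parameters are controlled; this is the stability of coefficient stripping under simultaneous convergence of finitely many parameters and growth of the total length, and is where the finite-versus-infinite bookkeeping must be handled with care.

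With the bridge lemma in hand both directions are immediate. If $\lim_n\alpha_n\alpha_{n+k}=0$ for all $k\geq 1$, then by the first step every right limit has at most one nonzero coefficient, so by Example~\ref{example} (together with the trivial all-zero case) its diagonal measure $\mu_0$ equals $\tfrac{d\theta}{2\pi}$; thus every weak limit point of $\{\mu_n\}$ is $\tfrac{d\theta}{2\pi}$, and weak-$*$ compactness of the probability measures on $\partial\D$ forces $d\mu_n\to\tfrac{d\theta}{2\pi}$. For the converse I argue by contraposition: if the condition fails, the first step gives a right limit with nonzero entries at positions $a<b$, and after shifting the defining subsequence by $b$ (working along $n_j+b$) I obtain a right limit $\{\beta_m\}$ with $\beta_0=\widetilde\alpha_b\neq 0$ and $\beta_{a-b}=\widetilde\alpha_a\neq 0$. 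Since $a<b$, there is a nonzero entry at a negative position and one at position $0$, so neither $f_+(z,0)$ nor $f_-(z,0)$ is identically zero; hence $f(z,0)=f_+(z,0)f_-(z,0)\not\equiv 0$ and $\mu_0\neq\tfrac{d\theta}{2\pi}$. The bridge lemma then gives $d\mu_{n_j+b}\to\mu_0\neq\tfrac{d\theta}{2\pi}$, contradicting $d\mu_n\to\tfrac{d\theta}{2\pi}$.

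Finally, the last assertion follows directly from Theorem~\ref{sparse}: if $\lim_n\alpha_n\alpha_{n+k}=0$ for all $k\geq 1$ and $\alpha_n\not\to 0$, then $\limsup_n|\alpha_n|>0$, so \eqref{mate-nevai-sparse} holds and $\cmv$ has purely singular spectrum, i.e.\ $\mu$ is purely singular; otherwise $\alpha_n\to 0$.
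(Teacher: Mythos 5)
Your proof is correct and follows essentially the same route as the paper's: reduce to right limits, observe that the product condition is equivalent to every right limit having at most one nonzero Verblunsky coefficient, invoke Example~\ref{example} and \eqref{diag-schur} to identify the diagonal spectral measures, and deduce the final claim from Theorem~\ref{sparse}. The only difference is that you make the bridge between weak limit points of $\mu_n$ and diagonal measures of right limits explicit via Khrushchev's half-line Schur-function formula, whereas the paper relies on the moment-based discussion preceding Proposition~\ref{khrushchev class proposition}; the finite-versus-infinite Blaschke product convergence you flag as delicate is just the standard locally uniform continuity of the Schur algorithm in its parameters (the tail's influence on $f(z)$ for $|z|\leq r$ is $O(r^{K})$ when the first $K$ parameters agree) and poses no real obstacle.
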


This theorem is naturally a part of a larger discussion dealing with
weak limits of $\mu_n$. In particular, Khrushchev's theory deals
with the cases in which such weak limits exist. We will show that
the analysis of these cases becomes simple when performed using
right limits. The reason for this is that the $\mu_n$ above are
actually the spectral measures of $\cmv$ and $\delta_n$ and, along a
proper subsequence, these converge weakly to the corresponding
spectral measures of the right limit. Thus, if $\mu_n$ converges
weakly to $\nu$ as $n \rightarrow \infty$, all of the diagonal
measures of any right limit are $\nu$. This leads naturally to

\begin{definition} \label{Khrushchev Class}
We say that a whole-line CMV matrix, $\EC$, belongs to
\emph{Khrushchev Class} if $\mu_n=\mu_m$ for all $n,\ m \in \Z$,
where $\mu_j$ is the spectral measure of $\EC$ and $\delta_j$.
\end{definition}

By the discussion above,

\begin{proposition}\label{khrushchev class proposition}
If $\cmv$ is a CMV matrix such that the sequence $\mu_n$ has a weak limit as $n
\rightarrow \infty$ then all right limits of $\cmv$ belong to Khrushchev Class.
\end{proposition}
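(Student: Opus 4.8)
The plan is to isolate a single weak-convergence statement for the diagonal spectral measures and then read off the conclusion from the hypothesis. Fix a right limit $\EC$ of $\cmv$, arising from integers $n_j\to\infty$ with $\widetilde\alpha_n=\lim_{j\to\infty}\alpha_{n+n_j}$ for all $n\in\Z$; write $\mu_m$ for the spectral measure of $\cmv$ and $\delta_m$ (these are the $\mu_n$ of the statement) and $\widetilde\mu_k$ for the spectral measure of $\EC$ and $\delta_k$. The key assertion I would establish is that for each fixed $k\in\Z$ the measures $\mu_{k+n_j}$ converge weakly to $\widetilde\mu_k$ as $j\to\infty$. Granting this, the proposition is immediate: by hypothesis $\mu_m\to\nu$ weakly for some probability measure $\nu$, and since $k+n_j\to\infty$ the subsequence $\mu_{k+n_j}$ converges weakly to the same $\nu$; uniqueness of weak limits forces $\widetilde\mu_k=\nu$ for every $k$. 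In particular all the $\widetilde\mu_k$ coincide, so $\EC$ lies in Khrushchev Class.

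To prove the weak convergence I would work with moments. For a unitary $U$ and a unit vector $\psi$, the associated spectral measure $\mu_\psi$ satisfies
\begin{equation*}
\int_{\dD} z^{p}\,d\mu_\psi(z)=\langle\psi,U^{p}\psi\rangle,\qquad p\in\Z,
\end{equation*}
where for $p<0$ one uses $U^{-1}=U^{*}$. Since the Laurent polynomials are dense in $C(\dD)$ and every measure in sight is a probability measure on the compact set $\dD$, weak convergence is equivalent to convergence of all these moments. Hence it suffices to show, for each fixed $k$ and each $p$,
\begin{equation*}
\langle\delta_{k+n_j},\cmv^{p}\delta_{k+n_j}\rangle\longrightarrow\langle\delta_k,\EC^{p}\delta_k\rangle .
\end{equation*}
Because $\cmv$ is $5$-diagonal, $\cmv^{p}$ has bandwidth at most $2|p|$, so the left-hand matrix element is a finite sum over lattice paths of length $|p|$ that start and end at $k+n_j$ and move at most two sites at a time; each term is a fixed polynomial in the entries of $\cmv$ at positions within distance $2|p|$ of $k+n_j$. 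Those entries are continuous functions of the Verblunsky coefficients $\alpha_l$ with $|l-(k+n_j)|\le 2|p|$ (through $\alpha_l$ and $\rho_l=(1-|\alpha_l|^2)^{1/2}$), together with the parities of the positions involved. The convergence $\alpha_{l+n_j}\to\widetilde\alpha_l$ then drives each such entry of $\cmv$ near $k+n_j$ to the corresponding entry of $\EC$ near $k$, and the finite sum converges to $\langle\delta_k,\EC^{p}\delta_k\rangle$.

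The one point that genuinely requires care---and the step I expect to be the main obstacle---is the bookkeeping of the even/odd block structure of the $LM$ factorization of a CMV matrix, since by \eqref{CMV-matrix} the entries of $\cmv$ depend on the parity of their row and column. For the window of $\cmv$ around $k+n_j$ to converge entrywise to the window of $\EC$ around $k$, the shift $n_j$ must preserve this parity, i.e.\ one works along \emph{even} shifts, as is standard for CMV right limits. I would therefore first pass to a subsequence of $\{n_j\}$ of fixed (even) parity; this leaves the Verblunsky limits $\widetilde\alpha_n$ unchanged, hence describes the same right limit $\EC$, and it is precisely the passage to the ``proper subsequence'' referred to in the discussion preceding Definition~\ref{Khrushchev Class}. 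With the parities matched the argument above goes through verbatim, establishing the weak convergence and, with it, the proposition; the opposite parity is entirely analogous, with the alternate ($ML$) factorization playing the role of $LM$.
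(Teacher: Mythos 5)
Your proposal is correct and follows essentially the same route as the paper, which simply asserts (in the discussion preceding Definition~\ref{Khrushchev Class}) that along the defining subsequence the measures $\mu_{k+n_j}$ converge weakly to the diagonal measures of the right limit and then invokes uniqueness of weak limits. You merely supply the standard justification the paper leaves implicit---moment convergence via the banded structure of powers of $\cmv$, together with the (correct and genuinely necessary) parity bookkeeping for the $\cL\cM$ factorization.
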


Thus, Khrushchev theory reduces to the analysis of Khrushchev Class.
Since Simon analyzed the analogous Jacobi case \cite{Simon-Weak}, we
feel the following is fitting:

\begin{definition} \label{Simon Class}
We say that a whole-line Jacobi matrix, $H$, belongs to \emph{Simon
Class} if $\mu_n=\mu_m$ for all $n,\ m \in \Z$, where $\mu_j$ is the
spectral measure of $H$ and $\delta_j$.
\end{definition}

The final section of this paper will be devoted to the analysis of
these two classes. In particular, we rederive all of the main
results of \cite{Simon-Weak} and even extend some of those of
\cite{Khrushchev}. We conclude with an amusing (and easy) fact:

\begin{proposition} \label{Simon and Khrushchev Class spectrum}
Any $H$ in the Simon Class is either periodic, and so reflectionless
on its spectrum, or decomposes into a direct sum of finite $($in
fact $2\times2$ matrices$)$, and so has pure point spectrum of
infinite multiplicity.

Similarly, any $\EC$ in the Khrushchev Class that does not belong to
the class introduced in Example \ref{example} is either
reflectionless on its spectrum or has pure point spectrum.
\end{proposition}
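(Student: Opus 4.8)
The plan is to exploit the rigidity forced by the requirement that all diagonal spectral measures coincide, reducing each assertion to a dichotomy: a \emph{nondegenerate} regime that is periodic and hence reflectionless, and a \emph{degenerate} regime that decouples into finite blocks and hence has pure point spectrum. I would treat the CMV statement in full, the Jacobi statement being entirely parallel and essentially the content of \cite{Simon-Weak}.

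For the CMV case I would first translate the hypothesis. By the diagonal Schur formula \eqref{diag-schur}, $\EC$ lies in the Khrushchev Class precisely when $f(z,n)=f_+(z,n)f_-(z,n)=:P(z)$ is independent of $n$; write $\mu$ for the common diagonal measure, so that $\sigma(\EC)=\operatorname{supp}\mu$. Since $\EC$ is not of the form in Example \ref{example}, Theorem \ref{equivalence} shows $\EC$ is reflectionless on a set $A$ if and only if $\mu$ is, and a short computation with $F=(1+zP)/(1-zP)$ shows the latter is equivalent to $zP(e^{i\theta})\in\R$ for a.e.\ $\theta\in A$. Thus ``reflectionless on its spectrum'' becomes the assertion that $zP$ is real a.e.\ on $\operatorname{supp}\mu$.

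The key step is an algebraic identity. Equating $f_+(z,n+1)f_-(z,n+1)=f_+(z,n)f_-(z,n)$ and eliminating $f_\pm(z,n+1)$ through the Schur recursions, one finds after dividing by the nonvanishing factor $1-zP$ that, for every $n$ with $\alpha_n\neq0$,
\begin{equation*}
f_+(z,n+1)=-\frac{\alpha_n}{\overline{\alpha_n}}\,f_-(z,n).
\end{equation*}
Evaluating at $z=0$ gives $\alpha_{n+1}\overline{\alpha_n}=\alpha_n\overline{\alpha_{n-1}}$, so the product $\alpha_{n+1}\overline{\alpha_n}=:s$ is independent of $n$ (with $s=0$ as soon as any $\alpha_n$ vanishes). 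If $s\neq0$ then all $\alpha_n\neq0$ and $\alpha_{n+2}=(s/\overline{s})\,\alpha_n$, so $\{\alpha_n\}$ is periodic after the rotation $\alpha_n\mapsto e^{-in\arg s}\alpha_n$; if $s=0$ then no two consecutive $\alpha_n$ are nonzero, and substituting this back into the identity again pins the coefficients down to be periodic up to a rotation, the only finitely-supported exceptions being the sequences of Example \ref{example}, which are excluded. Consequently, if all $|\alpha_n|<1$ then $\EC$ is a rotation of a periodic whole-line CMV matrix, hence reflectionless on its (banded) spectrum; this is the first alternative.

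It remains to treat the possibility that $|\alpha_{n_0}|=1$ for some $n_0$, where the Schur algorithm terminates and the CMV matrix decouples at $n_0$. Here I would argue that the equal-diagonal-measure condition forces infinitely many, uniformly spaced, such unimodular coefficients, so that $\EC$ is an orthogonal sum of congruent finite CMV blocks; each block has a rational (finite Blaschke product) Schur function, whence $\mu$ is a finite point measure and $\EC$ has pure point spectrum. The Jacobi case runs identically: the Simon Class condition is $n$-independence of the diagonal Green's function, coefficient stripping yields the analogous two-term identity, all $a_n>0$ forces periodicity (reflectionless on its bands), while a vanishing $a_{n_0}$ decouples $H$ into finite Jacobi blocks, the all-equal condition admitting only $2\times2$ blocks with equal diagonal entries (larger blocks cannot have all site measures equal), giving eigenvalues $b\pm a$ and pure point spectrum of infinite multiplicity. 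The main obstacle is the nondegenerate case: upgrading the two-term constraint $\alpha_{n+1}\overline{\alpha_n}\equiv s$ to genuine (rotated) periodicity, and thereby ruling out aperiodic reflectionless operators, which is where the full force of ``all diagonal measures equal''—the analog of freezing the Dirichlet divisor in finite-gap theory—must be brought to bear.
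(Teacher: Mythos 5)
Your route is genuinely different from the paper's. The paper deduces the Proposition in a few lines from the structure theorems proved earlier in the same section (Theorems~\ref{Simon Class Characterization} and \ref{Khrushchev Class Characterization}), whose condition $(iii)$ already exhibits every member of either class as periodic up to a phase rotation or as a direct sum of congruent finite blocks; nothing remains but to quote that periodic operators are reflectionless on their spectra. You instead work directly from the constancy of the diagonal Schur function, and your central identity is correct: writing $u=f_+(z,n+1)$, $v=f_-(z,n)$ and clearing denominators in $f_+(z,n)f_-(z,n)=f_+(z,n+1)f_-(z,n+1)$ via the Schur recursion, the difference factors as
\begin{equation*}
\bigl(\alpha_n v+\overline{\alpha_n}\,u\bigr)\bigl(1-zP(z)\bigr)\bigl(1-\alpha_n z v\bigr),
\end{equation*}
and the last two factors are zero-free on $\D$, so $\overline{\alpha_n}\,f_+(z,n+1)=-\alpha_n f_-(z,n)$ whenever $\alpha_n\neq0$. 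That is a genuinely useful observation, and it is stronger than what you extract from it.

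The gap is the case $s=0$, and it is not a marginal case: it contains every $\cK(c,k)$ with $k\geq2$ (e.g.\ $\alpha_n\neq0$ exactly for $n$ in $k\Z$), which must end up in the ``reflectionless'' alternative. Evaluating your identity only at $z=0$ recovers nothing beyond constancy of the first moment $-\overline{\alpha_n}\alpha_{n-1}=[\EC]_{n,n}$; when that constant vanishes, the assertion that ``substituting back \dots pins the coefficients down to be periodic up to a rotation'' is precisely the statement to be proved --- it is the content of the higher-moment induction in $(ii)\Rightarrow(iii)$ of Theorem~\ref{Khrushchev Class Characterization} and cannot be waved through; the same goes for ``the equal-measure condition forces infinitely many uniformly spaced unimodular coefficients'' and, on the Jacobi side, ``larger blocks cannot have all site measures equal.'' The repair is available from the identity you already have, used for all $z$ rather than at $z=0$: since multiplying a Schur function by a unimodular constant multiplies all of its Schur parameters by that constant, $f_+(z,n_0+1)=-(\alpha_{n_0}/\overline{\alpha_{n_0}})f_-(z,n_0)$ reads, parameter by parameter, $\alpha_{2n_0-i}=(\alpha_{n_0}/\overline{\alpha_{n_0}})\,\overline{\alpha_i}$ for all $i\in\Z$ --- a reflection symmetry about every site $n_0$ with $\alpha_{n_0}\neq0$. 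Two such reflections about distinct nonzero sites $m<n$ compose to $\alpha_{i+2(n-m)}=\lambda\alpha_i$ with $|\lambda|=1$, i.e.\ periodicity up to rotation, after which your dichotomy (all $|\alpha_j|<1$: periodic, hence reflectionless; some $|\alpha_j|=1$: decoupling into congruent finite unitary blocks, hence pure point) goes through, the excluded cases being exactly the free matrix and the class of Example~\ref{example}. I would rewrite the argument along these lines, and carry out the analogous bookkeeping explicitly for the Simon Class rather than asserting it.
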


The rest of this paper is structured as follows. Section 2 contains the proof
of Theorems \ref{Remling} and \ref{sparse} as well as an application to random
perturbations of CMV matrices. Section 3 contains a proof of Theorems
\ref{equivalence} and \ref{Khrushchev}, and Section 4 contains our analysis of
the operators in the Khrushchev and Simon Classes and their relevance to
Khrushchev's theory of weak limits and ratio asymptotics.

{\bf Acknowledgments.} We would like to thank Barry Simon for
helpful discussions, as well as the referees for their useful
comments.

%
%
%
%

\section{The Proof of Remling's Theorem for OPUC}
Our proof will parallel that of Remling \cite{Remling} quite
closely, so we will content ourselves with presenting the parts that
differ significantly, but only sketching those parts that are
similar.

We will first need some definitions.  Let $z \in \D$ and let $S \subset
\partial \D$ a Borel set, and define $$\omega_z(S) = \int_S \re \Bigl(
\frac{e^{i\theta}+z}{e^{i\theta}-z} \Bigr) \frac{d\theta}{2\pi}.$$
(Here, and numerous times below, we have made use of the standard
identification of $\partial \D$ with $[0,2 \pi)$ in that the
integration is actually over the set $\{\theta \in [0,2\pi) :
e^{i\theta} \in S \}$.  We trust this will not cause any confusion.)
If $f: \D \rightarrow \D$ is a Schur function, define
$$\omega_{f(e^{i\theta})}(S) = \lim_{r \uparrow 1}
\omega_{f(re^{i\theta})}(S).$$  As $z \mapsto \omega_{f(z)}(S)$ is a
non-negative harmonic function in $\D$, Fatou's Theorem implies that
this limit exists for (Lebesgue) almost every $\theta$.

Given Schur functions $f_n(z)$ and $f(z)$, we will say that $f_n$ converges to
$f$ \emph{in the sense of Pearson} if for all Borel sets $A,S \subseteq
\partial \D$,
$$
\lim_{n \rightarrow \infty} \int_A \omega_{f_n(e^{i\theta})}(S) \frac{d\theta}{2\pi} = \int_A \omega_{f(e^{i\theta})}(S) \frac{d\theta}{2\pi}.
$$
(We note here that in \cite{BP1,BP2,Remling} this mode of convergence was called \emph{convergence in value distribution}.  However, since this term had already been used in \cite{nevanlinna} for a completely different concept, we will use the above name instead.)

The next lemma relates this type of convergence to a more standard
one:

\begin{lemma}\label{convergence}
Let $f$, $f_n$, $n\in\N$, be Schur functions. Then $f_n$ converges to $f$ in the sense
of Pearson if and only if $f_n(z)$ converges to $f(z)$ uniformly on compact
subsets of $\D$.
\end{lemma}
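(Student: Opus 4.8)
The plan is to prove the two implications separately, organized around the observation that, for each fixed Borel set $S$, the function $u^h(z):=\omega_{h(z)}(S)$ is harmonic on $\D$ with $0\le u^h\le 1$. Since bounded harmonic functions are Poisson integrals of their a.e.\ radial boundary values, $u^h$ is precisely the Poisson integral of $\phi\mapsto\omega_{h(e^{i\phi})}(S)$, and this representation is the engine of the argument.

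First I would prove that locally uniform convergence implies convergence in the sense of Pearson. Fix Borel sets $A,S$, and for a Schur function $h$ and $r<1$ set $I_r[h]=\int_A\omega_{h(re^{i\theta})}(S)\frac{d\theta}{2\pi}$ and $I[h]=\int_A\omega_{h(e^{i\theta})}(S)\frac{d\theta}{2\pi}$. Writing $u^h$ as a Poisson integral and applying Fubini (all integrands are nonnegative and bounded by $1$) gives $I_r[h]=\int_{\partial\D}Q_r(\phi)\,\omega_{h(e^{i\phi})}(S)\frac{d\phi}{2\pi}$, where $Q_r(\phi)=\int_A P_r(\theta-\phi)\frac{d\theta}{2\pi}=\omega_{re^{i\phi}}(A)$ and $P_r$ is the Poisson kernel. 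Because $P_r$ is an approximate identity and $\chi_A\in L^1$, we have $\eta(r):=\|Q_r-\chi_A\|_{L^1(\partial\D)}\to 0$ as $r\uparrow 1$, and since $0\le\omega_{h(e^{i\phi})}(S)\le 1$ this yields the key estimate $|I_r[h]-I[h]|\le\eta(r)$ \emph{uniformly in $h$}. On the other hand, for each fixed $r$ the points $f_n(re^{i\theta})$ converge to $f(re^{i\theta})$ uniformly on the circle $|z|=r$ (which $f$ maps into a compact subset of $\D$), so $\omega_{f_n(re^{i\theta})}(S)\to\omega_{f(re^{i\theta})}(S)$ and dominated convergence gives $I_r[f_n]\to I_r[f]$. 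A three-epsilon argument based on $|I[f_n]-I[f]|\le 2\eta(r)+|I_r[f_n]-I_r[f]|$ then closes this direction: pick $r$ with $2\eta(r)<\eps$, then take $n$ large.

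For the converse I would use normality. The $f_n$ are uniformly bounded by $1$, so by Montel's theorem every subsequence has a further subsequence converging uniformly on compacts to some Schur function $g$. By the implication just established, this sub-subsequence converges to $g$ in the sense of Pearson; but it also converges to $f$ in the sense of Pearson, since convergence of the numerical quantities $I[f_n]$ passes trivially to subsequences. Hence $f$ and $g$ have identical value distributions, i.e.\ $\int_A\omega_{f(e^{i\theta})}(S)\frac{d\theta}{2\pi}=\int_A\omega_{g(e^{i\theta})}(S)\frac{d\theta}{2\pi}$ for all Borel $A,S$. If this forces $f=g$, then every locally uniform subsequential limit of $\{f_n\}$ equals $f$, and normality upgrades this to $f_n\to f$ uniformly on compacts, completing the proof.

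The main obstacle is this uniqueness step: that equal value distributions force $f=g$. Since the identity holds for \emph{all} Borel $A$, for each fixed $S$ the bounded functions $\theta\mapsto\omega_{f(e^{i\theta})}(S)$ and $\theta\mapsto\omega_{g(e^{i\theta})}(S)$ agree a.e. Letting $S$ range over a countable $\pi$-system of arcs generating the Borel $\sigma$-algebra (together with $S=\partial\D$), I obtain a single full-measure set of $\theta$ on which the harmonic measures $\omega_{f(e^{i\theta})}(\cdot)$ and $\omega_{g(e^{i\theta})}(\cdot)$ coincide as measures. The underlying point is then recovered from its harmonic measure via the barycenter identity $w=\int_{\partial\D}e^{i\phi}\,d\omega_w(\phi)$, valid for $w\in\overline\D$ (with $\omega_w=\delta_w$ when $w\in\partial\D$), so $f(e^{i\theta})=g(e^{i\theta})$ for a.e.\ $\theta$; and two Schur functions with the same nontangential boundary values a.e.\ must coincide, by uniqueness for $H^\infty$. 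The delicate bookkeeping here is in handling the radial limits and the degenerate case $|f(e^{i\theta})|=1$, where $\omega_{f(e^{i\theta})}$ is a point mass, which I would manage by choosing the generating arcs so that their endpoints avoid the relevant boundary points for a.e.\ $\theta$.
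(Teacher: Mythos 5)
Your proposal is correct and follows essentially the same route as the paper's (sketched) proof: the direction from locally uniform convergence to Pearson convergence via the reproducing-kernel identity and the uniform boundary estimate (the paper's Lemmas \ref{reproducing} and \ref{uniformity}), and the converse via Montel's theorem together with uniqueness of limits in the sense of Pearson. The only difference is one of completeness: you actually supply the uniqueness argument (agreement of harmonic measures a.e.\ on a countable generating family of arcs, recovery of the boundary value by the barycenter identity, and $H^\infty$ uniqueness), which the paper asserts without proof and defers to Remling.
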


Of course, in this case it is well-known that the associated spectral measures then converge weakly as well.

\begin{proof}
We simply sketch the proof since the full details may be found in
\cite{Remling}. For the forward implication, we may use compactness
to pick a subsequence where $g(z):=\lim_{k \rightarrow
\infty}f_{n_k}$ exists (uniformly on compact subsets of $\D$) and
defines an analytic function.  By uniqueness of limits in the sense
of Pearson, we then must have $g = f$.

For the opposite direction, one may either use spectral averaging (as in \cite{Remling}), or simply appeal to Lemma \ref{uniformity} below.
\end{proof}

The basic result behind Theorem \ref{Remling} is the following analog of a
result of Breimesser and Pearson \cite{BP1}:

\begin{theorem}\label{Pearson}
Let $\cmv$ be a half-line CMV matrix.  For all Borel sets $S
\subseteq \partial \D$ and $A \subseteq \Sigma_{\ac}(\cmv)$ we have
$$
\lim_{n \rightarrow \infty} \Biggl(  \int_A
\omega_{f_+(e^{i\theta})}(S) \frac{d\theta}{2\pi} - \int_A
\omega_{e^{i\theta}f_-(e^{i\theta})}(S^\ast) \frac{d\theta}{2\pi}
\Biggr) = 0
$$
where $S^\ast = \{ z : \overline z \in S\}$.
\end{theorem}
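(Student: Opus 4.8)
The plan is to follow the Breimesser--Pearson strategy as streamlined by Remling \cite{Remling}, transported from the half-plane/Herglotz picture to the disk/Schur picture. The one structural fact that drives everything is the conformal invariance of the harmonic measure $\omega_w(S)$ on $\partial\D$: for a disk automorphism $\phi$ one has $\omega_{\phi(w)}(S)=\omega_w(\phi^{-1}(S))$, and for complex conjugation $\omega_{\overline w}(S)=\omega_w(S^\ast)$. I would first record these, together with the observation that one step of the Schur algorithm acts on boundary values as $f_+(e^{i\theta},n+1)=e^{-i\theta}\phi_{\alpha_n}(f_+(e^{i\theta},n))$, where $\phi_{\alpha_n}(w)=\frac{w-\alpha_n}{1-\overline{\alpha_n}w}$ (this is just \eqref{sa} read on $\partial\D$). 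Thus stripping a coefficient transforms the value distribution in a completely explicit way, by a Möbius relabeling of the target set $S$ followed by the rotation $e^{-i\theta}$; in particular $|f_+(e^{i\theta},n)|<1$ for all $n$ at exactly the same $\theta$ for which $|f_+(e^{i\theta},0)|<1$, and by \eqref{carat-schur} this last set is precisely $A=\Sigma_{\ac}(\cmv)$ up to measure zero. The analogous relation for $f_-(\cdot,n)$ runs the algorithm in the opposite direction, and the pairing of $e^{i\theta}f_-$ with the reflected set $S^\ast$ is exactly the bookkeeping of the would-be reflectionless relation $e^{i\theta}f_+=\overline{f_-}$, via $\omega_{e^{i\theta}f_-}(S^\ast)=\omega_{e^{-i\theta}\overline{f_-}}(S)$.

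The heart of the matter is the Schur-function analog of the main estimate of \cite{BP1}: controlling the value-distribution defect $\int_A\omega_{f_+(e^{i\theta},n)}(S)\,\frac{d\theta}{2\pi}-\int_A\omega_{e^{i\theta}f_-(e^{i\theta},n)}(S^\ast)\,\frac{d\theta}{2\pi}$ through the CMV/OPUC transfer matrices over the block $[0,n)$. I would express both $f_+(\cdot,n)$ and $f_-(\cdot,n)$ in terms of base-point data through this transfer, use that the one-step transfer matrices built from $\alpha_n,\rho_n$ lie in $SU(1,1)$ (the disk analog of $SL(2,\R)$, reflecting the conservation of an indefinite Wronskian-type form under the recursion), and then invoke the defining property of $A$: a.e.\ on $A$ the boundary values lie strictly inside $\D$, which is exactly the non-degeneracy needed to keep the transfer controlled and to force the incoming and outgoing distributions to match after the conjugation reflection. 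This core estimate, where the existence of absolutely continuous spectrum is used in an essential (not cosmetic) way, is the step I expect to be the main obstacle; it is also the place where the argument genuinely departs from the Jacobi case, since the half-line reflection $m\mapsto -1/m$ of \cite{Remling} is replaced here by the conjugation $w\mapsto e^{-i\theta}\overline w$ and the real-line harmonic measure by $\omega_w$ on $\partial\D$.

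Granting the core estimate a.e.\ on $A$, the passage to the limit is comparatively routine: since $0\le\omega_w(S)\le 1$ and $A$ has finite Lebesgue measure, one approximates $A$ from inside by sets on which the relevant convergence is uniform (Egorov), and upgrades a.e.\ convergence of the integrands to convergence of the integrals by bounded convergence. Along the way, the compactness half of Lemma \ref{convergence} is convenient, as it lets one extract, along any subsequence of the $n$'s, Schur functions converging uniformly on compact subsets of $\D$ and thereby identify the limiting value distributions; this is also the bridge by which Theorem \ref{Pearson} later yields the reflectionless conclusion of Theorem \ref{Remling}. In the writeup I would present in detail only the disk/Schur-specific invariances, the $SU(1,1)$ transfer structure, and the conjugation reflection, while the abstract value-distribution estimates that are formally identical to those of \cite{BP1,Remling} I would merely sketch.
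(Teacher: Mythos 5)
Your proposal correctly identifies the overall strategy (Breimesser--Pearson as streamlined by Remling, transported to the Schur/disk picture), and several of the right structural ingredients: the conjugation identity $\omega_{\overline w}(S^\ast)=\omega_w(S)$ that pairs $e^{i\theta}f_-$ with $S^\ast$, the fact that the one-step transfer matrices act as automorphisms of $\D$ on $\partial\D$, and the role of $\Sigma_{\ac}$ as the set where $|f_+(e^{i\theta},0)|<1$ a.e. But you explicitly defer the core estimate (``the step I expect to be the main obstacle''), and the sketch you give of how it would go is missing the two mechanisms that actually make the paper's proof work. First, the argument begins by decomposing $A=A_0\cup A_1\cup\dots\cup A_N$ with $|A_0|<\eps$ and, on each $A_k$, $f_+(e^{i\theta},0)$ within pseudohyperbolic distance $\eps$ of a single point $m_k$. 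This is essential: since the boundary transfer matrices are automorphisms, they preserve (do not shrink) the pseudohyperbolic metric, so the $\mathrm{Aut}(\D)$/$SU(1,1)$ structure alone only lets you replace $f_+(e^{i\theta},n)$ by the propagated reference point $P_+(e^{i\theta},n)m_k$ up to $\eps$; without near-constancy of $f_+(\cdot,0)$ on $A_k$ you get nothing. Second, and more importantly, the $n\to\infty$ limit comes from the \emph{strict contraction} of the backward transfer $T_-(z,\alpha)$ at interior points, $\gamma(T_-(z,\alpha)w_1,T_-(z,\alpha)w_2)\le|z|\,\gamma(w_1,w_2)$ for $z\in\D$ (Lemma \ref{mapping properties}(2)): after rewriting $P_+(e^{i\theta},n)m_k$ via the identity \eqref{T relation} as $e^{i\theta}P_-(e^{i\theta},n)(e^{-i\theta}m_k)$, it is this contraction that forces the backward-propagated reference point to converge to $zf_-(z,n)$ \emph{independently of $m_k$} for large $n$. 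Your proposal never mentions this contraction, and the automorphism property you do invoke holds only on the boundary, where there is no contraction; bridging the two requires the uniformity statement of Lemma \ref{uniformity} (harmonicity of $z\mapsto\omega_{f(z)}(S)$ plus a Poisson-kernel estimate uniform over all Schur functions), which is also absent from your outline.

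A secondary point: your closing step (``Egorov plus bounded convergence, granting the core estimate a.e.\ on $A$'') misdescribes the structure of the limit passage. There is no a.e.\ pointwise convergence of the integrands $\omega_{f_+(e^{i\theta},n)}(S)$ as $n\to\infty$ --- in general these do not converge --- and the cancellation in the theorem is between the two integrals, obtained by summing the per-piece estimates over $k$ and letting $\eps\downarrow0$, not by dominated convergence of a pointwise limit. As written, the proposal is a plan whose hardest step is both deferred and sketched without the decomposition, the interior contraction, and the boundary-to-interior uniformity lemma that together constitute the actual proof.
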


Assuming Theorem \ref{Pearson} for a moment, we can prove Theorem~\ref{Remling}:

\begin{proof}[Proof of Theorem \ref{Remling}]
Let $\EC$ be a right limit of $\cmv$, so there is a sequence $n_j
\uparrow \infty$ such that $\lim_{j \rightarrow
\infty}\alpha_{n+n_j}(\cmv) =\alpha_n(\EC)$ for the
corresponding sequences of Verblunsky coefficients. Thus, if
$f_\pm(z)$ are the Schur functions of $\EC$ defined by \eqref{f} for
$n=0$, then
$$
f_\pm(z,n_j) \rightarrow f_\pm(z) \,\text{ as }\, j\to\infty
$$
uniformly on compact subsets of
$\D$.  By Lemma~\ref{convergence} and Theorem~\ref{Pearson} we now
have
$$
\int_A \omega_{f_+(e^{i\theta})}(S) \frac{d\theta}{2\pi} = \int_A \omega_{e^{i\theta}f_-(e^{i\theta})}(S^\ast) \frac{d\theta}{2\pi}
$$
for all Borel sets $A \subseteq \Sigma_{\ac}(\cmv), S \subseteq
\partial \D$.

Now Lebesgue's differentiation theorem and the fact that
$\omega_{\overline z}(S^\ast) = \omega_z(S)$ shows
$$f_+(e^{i\theta}) = e^{-i\theta} \overline{f_-(e^{i\theta})}$$
almost everywhere on $\Sigma_{\ac}(\cmv)$.  Thus, $\EC$ is
reflectionless on $\Sigma_{\ac}(\cmv)$.
\end{proof}

We now turn to the proof of Theorem \ref{Pearson}.  We will need a few preparatory results.

\begin{lemma}\label{reproducing}
For any Schur function $f(z)$, Borel set $S \subseteq \partial \D$, and $z \in \D$ we have
$$
\omega_{f(z)}(S) = \int_0^{2\pi} \omega_{f(e^{i\theta})}(S) d\omega_z(e^{i\theta}).
$$
In particular, for any Borel set $A \subseteq \partial \D$,
$$
\int_A \omega_{f(r e^{i\theta})}(S) \frac{d\theta}{2\pi} = \int_0^{2\pi} \omega_{f(e^{i\theta})}(S) \omega_{re^{i\theta}}(A) \frac{d\theta}{2\pi}.
$$
\end{lemma}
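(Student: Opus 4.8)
The plan is to recognize the map $z \mapsto \omega_{f(z)}(S)$ as a \emph{bounded} harmonic function on $\D$ and to invoke the Poisson representation valid for such functions. Set $u(z) := \omega_{f(z)}(S)$. The excerpt already records that $u$ is a nonnegative harmonic function on $\D$; the only structural fact I would add is the uniform bound. This is immediate, since for every $w \in \D$ one has $0 \le \omega_w(S) \le \omega_w(\partial\D) = \int_0^{2\pi} \re\bigl(\frac{e^{i\theta}+w}{e^{i\theta}-w}\bigr)\frac{d\theta}{2\pi} = 1$, so that $0 \le u \le 1$ throughout $\D$. (Conceptually, $\omega_w(S)$ is the value at $w$ of the harmonic extension of $\mathbf{1}_S$ to the disc, and $u = \omega_{f(\cdot)}(S)$ is this harmonic function precomposed with the holomorphic map $f$, which makes the harmonicity and the bound transparent.)

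Because $u$ is bounded and harmonic on $\D$, it is the Poisson integral of its radial boundary values: with $u^*(e^{i\theta}) := \lim_{r\uparrow 1} u(re^{i\theta})$, which exists Lebesgue-a.e.\ by Fatou's theorem, one has
$$u(z) = \int_0^{2\pi} u^*(e^{i\theta})\, d\omega_z(e^{i\theta}).$$
But the definition of $\omega_{f(e^{i\theta})}(S)$ is precisely $\lim_{r\uparrow 1}\omega_{f(re^{i\theta})}(S) = \lim_{r\uparrow 1} u(re^{i\theta}) = u^*(e^{i\theta})$, so substituting this identification into the representation yields the first claimed identity directly. No work beyond matching the definitions is required here.

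For the second identity I would simply specialize and integrate. Replacing $z$ by $re^{i\psi}$ in the first formula and recalling $d\omega_{re^{i\psi}}(e^{i\theta}) = \re\bigl(\frac{e^{i\theta}+re^{i\psi}}{e^{i\theta}-re^{i\psi}}\bigr)\frac{d\theta}{2\pi}$, I integrate over $\psi \in A$ and interchange the order of integration by Tonelli's theorem, all integrands being nonnegative. The resulting inner integral $\int_A \re\bigl(\frac{e^{i\theta}+re^{i\psi}}{e^{i\theta}-re^{i\psi}}\bigr)\frac{d\psi}{2\pi}$ equals $\omega_{re^{i\theta}}(A)$, since this Poisson kernel equals $\frac{1-r^2}{1-2r\cos(\theta-\psi)+r^2}$ and is therefore symmetric under interchange of $\psi$ and $\theta$; renaming the outer dummy variable $\psi$ back to $\theta$ gives exactly the stated right-hand side. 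The substantive content of the lemma is thus entirely in the first paragraph, namely confirming that $u$ lies in the class for which the Poisson representation recovers a harmonic function from its a.e.\ radial limits; boundedness is what secures this, and the remaining Tonelli and kernel-symmetry steps are routine.
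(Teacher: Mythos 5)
Your proof is correct and takes essentially the same route as the paper: the paper's one-line argument that ``both sides are harmonic functions of $z$ with the same boundary values'' is precisely the Poisson-representation step for the bounded harmonic function $z \mapsto \omega_{f(z)}(S)$ that you spell out, and the second identity is likewise obtained there by writing out the Poisson kernel and applying Fubini. No gaps.
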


\begin{proof}
For the first statement, just note that both sides are harmonic functions of
$z$ with the same boundary values.  The second statement follows by writing
$$d\omega_{re^{i\theta}}(e^{i\phi}) = \frac{1-r^2}{1+r^2-2r\cos (\phi-\theta)}
\frac{d\phi}{2\pi}$$ and applying Fubini's theorem.
\end{proof}

\begin{lemma}\label{uniformity}
Let $A \subseteq \partial \D$ be a Borel subset.  Then
$$
\lim_{r \uparrow 1} \sup_{f,S} \Biggl| \int_A \omega_{f(re^{i\theta})}(S) \frac{d\theta}{2\pi} - \int_A \omega_{f(e^{i\theta})}(S) \frac{d\theta}{2\pi} \Biggr| = 0
$$
where the supremum is taken over all Schur functions $f(z)$ and all Borel sets $S \subseteq \partial \D$.
\end{lemma}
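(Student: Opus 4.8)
The plan is to reduce the supremum over all Schur functions $f$ and all Borel sets $S$ to a single quantity that no longer involves $f$ or $S$, and then to recognize that quantity as the $L^1$-error of a Poisson integral. The key structural input is the second identity of Lemma~\ref{reproducing}, which lets me transfer the $r$-dependence off the integrand $\omega_{f(\cdot)}(S)$ and onto a harmonic measure of the \emph{fixed} set $A$.

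First I would apply Lemma~\ref{reproducing} to write
$$\int_A \omega_{f(re^{i\theta})}(S)\,\frac{d\theta}{2\pi} = \int_0^{2\pi} \omega_{f(e^{i\theta})}(S)\,\omega_{re^{i\theta}}(A)\,\frac{d\theta}{2\pi},$$
while trivially $\int_A \omega_{f(e^{i\theta})}(S)\,\frac{d\theta}{2\pi} = \int_0^{2\pi}\omega_{f(e^{i\theta})}(S)\,\mathbf{1}_A(e^{i\theta})\,\frac{d\theta}{2\pi}$. Subtracting, the difference whose supremum I must control equals
$$\int_0^{2\pi}\omega_{f(e^{i\theta})}(S)\bigl[\omega_{re^{i\theta}}(A) - \mathbf{1}_A(e^{i\theta})\bigr]\,\frac{d\theta}{2\pi}.$$

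The decisive point is that $0 \le \omega_{f(e^{i\theta})}(S) \le 1$ for every $\theta$, since $\omega_z(\cdot)$ is a harmonic measure with $\omega_z(\partial\D)=1$; this bound holds regardless of which Schur function $f$ and which set $S$ are chosen. Hence I may pull the absolute value inside and bound the difference, uniformly in $f$ and $S$, by
$$\int_0^{2\pi}\bigl|\omega_{re^{i\theta}}(A) - \mathbf{1}_A(e^{i\theta})\bigr|\,\frac{d\theta}{2\pi},$$
an expression independent of $f$ and $S$. Taking the supremum over $(f,S)$ therefore leaves only this $r$-dependent quantity.

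Finally I would identify $\omega_{re^{i\theta}}(A)$ as the Poisson integral of $\mathbf{1}_A$: inserting $z = re^{i\theta}$ into the definition of $\omega_z(A)$ produces exactly the Poisson kernel, so $\omega_{re^{i\theta}}(A) = (P_r \ast \mathbf{1}_A)(\theta)$. Since $\mathbf{1}_A \in L^1(\partial\D)$, the classical fact that Poisson integrals converge in $L^1$-norm to their boundary data gives $\int_0^{2\pi}\bigl|\omega_{re^{i\theta}}(A) - \mathbf{1}_A(e^{i\theta})\bigr|\frac{d\theta}{2\pi}\to 0$ as $r\uparrow 1$, completing the argument. I do not anticipate a genuine obstacle here: the whole content is the uniformization step, and once Lemma~\ref{reproducing} is in hand the uniform bound $\omega_{f(e^{i\theta})}(S)\le 1$ makes the dependence on $f$ and $S$ disappear for free, while the remaining limit is a standard property of the Poisson kernel.
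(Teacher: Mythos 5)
Your proof is correct and follows essentially the same route as the paper, which reduces the claim via Lemma~\ref{reproducing} to the $f$-independent quantity $\int_A \omega_{re^{i\theta}}(A^c)\,\frac{d\theta}{2\pi}$; your bound $\int_0^{2\pi}\bigl|\omega_{re^{i\theta}}(A)-\mathbf{1}_A(e^{i\theta})\bigr|\,\frac{d\theta}{2\pi}$ is exactly twice that quantity (split the integral over $A$ and $A^c$ and use the symmetry of the Poisson kernel), and your appeal to $L^1$-convergence of Poisson integrals is the same standard fact the paper defers to Remling's Lemma A.1.
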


\begin{proof}
This follows from Lemma~\ref{reproducing} and analyzing (the
$f$-independent quantity) $$\int_A \omega_{re^{i\theta}}(A^c)
\frac{d\theta}{2\pi}.$$  For more details, see Lemma A.1 in
\cite{Remling} whose proof is nearly identical.
\end{proof}

We will need a notion of pseudohyperbolic distance on $\D$.  Given $w_1,w_2 \in \D$ define
$$
\gamma(w_1 , w_2 ) = \frac{|w_1 - w_2|}{\sqrt{1-|w_1|^2}\sqrt{1-|w_2|^2}}.
$$
This is an increasing function of the hyperbolic distance on $\D$.
As such, if $F:\D \rightarrow \D$ is analytic, then $$\gamma\bigl(
F(w_1) , F(w_2) \bigr) \leq \gamma (w_1 , w_2)$$ and if $F$ is an
automorphism with respect to hyperbolic distance on $\D$
(written``$F \in \rm{Aut}(\D)$'') then we have equality above.
Taking $F(z)$ to be the analytic function whose real part is
$\omega_z(S)$, we see that for all $z,\zeta \in \D$ and all Borel
sets $S \subseteq
\partial \D$,
\begin{equation}\label{o g relation}
|\omega_{z}(S) - \omega_{\zeta}(S) | \leq \frac{|\omega_{z}(S) - \omega_{\zeta}(S) |}{ \sqrt{1 - |\omega_z(S)|^2} \sqrt{1 - |\omega_{\zeta}(S)|^2} } \leq \gamma\bigl( F(z) , F(\zeta) \bigr) \leq \gamma(z,\zeta).
\end{equation}

Now let $\{\alpha_n\}_{n \in \Z}$ be a sequence of Verblunsky coefficients (some of which may lie on $\partial \D$).  Recall the two sequences of Schur functions defined by \eqref{f}:
$$
f_+(z,n) = f(z;\alpha_n , \alpha_{n+1}, \dots), \quad\quad\quad
f_-(z,n) = f(z; -\overline{\alpha_{n-1}} , -\overline{\alpha_{n-2}}
, \dots).
$$
Since the Schur algorithm terminates at any $\alpha_k \in \partial
\D$, we see that for a half-line sequence of $\alpha$'s (recall
$\alpha_{-1} = -1$) we have $f_-(z,n=0) = -\overline{\alpha_{-1}} =
1$.

Viewing matrix arithmetic projectively (that is, identifying an
automorphism of $\D$ with its coefficient matrix, see for instance
\cite{Remling}), the Schur algorithm shows
$$f_\pm(z,n+1) = T_\pm(z,\alpha_n)f_\pm (z,n)$$ where
$$T_+(z,\alpha) = \begin{bmatrix} 1&-\alpha \\ -z \overline \alpha &
z \end{bmatrix} \quad\text{and}\quad T_-(z,\alpha) = \begin{bmatrix}
z&-\overline \alpha \\ -z \alpha & 1 \end{bmatrix}.$$
By elementary
manipulations we see that for any $z \in \C$,
\begin{equation}\label{T relation}
T_+(z,\alpha) = \begin{bmatrix} 1 & 0 \\ 0 & z \end{bmatrix} \overline{T_-(z,\alpha)} \begin{bmatrix} 1 & 0 \\ 0 & \overline z \end{bmatrix}.
\end{equation}
We will let $$P_\pm(z,n) = T_\pm (z, \alpha_{n-1}) \cdots T_\pm (z,\alpha_0)$$ so that $$f_\pm (z,n) = P_\pm (z,n) f_\pm (z,n=0).$$

We have the following mapping properties of $T_\pm(z,\alpha)$:

\begin{lemma}\label{mapping properties}
Let $\alpha \in \D$.

(1) If $z \in \partial \D$, then $T_\pm (z,\alpha) \in
\rm{Aut}(\D)$.

(2) If $z \in \D$, then $T_-(z,\alpha): \D \rightarrow \D$ and $$\gamma \bigl(T_-(z,\alpha)w_1 , T_-(z,\alpha)w_2 \bigr) \leq |z| \gamma (w_1 , w_2)$$ for all $w_1,w_2 \in \D$.
\end{lemma}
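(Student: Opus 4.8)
The plan is to reduce both parts to two facts about the pseudohyperbolic distance $\gamma$ that are already in place before the statement: that $\gamma$ is preserved by members of $\mathrm{Aut}(\D)$ (the equality case of Schwarz--Pick, recorded in the paragraph preceding \eqref{o g relation}), and that the pure scaling map $M_z\colon w\mapsto zw$ contracts $\gamma$. The key observation is that each of $T_+(z,\alpha)$ and $T_-(z,\alpha)$ factors into a fixed $\alpha$-dependent disk automorphism times a diagonal matrix carrying all of the $z$-dependence. A direct multiplication gives
\begin{equation*}
T_-(z,\alpha) = \begin{bmatrix} 1 & -\overline{\alpha} \\ -\alpha & 1 \end{bmatrix} \begin{bmatrix} z & 0 \\ 0 & 1 \end{bmatrix}, \qquad T_+(z,\alpha) = \begin{bmatrix} 1 & 0 \\ 0 & z \end{bmatrix} \begin{bmatrix} 1 & -\alpha \\ -\overline{\alpha} & 1 \end{bmatrix}.
\end{equation*}
In each factorization the non-diagonal factor acts projectively as a Blaschke automorphism $w\mapsto (w-\beta)/(1-\overline{\beta}w)$, with $\beta=\overline{\alpha}$ for $T_-$ and $\beta=\alpha$ for $T_+$; since $|\beta|=|\alpha|<1$, this factor lies in $\mathrm{Aut}(\D)$. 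The remaining diagonal factor represents $w\mapsto zw$ or $w\mapsto w/z$.

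For part (1), suppose $z\in\partial\D$, so $|z|=1$. Then the diagonal factors represent the rotations $w\mapsto zw$ and $w\mapsto\overline{z}w$, which belong to $\mathrm{Aut}(\D)$. Hence each $T_\pm(z,\alpha)$ is a composition of two disk automorphisms and so is itself in $\mathrm{Aut}(\D)$. (Alternatively, once the statement is known for $T_-$, the statement for $T_+$ follows from \eqref{T relation}, since entrywise conjugation of the coefficient matrix and multiplication by the unimodular diagonal factors both preserve membership in $\mathrm{Aut}(\D)$.)

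For part (2), take $z\in\D$, so $|z|<1$, and write $A$ for the automorphism $\begin{bmatrix} 1 & -\overline{\alpha} \\ -\alpha & 1 \end{bmatrix}$ and $M_z$ for $w\mapsto zw$, so that $T_-(z,\alpha)=A\circ M_z$. Because $M_z(\D)\subseteq\D$ (as $|z|<1$) and $A(\D)=\D$, this already gives $T_-(z,\alpha)\colon\D\to\D$. For the contraction estimate I would first record the elementary computation
\begin{equation*}
\gamma(zw_1,zw_2) = \frac{|z|\,|w_1-w_2|}{\sqrt{1-|z|^2|w_1|^2}\,\sqrt{1-|z|^2|w_2|^2}} \le |z|\,\gamma(w_1,w_2),
\end{equation*}
where the inequality uses $1-|z|^2|w_i|^2\ge 1-|w_i|^2$, valid since $|z|\le 1$. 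Combining this with the $\gamma$-invariance of $A$ then yields
\begin{equation*}
\gamma\bigl(T_-(z,\alpha)w_1,\,T_-(z,\alpha)w_2\bigr) = \gamma\bigl(M_z w_1,\,M_z w_2\bigr) \le |z|\,\gamma(w_1,w_2),
\end{equation*}
which is exactly the claim.

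I do not expect a genuine obstacle. The only real step is spotting the factorization that isolates the $z$-dependence in the single scaling map $M_z$; after that, part (1) is immediate and part (2) reduces to the one-line contraction inequality for $w\mapsto zw$ together with invariance of $\gamma$ under automorphisms. The small amount of bookkeeping to be careful about is the projective (matrix-versus-M\"obius-map) dictionary and the verification that $\begin{bmatrix} 1 & -\overline{\alpha} \\ -\alpha & 1 \end{bmatrix}$ is truly a disk automorphism, i.e.\ that its lower row is the conjugate of its upper row and its ``determinant'' $1-|\alpha|^2$ is positive.
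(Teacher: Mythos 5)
Your proof is correct and follows essentially the same route as the paper: the paper also factors $T_-(z,\alpha)=S(\overline\alpha)M(z)$ and $T_+(z,\alpha)=M(z^{-1})S(\alpha)$ with $S(\alpha)=\bigl[\begin{smallmatrix}1&-\alpha\\-\overline\alpha&1\end{smallmatrix}\bigr]\in\mathrm{Aut}(\D)$ and $M(z)=\bigl[\begin{smallmatrix}z&0\\0&1\end{smallmatrix}\bigr]$, then notes $M(z)\in\mathrm{Aut}(\D)$ for $|z|=1$ and $\gamma\bigl(M(z)w_1,M(z)w_2\bigr)\le|z|\gamma(w_1,w_2)$ for $|z|<1$. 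The only difference is that you write out the ``straightforward calculation'' for the contraction of $w\mapsto zw$ that the paper leaves implicit.
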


\begin{proof}
Let $$S(\alpha) = \begin{bmatrix} 1 & -\alpha \\ -\overline \alpha &
1\end{bmatrix} \quad\text{and}\quad M(z) = \begin{bmatrix} z & 0 \\
0 & 1 \end{bmatrix}$$ so that $$T_+(z,\alpha) = M(z^{-1})S(\alpha)
\quad\text{and}\quad T_-(z,\alpha) = S(\overline \alpha) M(z).$$
Because $\alpha \in \D$ we have $S(\alpha) \in \rm{Aut}(\D)$.  If $z
\in
\partial \D$ then $M(z) \in \rm{Aut}(\D)$ as well, while a
straightforward calculation shows that if $z \in \D$ then $$\gamma
\bigl( M(z)w_1 , M(z)w_2 \bigr) \leq |z| \gamma(w_1 , w_2).$$  This
proves (1) and (2).
\end{proof}

With these preliminaries in hand we are ready for the proof of
Theorem~\ref{Pearson}.  We emphasize again that we are following the
proof of Theorem 3.1 from \cite{Remling}.

\begin{proof}[Proof of Theorem \ref{Pearson}]
Subdivide $A = A_0 \cup A_1 \cup \dots \cup A_N$ in such a way that
\begin{enumerate}[1.]
\item $|A_0| < \eps$.
\item On $\bigcup_{k=1}^N A_k$, $\lim_{r \uparrow
1}f_+(re^{i\theta},0)$ exists and lies in $\D$.
\item For each $1 \leq k \leq N$, there is a point $m_k \in \D$ such that $\gamma \bigl( f_+(e^{i\theta},0), m_k \bigr) < \eps$ for all $e^{i\theta}\in A_k$.
\end{enumerate}
The construction of such a decomposition is identical to that given
in \cite{Remling}, so we do not review it here.

To deal with $A_0$, we note that for any $z \in \overline \D$ and any Borel set $S \subseteq \partial \D$, we have $|\omega_z(S)| \leq 1$.  Thus
$$
\Biggl| \int_{A_0} \omega_{f_+(e^{i\theta},n)}(S) \frac{d\theta}{2\pi} - \int_{A_0} \omega_{e^{i\theta}f_-(e^{i\theta},n)}(S^\ast) \frac{d\theta}{2\pi}  \Biggr| < 2\eps.
$$

Now we consider $A_1, \dots, A_N$.  Notice that if $e^{i\theta} \in
\bigcup_{k=1}^N A_k$, then for all $n \in \N$ we also have that
$\lim_{r \uparrow 1}f_+(re^{i\theta},n)$ exists and lies in $\D$. As
$P_+(e^{i\theta},n) \in \rm{Aut}(\D)$ we see $$\gamma\bigl(
f_+(e^{i\theta},n) , P_+(e^{i\theta},n) m_k \bigr) < \eps$$ for all
$e^{i\theta} \in A_k$ and all $n \in \N$. Using \eqref{o g relation}
and integrating we find
$$
\Biggl| \int_{A_k} \omega_{f_+(e^{i\theta},n)}(S) \frac{d\theta}{2\pi} - \int_{A_k} \omega_{P_+(e^{i\theta},n) m_k}(S) \frac{d\theta}{2\pi} \Biggr| < \eps |A_k|.
$$

By \eqref{T relation} and the fact that $\omega_{\overline
z}(S^\ast) = \omega_z (S)$, we can rewrite this as
\begin{equation}\label{e1}
\Biggl| \int_{A_k} \omega_{f_+(e^{i\theta},n)}(S) \frac{d\theta}{2\pi} - \int_{A_k} \omega_{e^{i\theta}P_-(e^{i\theta},n) (e^{-i\theta} m_k)}(S^\ast) \frac{d\theta}{2\pi} \Biggr| < \eps |A_k|
\end{equation}
(and notice that because $T_-(z,\alpha) = S(\overline \alpha) M(z)$, we have that $z P_-(z,n) (z^{-1} m_k)$ is indeed a Schur function).

By Lemma \ref{mapping properties} there is an $n_0 \in \N$ so that for all $n \geq n_0$, $$\gamma \bigl( zP_-(z,n) (z^{-1}w_k) , z f_-(z,n) \bigr) < \eps.$$  As before, using \eqref{o g relation} and integrating shows
\begin{equation}\label{e2}
\Biggl| \int_{A_k} \omega_{zP_-(z,n) (z^{-1}w_k)}(S^\ast) \frac{d\theta}{2\pi}  -  \int_{A_k} \omega_{zf_-(z,n)}(S^\ast) \frac{d\theta}{2\pi} \Biggr| < \eps |A_k|.
\end{equation}

Now use Lemma \ref{uniformity} to find an $r<1$ so that
$$
\Biggl| \int_{A_k} \omega_{f(e^{i\theta})}(S) \frac{d\theta}{2\pi} - \int_{A_k} \omega_{f(re^{i\theta})}(S) \frac{d\theta}{2\pi}  \Biggr| < \eps |A_k|
$$
for all Schur functions $f(z)$, all Borel sets $S \subseteq \partial \D$, and $k=1, \dots, N$.  Applying this to \eqref{e1} and \eqref{e2} shows
$$
\Biggl| \int_{A_k} \omega_{f_+(e^{i\theta},n)}(S) \frac{d\theta}{2\pi} -  \int_{A_k} \omega_{e^{i\theta}f_-(e^{i\theta},n)}(S^\ast) \frac{d\theta}{2\pi}  \Biggr| < 4\eps |A_k|.
$$

Now summing in $k$ shows
$$
\Biggl| \int_{A} \omega_{f_+(e^{i\theta},n)}(S) \frac{d\theta}{2\pi} -  \int_{A} \omega_{e^{i\theta}f_-(e^{i\theta},n)}(S^\ast) \frac{d\theta}{2\pi}  \Biggr| < 4\eps |A| + 2\eps
$$
for all $n \geq n_0$.
\end{proof}

Next, we illustrate Theorem \ref{Remling} by a simple example of
constant coefficients CMV matrices:
\begin{example}
Let $\cmv$ be the half-line CMV matrix associated with the constant Verblunsky coefficients $\al_n=a$, $n\geq0$, for some $a\in(0,1)$. It follows from the Schur algorithm that the corresponding Schur function $f_a$ satisfies the quadratic equation
\[
azf_a(z)^2 + (1-z)f_a(z) - a = 0,
\]
and hence is given by
\[
f_a(z)=\f{-(1-z)+\sqrt{(1-z)^2+4a^2z}}{2az}, \quad z\in\D,
\]
where the square root is defined so that
$\sqrt{e^{i\te}}=e^{i\te/2}$ for $\te\in(-\pi,\pi)$. Using the
Carath\'eodory function $F_a(z)=\f{1+zf_a(z)}{1-zf_a(z)}$ we compute
\begin{align*}
\Sigma_{\ac}(\cmv) &= \{e^{i\te} \;:\; \Re\,F_a(e^{i\te})>0 \} = \{e^{i\te} \;:\; |f_a(e^{i\te})|<1\} \\ &= \{e^{i\te} \;:\; 2\arcsin(a)<\te<2\pi-2\arcsin(a)\}.
\end{align*}
The half-line CMV matrix $\cmv$ has exactly one right limit $\EC$ which is the whole-line CMV matrix associated with the constant coefficients $\wti\al_n=a$, $n\in\Z$. It follows from \eqref{f} that the two Schur functions for $\EC$ are given by $f_+(z,n)=f_a(z)$ and $f_-(z,n)=f_{-a}(z)=-f_a(z)$, $n\in\Z$. Since for all $e^{i\te}\in\Sigma_{\ac}(\cmv)$,
\[
f_a(e^{i\te})=\f{i\sin(\te/2)}{ae^{i\te/2}} \left(1-\sqrt{1-\left(\f{a}{\sin(\te/2)}\right)^2}\,\right)
\]
and the expression under the square root is positive, one easily verifies the reflectionless property of $\EC$ on $\Sigma_{\ac}(\cmv)$,
\[
e^{i\te}f_+(e^{i\te},n)=e^{i\te}f_a(e^{i\te})=-\ol{f_a(e^{i\te})}=\ol{f_-(e^{i\te},n)}, \quad e^{i\te}\in\Sigma_{\ac}(\cmv),
\]
thus confirming the claim of Theorem \ref{Remling}.

Note that adding a decaying perturbation to the Verblunsky
coefficients of $\cmv$ does not change the uniqueness of the right
limit, nor does it change the limiting operator. Moreover, if the
decay is sufficiently fast (e.g.\ $\ell^1$), $\Sigma_{\ac}(\cmv)$
does not change either.
\end{example}

The following is one of the reasons reflectionless operators are so useful:

\begin{lemma}\label{deterministic}
Let $\{\alpha_n\}_{n \in \Z}$, $\{\beta_n\}_{n \in \Z}$ be two sequences of
Verblunsky coefficients such that their corresponding whole-line CMV matrices
are both reflectionless on some common set $A$ with $|A|>0$. If
$\alpha_n=\beta_n$ for all $n <0$, then $\alpha_n=\beta_n$ for all $n$.
\end{lemma}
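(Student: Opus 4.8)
The plan is to exploit the reflectionless condition at the single index $n=0$, together with the fact that the left Schur function $f_-(z,0)$ is determined entirely by the coefficients of negative index. Recall from \eqref{f} that $f_-(z,0) = f(z;-\overline{\alpha_{-1}},-\overline{\alpha_{-2}},\dots)$, so it depends only on $\{\alpha_j\}_{j<0}$. Since $\alpha_j = \beta_j$ for all $j<0$ by hypothesis, the two sequences produce the \emph{same} left Schur function at $n=0$, which I will simply call $f_-(z,0)$.

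Next I would write down the reflectionless condition (Definition \ref{reflectionless}) at $n=0$ for each matrix. Since both are reflectionless on $A$, we have $z f_+^{(\alpha)}(z,0) = \overline{f_-(z,0)}$ and $z f_+^{(\beta)}(z,0) = \overline{f_-(z,0)}$ for Lebesgue-a.e.\ $z \in A$. Subtracting and using $|z|=1$ on $\partial\D$ gives $f_+^{(\alpha)}(z,0) = f_+^{(\beta)}(z,0)$ for a.e.\ $z \in A$. Thus the nontangential boundary values of the two Schur functions $f_+^{(\alpha)}(\cdot,0)$ and $f_+^{(\beta)}(\cdot,0)$ agree on the positive-measure set $A$.

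The crucial step is then a boundary uniqueness argument. The difference $g = f_+^{(\alpha)}(\cdot,0) - f_+^{(\beta)}(\cdot,0)$ is a bounded analytic function on $\D$ whose boundary values vanish on $A$, a set of positive Lebesgue measure. By the classical uniqueness theorem for $H^\infty$ functions (indeed for functions in the Nevanlinna class), a function whose boundary values vanish on a set of positive measure must vanish identically; hence $g \equiv 0$, i.e.\ $f_+^{(\alpha)}(z,0) = f_+^{(\beta)}(z,0)$ for all $z \in \D$.

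Finally, I would invoke the uniqueness of the Schur algorithm, which sets up a $1$-$1$ correspondence between Schur functions and their Schur parameter sequences. Since $f_+^{(\alpha)}(z,0) = f(z;\alpha_0,\alpha_1,\dots)$ and $f_+^{(\beta)}(z,0) = f(z;\beta_0,\beta_1,\dots)$ coincide as analytic functions on $\D$, their Schur parameters agree, so $\alpha_n = \beta_n$ for all $n \geq 0$. Combined with the hypothesis $\alpha_n = \beta_n$ for $n<0$, this yields $\alpha_n = \beta_n$ for all $n \in \Z$. The only non-formal ingredient, and the step I expect to be the main obstacle, is the boundary uniqueness theorem in the third paragraph; the remaining steps are a direct bookkeeping of the Schur-algorithm definitions.
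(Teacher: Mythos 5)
Your proposal is correct and follows essentially the same route as the paper's proof: the negative-index coefficients determine $f_-(z,0)$, the reflectionless condition then pins down the boundary values of $f_+(z,0)$ on the positive-measure set $A$, the classical uniqueness theorem for bounded analytic functions recovers $f_+(\cdot,0)$ on all of $\D$, and the Schur algorithm's bijectivity recovers $\{\alpha_n\}_{n\ge 0}$. Your write-up merely makes explicit the subtraction step and the $H^\infty$ uniqueness theorem that the paper invokes in one line.
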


\begin{proof}
By the Schur algorithm, $\{\alpha_n\}_{n<0}$ determines $f_-(z,0)$. This, by
Definition \ref{reflectionless}, determines $f_+(z,0)$ on $A$. But the values
of a Schur function on a set of positive Lebesgue measure on $\partial \D$
determine the Schur function. Thus, $f_+(z,0)$ is determined throughout $\D$ by
$\{\alpha_n\}_{n<0}$. But, by the Schur algorithm again, this determines
$\{\alpha_n\}_{n \geq 0}$.
\end{proof}

\begin{proof}[Proof of Theorem \ref{sparse}]
Take a subsequence $\{n_{j_k} \}_{k=1}^\infty$, of $n_j$, such that both
$$\lim_{k \rightarrow \infty} \alpha_{n+n_{j_k}}\equiv \beta_n$$ and
$$\lim_{k \rightarrow \infty}  \widetilde{\alpha}_{n+n_{j_k}} \equiv \widetilde{\beta}_n$$
exist for every $n \in \Z$. By conditions $(ii)$ and $(iii)$ of the
theorem, $\beta_n=\widetilde{\beta}_n$ for all $n<0$ but $\beta_0
\neq \widetilde{\beta}_0$. By Theorem~\ref{Remling} the
corresponding whole-line CMV matrices, $\EC$ and $\widetilde{\EC}$,
are reflectionless on $\Sigma_{\ac}(\cmv)$ and
$\Sigma_{\ac}(\widetilde{\cmv})$ respectively. Thus, by
Lemma~\ref{deterministic} these two sets cannot intersect each
other.

Viewing a CMV matrix satisfying \eqref{mate-nevai-sparse} as a
perturbation of the CMV matrix with all Verblunsky coefficients
equal to zero (this matrix has spectral measure
$\frac{d\theta}{2\pi}$), we see by the above analysis that such a
matrix cannot have any absolutely continuous spectrum, since
\eqref{mate-nevai-sparse} is easily seen to imply conditions $(ii)$
and $(iii)$ of the theorem.
\end{proof}

We conclude this section with an application of Theorem
\ref{Remling} to random CMV matrices.

\begin{theorem}\label{random}
Let $\{\beta_n(\omega)\}_{n=1}^\infty$ be a sequence of random
Verblunsky coefficients of the form:
$$\beta_n(\omega)=\alpha_n+s_n X_n(\omega)$$ where
$X_n(\omega)$ is a sequence of independent, identically distributed
random variables whose common distribution is not supported at a
single point, and $s_n$ is a bounded sequence such that
$|\alpha_n+s_n|<1$. Let $\cmv(\omega)$ be the corresponding random
CMV matrix. If $s_n \not \rightarrow 0$ as $n \rightarrow \infty$
then $\Sigma_{\ac}(\cmv(\omega))= \emptyset$ almost surely.
\end{theorem}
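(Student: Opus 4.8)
The plan is to combine Theorem~\ref{Remling} with the rigidity of reflectionless operators from Lemma~\ref{deterministic}, using the hypotheses on $\{X_n\}$ and $\{s_n\}$ only to manufacture, almost surely, two distinct right limits of $\cmv(\omega)$ that agree on the negative half-line. Concretely, I would fix a generic $\omega$ and argue by contradiction: suppose $|\Sigma_{\ac}(\cmv(\omega))|>0$. If I can exhibit two right limits $\EC^{(1)}$ and $\EC^{(2)}$ of $\cmv(\omega)$, with whole-line Verblunsky coefficients $\{\gamma^{(1)}_n\}$ and $\{\gamma^{(2)}_n\}$ satisfying $\gamma^{(1)}_n=\gamma^{(2)}_n$ for all $n<0$ but $\gamma^{(1)}_0\neq\gamma^{(2)}_0$, then Theorem~\ref{Remling} forces both $\EC^{(1)}$ and $\EC^{(2)}$ to be reflectionless on the positive-measure set $\Sigma_{\ac}(\cmv(\omega))$, whereupon Lemma~\ref{deterministic} forces $\gamma^{(1)}_n=\gamma^{(2)}_n$ for all $n$, contradicting $\gamma^{(1)}_0\neq\gamma^{(2)}_0$. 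Hence $|\Sigma_{\ac}(\cmv(\omega))|=0$, i.e.\ $\Sigma_{\ac}(\cmv(\omega))=\emptyset$, for a.e.\ $\omega$.

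It remains to build these two right limits. Since $s_n\not\to0$, fix $\de>0$ and a sequence $n_k\to\infty$ with $|s_{n_k}|\ge\de$; after thinning (a diagonal/compactness argument, using boundedness of $\{\alpha_n\}$ and $\{s_n\}$) I may assume the deterministic limits $\hat\alpha_j:=\lim_k\alpha_{n_k+j}$ and $\hat s_j:=\lim_k s_{n_k+j}$ exist for every $j\in\Z$, with $|\hat s_0|\ge\de>0$. Let $K$ denote the (compact) topological support of the common law of the $X_n$; by hypothesis $K$ contains two distinct points $y_1\neq y_2$. The key probabilistic input is an orbit-closure statement: because the $X_n$ are i.i.d.\ with support $K$, a standard Borel--Cantelli argument (thinning $n_k$ so that the windows $[n_k-L,n_k+L]$ are disjoint, hence the window vectors $(X_{n_k+j})_{|j|\le L}$ are i.i.d.\ with full support $K^{2L+1}$, then diagonalizing over $L$) shows that almost surely every two-sided target $\xi\in K^{\Z}$ is realized as $X_{n_{k_m}+j}\to\xi_j$ $(j\in\Z)$ along some sub-subsequence. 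I would apply this to two targets $\xi^{(1)},\xi^{(2)}\in K^{\Z}$ chosen with $\xi^{(1)}_j=\xi^{(2)}_j$ for $j<0$ and $\xi^{(1)}_0=y_1\neq y_2=\xi^{(2)}_0$ (choosing them, if necessary, so the resulting coefficients lie in $\D$). Since $\gamma^{(i)}_j=\lim_m\beta_{n_{k_m}+j}(\omega)=\hat\alpha_j+\hat s_j\xi^{(i)}_j$, the two resulting right limits agree for $j<0$ while $\gamma^{(1)}_0-\gamma^{(2)}_0=\hat s_0(y_1-y_2)\neq0$, exactly as required.

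The hypotheses enter precisely here: $s_n\not\to0$ yields $\hat s_0\neq0$, and the non-degeneracy of the law of $X$ supplies the pair $y_1\neq y_2$; without either, the mismatch at the origin would collapse and no information could be extracted. The main obstacle I anticipate is the probabilistic construction of the previous paragraph, in particular producing a single sub-subsequence along which $X_{n_{k_m}+j}$ converges simultaneously for all $j\in\Z$ to a prescribed target with the required infinite left tail, while preserving convergence of the deterministic parts. This is where care is needed: the independence used in Borel--Cantelli demands disjoint windows, so one must interleave the window-thinning with the diagonalization over the window size $L$. Once this is in place, the rigidity step (Theorem~\ref{Remling} together with Lemma~\ref{deterministic}) is immediate, and intersecting the two almost-sure events for $\xi^{(1)}$ and $\xi^{(2)}$ gives the conclusion for a.e.\ $\omega$.
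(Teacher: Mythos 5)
Your proposal is correct and follows essentially the same route as the paper: use Borel--Cantelli (on disjoint windows around a subsequence where $|s_{n_k}|$ stays bounded away from zero) to produce, almost surely, two right limits that agree for $n<0$ but differ at $n=0$, and then invoke Theorem~\ref{Remling} together with Lemma~\ref{deterministic} to rule out a set of positive measure on which both are reflectionless. The only cosmetic difference is that the paper first applies a Kolmogorov $0$--$1$ law (Lemma~\ref{deterministic-ac}) to replace $\Sigma_{\ac}(\cmv(\omega))$ by a deterministic set, whereas you argue directly for a.e.\ fixed $\omega$; both are fine.
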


We first need a lemma:
\begin{lemma}\label{deterministic-ac}
Let $\{\beta_n(\omega)\}_{n=1}^\infty$ be a sequence of independent
random Verblunsky coefficients and let $\cmv(\omega)$ be the
corresponding family of CMV matrices. Then there exists a set $A
\subseteq \partial \D$ such that with probability one,
$\Sigma_{\ac}(\cmv(\omega))=A$.
\end{lemma}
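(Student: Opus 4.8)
The plan is to recognize $\Sigma_{\ac}(\cmv(\omega))$, viewed as an element of the measure algebra of $(\partial\D,d\theta)$ (a Borel set modulo Lebesgue-null sets), as a quantity measurable with respect to the tail $\sigma$-algebra of the independent sequence $\{\beta_n(\omega)\}$, and then to invoke Kolmogorov's $0$--$1$ law. The essential support $\Sigma_{\ac}(\cmv(\omega))=\{\theta\mid w(\theta,\omega)>0\}$ is read off from the a.c.\ density $w(\cdot,\omega)$ of $\mu$, the spectral measure of $\cmv(\omega)$ and the cyclic vector $\delta_0$.

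First I would encode this set-valued object by countably many scalars. Since $\omega\mapsto\cmv(\omega)$ is measurable (the $\beta_n$ are measurable, and the matrix entries are continuous in them), the standard measurability of spectral data lets one choose the density $w(\theta,\omega)$ to be jointly measurable in $(\theta,\omega)$; hence $\mathbf 1_{\Sigma_{\ac}(\cmv(\omega))}(\theta)=\mathbf 1_{\{w(\theta,\omega)>0\}}$ is jointly measurable. For each $k\in\Z$ set
$$
c_k(\omega)=\int_{\Sigma_{\ac}(\cmv(\omega))} e^{-ik\theta}\,\f{d\theta}{2\pi}.
$$
By Fubini each $c_k$ is a bounded measurable function of $\omega$, and the family $\{c_k(\omega)\}_{k\in\Z}$ determines the class of $\Sigma_{\ac}(\cmv(\omega))$ modulo Lebesgue-null sets.

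The crucial step is tail-measurability. Altering the finitely many coefficients $\beta_1,\dots,\beta_N$ changes only finitely many entries of the $5$-diagonal matrix $\cmv(\omega)$, so it is a finite-rank, hence trace-class, perturbation. By the Kato--Rosenblum theorem for unitary operators the a.c.\ parts remain unitarily equivalent, and the essential support of the a.c.\ spectrum is a unitary invariant of the a.c.\ part; therefore $\Sigma_{\ac}(\cmv(\omega))$ is unchanged modulo null sets. Consequently each $c_k(\omega)$ is independent of $\beta_1,\dots,\beta_N$, i.e.\ measurable with respect to $\sigma(\beta_{N+1},\beta_{N+2},\dots)$; since $N$ is arbitrary, $c_k$ is measurable with respect to the tail $\sigma$-algebra $\bigcap_N\sigma(\beta_{N+1},\dots)$.

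Finally, as the $\beta_n$ are independent, Kolmogorov's $0$--$1$ law forces every tail-measurable random variable to be almost surely constant, so for each $k$ there is a constant $\widehat c_k$ with $c_k(\omega)=\widehat c_k$ almost surely. Intersecting the countably many full-measure events on which $c_k(\omega)=\widehat c_k$ yields a single full-measure event on which all Fourier coefficients of $\mathbf 1_{\Sigma_{\ac}(\cmv(\omega))}$ are prescribed; hence $\mathbf 1_{\Sigma_{\ac}(\cmv(\omega))}$ equals a fixed indicator $\mathbf 1_A$ in $L^1(\partial\D)$, that is, $\Sigma_{\ac}(\cmv(\omega))=A$ modulo null sets almost surely. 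I expect the main obstacle to be the measurability bookkeeping — securing a jointly measurable density and thereby a legitimate countable family of scalar tail random variables faithfully encoding the equivalence class $\Sigma_{\ac}$ — rather than the application of the $0$--$1$ law, which is immediate once tail-measurability is established.
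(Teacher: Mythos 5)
Your argument is exactly the one the paper uses (and attributes to Jak\v si\'c--Last): stability of the absolutely continuous spectrum under finite-rank perturbations makes $\Sigma_{\ac}(\cmv(\omega))$ a tail-measurable object, and Kolmogorov's $0$--$1$ law then forces it to be almost surely constant. The paper only sketches this and refers to \cite{Jak-Last} for details; your Fourier-coefficient encoding and measurability bookkeeping are a correct way of filling in those details.
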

\begin{proof}
This is the CMV version of a theorem of Jak\v si\'c and Last
\cite[Cor.\ 1.1.3]{Jak-Last} for Jacobi matrices. The proof is the
same: Since the absolutely continuous spectrum is stable under
finite rank perturbations, it is easily seen to be a tail event.
Thus, the result is implied by Kolmogorov's 0-1 Law. For details see
\cite{Jak-Last}.
\end{proof}

\begin{proof}[Proof of Theorem \ref{random}]
Pick a sequence, $n_j$, such that $\lim_{j \rightarrow \infty}
s_{n+n_j} \equiv S_n$ exists for every $n \in \Z$ and $S_0 \neq 0$. Such a
sequence exists by the assumptions on $s_n$. By restricting to a
subsequence, we may assume that $\lim_{j \rightarrow
\infty}\alpha_{n+n_j}+s_{n+n_j}=\widetilde{\alpha}_n+S_n$ also
exists for any $n \in \Z$.

Let $X_1 \neq X_2$ be two points in the support of the common distribution of
$X_n(\omega)$. By the Borel-Cantelli Lemma, with probability one, there exist
two subsequences $n_{j_k(\omega)}$ and $n_{j_l(\omega)}$, such that $$\lim_{k
\rightarrow \infty}X_{n+n_{j_k(\omega)}}(\omega)= X_1,  \quad n \in \Z$$ and
$$\lim_{l \rightarrow \infty}X_{n+n_{j_l(\omega)}}(\omega)=\begin{cases}
X_1 & n <0, \\
 X_2 & n \geq 0.
\end{cases}
$$

Let $\EC_1$ and $\EC_2$ be the two whole-line CMV matrices corresponding to the
sequences $$\widetilde{\beta}_n^1 \equiv \widetilde{\alpha}_n+S_nX_1, \quad n
\in \Z$$ and
$$
\widetilde{\beta}_n^2=\begin{cases}
\widetilde{\alpha}_n+S_nX_1 & n <0,\\
\widetilde{\alpha}_n+S_nX_2 & n \geq 0.
\end{cases}
$$
respectively. If it were not true that $\Sigma_{\ac}(\cmv(\omega))=\emptyset$
almost surely, then by Lemma \ref{deterministic-ac}, the essential support of
the absolutely continuous spectrum would be some deterministic set $A \neq
\emptyset$. By Theorem~\ref{Remling}, since $\EC_1$ and $\EC_2$ are both right
limits of CMV matrices with absolutely continuous spectrum on $A$, they are
both reflectionless on $A$. This contradicts Lemma~\ref{deterministic} so we
see that $\Sigma_{\ac}(\cmv(\omega))= \emptyset$ almost surely.
\end{proof}

\section{Reflectionless Matrices and Reflectionless Measures}

In this section we verify that Example~\ref{example} is the only
example where the two notions of reflectionless (cf.\ Definitions
\ref{reflectionless} and \ref{measure-reflectionless}) are not
equivalent. As an application of this fact we give a short proof of
Theorem \ref{Khrushchev}, a special case of Khrushchev's results.

We start by introducing the whole-line unitary 5-diagonal CMV matrix $\EC$ associated to $\{\alpha_n\}_{n \in \Z}$ by
\begin{align} \lb{whole-line-cmv}
\EC = \begin{pmatrix} \ddots &&\hspace*{-8mm}\ddots
&\hspace*{-10mm}\ddots &\hspace*{-12mm}\ddots
&\hspace*{-14mm}\ddots &&&
\raisebox{-3mm}[0mm][0mm]{\hspace*{-6mm}{\Huge $0$}}
\\
&0& \ol\al_{0}\rho_{-1} & -\ol\al_{0}\al_{-1} &
\ol\al_{1}\rho_{0} & \rho_{1}\rho_{0}
\\
&& \rho_{0}\rho_{-1} &-\rho_{0}\al_{-1} &
-\ol\al_{1}\al_{0} & -\rho_{1}\al_{0} & 0
\\
&&&0& \ol\al_{2}\rho_{1} & -\ol\al_{2}\al_{1} &
\ol\al_{3}\rho_{2} & \rho_{3}\rho_{2}
\\
&&\raisebox{-5mm}[0mm][0mm]{\hspace*{-6mm}{\Huge $0$}} && \rho_{2}\rho_{1} &
-\rho_{2}\al_{1} & -\ol\al_{3}\al_{2} & -\rho_{3}\al_{2} & 0
\\
&&&&&\hspace*{-14mm}\ddots &\hspace*{-14mm}\ddots
&\hspace*{-14mm}\ddots &\hspace*{-8mm}\ddots &\ddots
\end{pmatrix},
\end{align}
where $\rho_n = (1 - |\alpha_n|^2)^{1/2}$.  Here the diagonal
elements are given by $\EC_{n,n}=-\ol\al_n\al_{n-1}$. It is known
\cite{CMV, OPUC1} that CMV matrices have the following $\cL\cM$
factorization:
\begin{align} \lb{LM}
\EC = \cL\cM,
\end{align}
where
\begin{equation*}
\cL = \begin{pmatrix} \ddots & & &
\raisebox{-3mm}[0mm][0mm]{\hspace*{-5mm}\Huge $0$}  \\
& \te_{2k-2} & & \\ & & \te_{2k} & & \\
& \raisebox{0mm}[0mm][0mm]{\hspace*{-10mm}\Huge $0$} & & \ddots
\end{pmatrix}, \quad\quad
\cM = \begin{pmatrix} \ddots & & &
\raisebox{-3mm}[0mm][0mm]{\hspace*{-5mm}\Huge $0$} \\
& \te_{2k-1} &  &  \\ &  & \te_{2k+1} &  & \\
& \raisebox{0mm}[0mm][0mm]{\hspace*{-10mm}\Huge $0$} & & \ddots
\end{pmatrix},
\end{equation*}
such that, for all $k \in \Z$
\begin{gather*}
\begin{pmatrix}
\cL_{2k,2k} & \cL_{2k,2k+1} \\ \cL_{2k+1,2k} & \cL_{2k+1,2k+1}
\end{pmatrix} =  \te_{2k},
\quad\quad
\begin{pmatrix}
\cM_{2k-1,2k-1} & \cM_{2k-1,2k} \\ \cM_{2k,2k-1} & \cM_{2k,2k}
\end{pmatrix} =  \te_{2k-1},
\\
\te_k = \begin{pmatrix}\ol\al_k & \rho_k \\ \rho_k & -\al_k\end{pmatrix}.
\end{gather*}

Next, we introduce the diagonal Schur function $f(z,n)$ associated with the
diagonal spectral measure $\mu_n$ (i.e., the spectral measure of $\EC$ and
$\de_n$) by
\begin{equation*}
\f{1+zf(z,n)}{1-zf(z,n)} =
\int_{0}^{2\pi}\f{e^{i\te}+z}{e^{i\te}-z}\,d\mu_n(\te) =
\langle\de_n,(\EC+zI)(\EC-zI)^{-1}\de_n\rangle, \quad n\in\Z.
\end{equation*}
Then it follows from Definition \ref{measure-reflectionless} that the measure
$\mu_n$ is reflectionless on $A\subseteq\dD$ if and only if $\Im(zf(z,n))=0$
for a.e.\ $z\in A$.  Recall from \eqref{diag-schur} that the diagonal and half-line Schur functions are related by
$$
f(z,n)=f_+(z,n)f_-(z,n).
$$
Thus, if a whole-line CMV matrix, $\EC$, is reflectionless on $A$,
it follows that all diagonal measures $\mu_n$, $n\in\Z$, are
reflectionless on $A$ as well. As indicated in
Example~\ref{example}, the converse is not true in general.
Nevertheless, one can show that this example is the only exceptional
case: whenever all $\al$'s are identically zero, or there are at
least two nonzero $\al$'s, the converse holds. We start by showing
that a reflectionless CMV matrix $\EC$ with a finite number of
nonzero $\al$'s can have no more than one nonzero Verblunsky
coefficient.

\begin{lemma} \lb{many-al}
Suppose $\EC$ is a whole-line CMV matrix of the form \eqref{whole-line-cmv}
such that $\al_m\neq0$, $\al_n\neq0$ for some $m,n\in\Z$, $m<n$, and $\mu_n$ is
reflectionless on a set $A\subseteq\dD$ of positive Lebesgue measure. Then
there are infinitely many nonzero $\al$'s.
\end{lemma}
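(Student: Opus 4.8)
The plan is to argue by contradiction. Suppose $\EC$ has only finitely many nonzero Verblunsky coefficients, yet $\al_m\neq0$, $\al_n\neq0$ for some $m<n$ and $\mu_n$ is reflectionless on $A$ with $|A|>0$. Since only finitely many $\al$'s are nonzero, the two Schur functions $f_+(z,n)=f(z;\al_n,\al_{n+1},\dots)$ and $f_-(z,n)=f(z;-\ol{\al_{n-1}},-\ol{\al_{n-2}},\dots)$ are each generated by a sequence of Schur parameters with only finitely many nonzero terms; by the Schur algorithm each is therefore a rational function of $z$, and so is the diagonal Schur function $f(z,n)=f_+(z,n)f_-(z,n)$ from \eqref{diag-schur}.

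First I would turn the reflectionless hypothesis into a global identity. Recall that $\mu_n$ reflectionless on $A$ means $\Im\bigl(zf(z,n)\bigr)=0$ for a.e.\ $z\in A$. Setting $f^*(z):=\ol{f(1/\ol z,\,n)}$, which is again rational in $z$, and using $1/\ol z=z$ on $\dD$, the statement ``$zf(z,n)$ is real for a.e.\ $z\in A$'' becomes $z^2f(z,n)=f^*(z)$ for a.e.\ $z\in A$. As $|A|>0$, the set $A$ has an accumulation point, so these two rational functions must agree everywhere:
\[
z^2f(z,n)=f^*(z)\qquad\text{on all of }\C.
\]

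Next I would compare the two sides near $z=0$. Because $f(z,n)$ is a Schur function, $f(0,n)$ is finite, so the left-hand side vanishes at the origin (to order at least $2$). On the right-hand side $f^*(0)=\ol{f(\infty,n)}$, so it suffices to show that $f(z,n)$ does \emph{not} tend to $0$ as $z\to\infty$; this is exactly where the hypothesis of two nonzero coefficients is used. Tracking degrees through the Schur recursion, the numerator and denominator of $f_+(z,n)$ have equal degree (here $\al_n\neq0$ is essential), so $f_+(z,n)$ has a finite nonzero limit as $z\to\infty$. Setting $m'=\max\{k<n:\al_k\neq0\}$, which exists because $\al_m\neq0$ and $m<n$, the sequence defining $f_-$ begins with exactly $n-1-m'$ zeros, whence $f_-(z,n)=z^{\,n-1-m'}g(z)$ with $g(0)$ and $g(\infty)$ finite and nonzero; thus $f_-(z,n)$ grows like $z^{\,n-1-m'}$ and in particular does not vanish at $\infty$. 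Hence $f(z,n)=f_+(z,n)f_-(z,n)$ does not vanish at $\infty$, so $f^*$ has order $\leq0$ at the origin, contradicting the vanishing of the left-hand side there. This contradiction proves that the number of nonzero $\al$'s must be infinite.

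The main obstacle is the behavior-at-infinity bookkeeping in the third step. One has to control the degrees of the numerator and denominator of the rational functions $f_\pm(z,n)$, and in particular handle the case $\al_{n-1}=0$, in which $f_-$ starts with a block of zeros and contributes a factor $z^{\,n-1-m'}$, so that $f^*$ acquires a pole, rather than a nonzero value, at the origin. I would establish the needed degree statements directly from the recursion $A_j=\ga_jB_{j+1}+zA_{j+1}$, $B_j=B_{j+1}+\ol{\ga_j}\,zA_{j+1}$ for the numerator $A_0$ and denominator $B_0$ of a finite Schur function: this gives $\deg A_0=\deg B_0$ precisely when the first parameter $\ga_0$ is nonzero, which is what produces the nonzero limits at $\infty$. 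The remaining verifications are routine.
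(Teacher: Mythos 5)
Your argument is correct, and while it shares its endpoints with the paper's proof --- both begin by observing that finitely many nonzero $\al$'s force $f_\pm(z,n)$, hence $f(z,n)=f_+(z,n)f_-(z,n)$, to be rational, and both derive the final contradiction from the behaviour of $zf(z,n)$ at the origin --- the middle step is genuinely different. The paper passes to $\log(zf(z,n))$ on a neighbourhood of $\dD$ minus a small neighbourhood of the boundary zeros, notes that its imaginary part takes values in $\{0,\pi\}$ a.e.\ on $A$, and argues via the Cauchy--Riemann equations that $\f{d}{dz}\log(zf(z,n))$ vanishes identically, so that $zf(z,n)$ is a nonzero constant, which is incompatible with $f(\cdot,n)$ being analytic at $0$. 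You instead convert the reflectionless condition into the Schwarz-reflection identity $z^2f(z,n)=f^*(z)$ with $f^*(z)=\ol{f(1/\ol z,n)}$, propagate it to all of $\C$ by the identity theorem for rational functions, and then compare orders at $0$ and $\infty$. Each route has something to recommend it: the paper's conclusion that $zf(z,n)$ is \emph{constant} is stronger and makes the final contradiction immediate, but it leans on a rather delicate rigidity claim about the log-derivative (note, e.g., that $g(z)=z+1/z$ has $\Im\log g\in\{0,\pi\}$ on all of $\dD$ away from its zeros while $(\log g)'\not\equiv 0$, so the vanishing of the derivative cannot follow from the boundary condition alone and must use more of the structure); your reflection identity is weaker but entirely elementary, and it has the virtue of making completely explicit where the hypothesis of two nonzero coefficients enters --- namely, $\al_n\neq 0$ forces $f_+$ to have a finite nonzero value at infinity and $\al_m\neq 0$ for some $m<n$ forces $f_-$ to have order $\le 0$ at infinity, so $f(\cdot,n)$ cannot vanish at infinity to the order $\ge 2$ that the identity demands. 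The price you pay is the degree bookkeeping for the numerators and denominators $A_0,B_0$ in the finite Schur algorithm, which you correctly reduce to the recursion $A_j=\ga_jB_{j+1}+zA_{j+1}$, $B_j=B_{j+1}+\ol{\ga_j}zA_{j+1}$ and the observation that $\deg A_0\geq\deg B_0$ with equality exactly when $\ga_0\neq0$. I see no gap in your argument.
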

\begin{proof}
Assume that there are finitely many nonzero $\al$'s. Then it follows
from the Schur algorithm that the Schur functions $f_\pm(z,n)$ are
rational functions of $z$ with finitely many zeros in $\ol\D$ and
poles in $\C\setminus\ol\D$. By \eqref{diag-schur} the same holds
for $f(z,n)$.

Let $B$ be a neighborhood of the \emph{finite} set of zeros of
$f(z,n)$ on $\partial \D$ such that $A \setminus B$ has positive
Lebesgue measure. Then $\log(zf(z,n))$ is a well-defined analytic
function on some open neighborhood of $\dD \setminus B$.

The reflectionless assumption implies that
$\Im\log(zf(z,n))\in\{0,\pi\}$ Lebesgue a.e.\ on $A \setminus B$,
and hence by the Cauchy--Riemann equations, the analytic function
$\f{d}{dz}\log(zf(z,n))$ is zero on accumulation points of $A
\setminus B$. Since $A \setminus B$ is of positive Lebesgue measure,
the set of its accumulation points is also of positive Lebesgue
measure, and hence $\f{d}{dz}\log(zf(z,n))$ is identically zero in
the neighborhood of $\dD \setminus B$. This implies that $zf(z,n)$
is a nonzero constant in $\D \setminus B$. This is a contradiction
since $f(z,n)$ is analytic in $\D$.
\end{proof}

\begin{proof}[Proof of Theorem~\ref{equivalence}]
Assume $\EC$ is reflectionless in the sense of Definition \ref{reflectionless}.
Then it follows from the discussion at the beginning of this section that
$\mu_n$ is reflectionless for all $n$.

Now assume $\mu_n$ is reflectionless in the sense of Definition
\ref{measure-reflectionless} for all $n$. Since two $\alpha$'s are not zero, it
follows from Lemma~\ref{many-al} that there are infinitely many nonzero
$\al$'s. Let $\al_{n_{-1}}$, $\al_{n_0}$, $\al_{n_1}$, $\al_{n_2}$, denote four
consecutive nonzero $\al$'s, that is, four non-zero values with possibly some
zero values between them.  For $n\in\Z$, introduce $g_+(z,n)=zf_+(z,n)$ and
$g_-(z,n)=\ol{f_-(z,n)}$. Then $g_\pm(z,n_j)$, $j=0,1,2$, are
not identically zero functions and it follows from the Schur algorithm that for
$z\in\dD$ and $n\in\Z$,
\begin{align} \lb{recursion}
g_\pm(z,n-1) = z\frac{g_\pm(z,n)+\al_{n-1}}{\ol\al_{n-1}g_\pm(z,n)+1}, \quad
g_\pm(z,n+1) = \frac{g_\pm(z,n)/z-\al_n}{-\ol\al_{n}g_\pm(z,n)/z+1}.
\end{align}

The reflectionless condition at $n_1$ implies
$g_+(z,n_1)\ol{g_-(z,n_1)}=zf(z,n_1)\in\R\setminus\{0\}$ for a.e.\
$z\in A$, and hence $g_\pm(z,n_1)=s_\pm(z)e^{it(z)}$ with
$s_\pm(z)\in\R$ and $t(z)\in[0,\pi)$ for a.e.\ $z\in A$. In order to
check that $\EC$ is reflectionless in the sense of Definition
\ref{reflectionless} it remains to check that $s_+(z)=s_-(z)$ for
a.e.\ $z\in A$.

By construction all $\al$'s between $\al_{n_0}$ and $\al_{n_1}$ are zero, hence
it follows from \eqref{recursion} that
\begin{equation*}
g_\pm(z,n_0) = z^{n_1-n_0}e^{it(z)}
\f{s_\pm(z)+e^{-it(z)}\al_{n_0}}{\ol\al_{n_0}e^{it(z)}s_\pm(z)+1}
\quad\text{for a.e.\ $z\in A$.}
\end{equation*}

Since for every $\ga\in\D\setminus\R$ the function
$h_\ga:[-1,1]\to(-\f{\pi}{2},\f{\pi}{2})$ defined by
\begin{align}\lb{arg-fn}
h(x)=\arg\left(\ol\ga\f{x+\ga}{\ol\ga x+1}\right)
\end{align}
is $1-1$, \eqref{recursion} and the reflectionless condition at
$n_0$ (i.e.,
$g_+(z,n_0)\ol{g_-(z,n_0)}=zf(z,n_0)\in\R\setminus\{0\}$ for a.e.\
$z\in A$) imply
\begin{equation}\lb{n0}
s_+(z)=s_-(z) \quad\text{for a.e.\ $z\in A$ such that
$e^{-it(z)}\al_{n_0}\in\D\setminus\R$.}
\end{equation}
Similarly, since by construction all $\al$'s between $\al_{n_1}$ and
$\al_{n_2}$ are zero, it follows from \eqref{recursion} that
\begin{equation*}
g_\pm(z,n_2) =
z^{n_1-n_2}e^{it(z)}\frac{s_\pm(z)-e^{-it(z)}z^{n_2-n_1}\al_{n_1}}
{-\ol\al_{n_1}e^{it(z)}z^{n_1-n_2}s_\pm(z)+1} \quad\text{for a.e.\ $z\in A$,}
\end{equation*}
and hence, the reflectionless condition at $n_2$ (i.e.,
$g_+(z,n_2)\ol{g_-(z,n_2)}=zf(z,n_2)\in\R\setminus\{0\}$ for a.e.\
$z\in A$), together with the injectivity of $h_\ga$, implies
\begin{equation} \lb{n2}
s_+(z)=s_-(z) \quad\text{for a.e.\ $z\in A$ such that
$e^{-it(z)}z^{n_2-n_1}\al_{n_2}\in\D\setminus\R$.}
\end{equation}
Since $e^{-it(z)}\al_{n_0}\in\R$ and $e^{-it(z)}z^{n_2-n_1}\al_{n_2}\in\R$ may
hold simultaneously only on a finite set, it follows from \eqref{n0} and
\eqref{n2} that
\begin{equation*}
s_+(z)=s_-(z) \quad\text{for a.e.\ $z\in A$},
\end{equation*}
and hence $g_+(z,n_1)=g_-(z,n_1)$ for a.e.\ $z\in A$.  That is, $\EC$ is
reflectionless on $A$ according to Definition \ref{reflectionless}.
\end{proof}

\begin{remark} We note that the case of identically zero Verblunsky
coefficients corresponds to $f_\pm(z,n)\equiv0$ for all $n\in\Z$, so that the
associated CMV matrix is reflectionless on $\dD$ in the sense of Definition
\ref{reflectionless} and hence all its diagonal spectral measures are
reflectionless on $\dD$ in the sense of Definition
\ref{measure-reflectionless}. The case of a single nonzero coefficient,
discussed in Example~\ref{example}, corresponds to one of $f_+(z,n)$ or
$f_-(z,n)$ being nonzero and the other being identically zero for each
$n\in\Z$, so that the associated CMV matrix is not reflectionless on any subset
of $\dD$ of positive Lebesgue measure, yet all the diagonal measures are
reflectionless on $\dD$.
\end{remark}

\begin{proof}[Proof of Theorem~\ref{Khrushchev}]
Let $\EC$ be a right limit of $\cmv$. Then all the diagonal measures
of $\EC$ are identical and equal to $\f{d\te}{2\pi}$. The
corresponding diagonal Schur functions in this case are
$f(z,n)\equiv0$ for all $n\in\Z$. Hence by \eqref{diag-schur} for
each $n\in\Z$ either $f_-(z,n)\equiv0$ or $f_+(z,n)\equiv0$ or both.
The latter case corresponds to $\EC$ having identically zero
Verblunsky coefficients and the other two cases correspond to $\EC$
having exactly one nonzero Verblunsky coefficient. Since this holds
for all right limits of $\cmv$, we conclude that for all $k\in\N$,
\begin{align} \lb{n+k}
\lim_{n \rightarrow \infty}\alpha_n \alpha_{n+k}=0.
\end{align}
Conversely, \eqref{n+k} implies that all right limits of $\cmv$ may have at
most one nonzero Verblunsky coefficient. Hence all right limits of $\cmv$ have
identical diagonal spectral measures equal to $\f{d\te}{2\pi}$. This implies
that the diagonal measures $d\mu_n$ of $C$ converge weakly to $\f{d\te}{2\pi}$
as $n\to\infty$.

The final statement follows since if $\alpha_n \not \rightarrow 0$ then clearly
\eqref{mate-nevai-sparse} holds, which implies, by Theorem \ref{sparse}, that
$\mu$ is purely singular.
\end{proof}

\section{The Simon and Khrushchev Classes}

In this section we extend the discussion of the previous section to
include all cases where $\mu_n$ has a weak limit. We consider both
the Jacobi and CMV cases, but begin with the Jacobi case since it is
technically simpler.

\subsection{The Simon Class}

A half-line Jacobi matrix is a semi-infinite matrix of the form:
\begin{equation} \label{half-line-Jacobi} J \left(
\{a_n,b_n\}_{n=1}^\infty \right)=\left(
\begin{array}{ccccc}
b_1    & a_1 & 0      & 0      & \dots \\
a_1    & b_2 & a_2    & 0      & \dots \\
0      & a_2 & b_3    & a_3    & \dots \\
\dots & \dots   & \dots & \dots & \dots \\
\end{array} \right).
\end{equation}

Its whole-line counterpart is defined by
\begin{equation}\lb{whole-line-Jacobi}
H \left( \{a_n,b_n\}_{n\in \Z} \right) = \left (
\begin{array}{ccccccc} \ddots & \ddots & \ddots & & & &
\raisebox{-3mm}[0mm][0mm]{\hspace*{-5mm}\Huge $0$}\\
& a_{-1} & b_0 & a_0 \\
& & a_0 & b_1 & a_1 \\
& & & a_1 & b_2 & a_2 \\
& \raisebox{3mm}[0mm][0mm]{\hspace*{-10mm}\Huge $0$}
& & & \ddots & \ddots & \ddots \\
\end{array} \right), \end{equation}
(we assume $b_n \in \R, \ a_n\geq0 $ in both cases).

These matrices may be viewed as operators on $\ell^2(\N)$ and
$\ell^2(\Z)$, respectively, and both are clearly symmetric. As is
well known, the theory of half-line Jacobi matrices with $a_n>0$ is
intimately related to that of orthogonal polynomials on the real
line (OPRL) through the recursion formula for the polynomials (see
e.g.\ \cite{Simon-Moment}). In particular, in the self-adjoint case
(to which we shall henceforth restrict our discussion), $\mu$---the
spectral measure for $J$ and $\delta_1$---is the unique solution to
the corresponding moment problem.  Furthermore,
$d\mu_n(x)=|p_{n-1}(x)|^2 d\mu(x)$ is the spectral measure for $J$
and $\delta_n$, where $\{p_n(x)\}$ are the orthonormal polynomials
corresponding to $\mu$. The whole-line matrices enter naturally into
this framework as right limits (see e.g.\ \cite{Remling} for the
definition).

The problem at the center of our discussion is that of the identification of
right limits of half-line Jacobi matrices with the property that $\mu_n$ has a
weak limit as $n \rightarrow \infty$. As is clear from the discussion in the
introduction, all these right limits belong to Simon Class (recall Definition
\ref{Simon Class}).

\begin{theorem}\label{Simon Class Characterization}
Let $H\left( \{a_n,b_n\}_{n\in \Z} \right)$ be a whole-line Jacobi matrix. The following are equivalent: \\
$(i)$ $H$ belongs to Simon Class. \\
$(ii)$ For all $m,n \in \Z$, $ \int x d\mu_n(x)=\int x d\mu_m(x)$
and $ \int x^2 d\mu_n(x)=\int x^2
d\mu_m(x)$.\\
$(iii)$ $a_{2n}=a, \ a_{2n+1}=c, \ b_n=b$ for some numbers, $a,c \geq 0$
and $b \in \R$.
\end{theorem}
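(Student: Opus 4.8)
The plan is to prove the equivalence by establishing the cycle $(iii) \Rightarrow (i) \Rightarrow (ii) \Rightarrow (iii)$, since this ordering puts the easy implications first and isolates the real content in the final step. The implication $(iii) \Rightarrow (i)$ is essentially a translation-invariance observation: a Jacobi matrix with $a_{2n}=a$, $a_{2n+1}=c$, $b_n=b$ has a period-$2$ structure, but the relevant spectral measures $\mu_n$ depend only on the diagonal Green's function $\langle \delta_n, (H-x)^{-1}\delta_n\rangle$, and the symmetry $a_{2n}=a$, $a_{2n+1}=c$ combined with $b_n=b$ constant makes every site equivalent to every other under the natural shift-and-reflect symmetries of $H$, so all the $\mu_n$ coincide. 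The implication $(i) \Rightarrow (ii)$ is trivial: if $\mu_n=\mu_m$ as measures then in particular all their moments agree, so certainly the first two moments do.

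The heart of the matter is $(ii) \Rightarrow (iii)$, and this is where I expect the main obstacle to lie. The idea is to compute the low-order moments $\int x\,d\mu_n$ and $\int x^2\,d\mu_n$ explicitly in terms of the Jacobi parameters. Since $\mu_n$ is the spectral measure of $H$ and $\delta_n$, its moments are the diagonal matrix elements of powers of $H$: one has $\int x\,d\mu_n = \langle \delta_n, H\delta_n\rangle = b_n$ and $\int x^2\,d\mu_n = \langle \delta_n, H^2\delta_n\rangle = a_{n-1}^2 + b_n^2 + a_n^2$ (with the usual indexing convention for the whole-line matrix \eqref{whole-line-Jacobi}). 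The hypothesis that the first moment is constant in $n$ immediately forces $b_n \equiv b$ for some fixed $b \in \R$. Feeding this back into the second-moment condition, constancy of $a_{n-1}^2 + b^2 + a_n^2$ in $n$ forces $a_{n-1}^2 + a_n^2$ to be independent of $n$, i.e. $a_{n-1}^2 + a_n^2 = a_n^2 + a_{n+1}^2$ for all $n$, whence $a_{n+1}^2 = a_{n-1}^2$. Thus the sequence $a_n^2$ is constant on even indices and constant on odd indices; taking nonnegative square roots (recall $a_n \geq 0$) gives $a_{2n}=a$ and $a_{2n+1}=c$ for constants $a,c \geq 0$, which is exactly $(iii)$.

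The only subtlety to watch is the bookkeeping of the index shift between the diagonal position $n$ and the off-diagonal entries $a_{n-1}, a_n$ flanking it, so that the recursion $a_{n+1}^2 = a_{n-1}^2$ emerges with the correct parity; I would fix the convention directly from \eqref{whole-line-Jacobi} at the outset to avoid an off-by-one error. No deeper analysis (no Green's functions, Schur functions, or reflectionless machinery) is needed for $(ii) \Rightarrow (iii)$ — it is purely a moment computation — which is precisely why arranging the cycle to end here is efficient. The genuinely structural fact, that this period-$2$ family is exactly the Simon Class, then follows for free from the first implication.
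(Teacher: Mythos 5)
Your proof is correct and follows essentially the same route as the paper: $(i)\Rightarrow(ii)$ is trivial in both, and your $(ii)\Rightarrow(iii)$ is the identical moment computation ($\int x\,d\mu_n=b_n$, $\int x^2\,d\mu_n=a_{n-1}^2+b_n^2+a_n^2$, forcing $b_n\equiv b$ and then $a_{n+1}^2=a_{n-1}^2$). For $(iii)\Rightarrow(i)$ you invoke the shift-and-reflect symmetry in the abstract, whereas the paper unpacks exactly that symmetry concretely, taking the $\ell^2$-at-$+\infty$ solution $u$, noting that $v(n)=u(1-n)$ is the $\ell^2$-at-$-\infty$ solution (this is the reflection about $n=\tfrac12$ preserving $H$), and reading $\langle\delta_0,(H-z)^{-1}\delta_0\rangle=\langle\delta_1,(H-z)^{-1}\delta_1\rangle$ off the Green's function formula; to make your version airtight you need only record that the unitary $(U\psi)(n)=\psi(1-n)$ satisfies $UHU^{-1}=H$ (using $a_{-n}=a_n$ and $a_{1-n}=a_{n-1}$, which hold by the period-two structure) and $U\delta_0=\delta_1$, so that $\mu_0=\mu_1$, with the shift by $2$ giving the rest.
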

\begin{remark}
In particular, this shows that if $H$ has constant first and second
moments, then $H$ belongs to Simon Class. Note, however, that the
values of these moments do not determine the element of the class
itself (not even up to translation; see \eqref{first-moment} and
\eqref{second-moment} below). Thus, it makes sense to define
$\mathcal{S}(A,B)$ to be the set of all matrices in the Simon Class
having $a^2+c^2=A$ and $b=B$, where $a,b,c$ are as in $(iii)$ above.
\end{remark}

\begin{proof}
$(i) \Rightarrow (ii)$ is trivial. \\
$(ii) \Rightarrow (iii)$: Noting that
\begin{equation} \label{first-moment}
\int x d\mu_n(x)=b_n
\end{equation} and
\begin{equation} \label{second-moment}
\int x^2 d\mu_n(x)=a_{n-1}^2+b_n^2+a_n^2,
\end{equation}
the result follows immediately
for the diagonal elements. With this in hand, comparing the second
moment of $\mu_n$ and $\mu_{n+1}$ we see that
$a_{n-1}^2+a_n^2=a_n^2+a_{n+1}^2$ from which it follows that
$a_{n-1}=a_{n+1}$, and we are done.

$(iii) \Rightarrow (i)$: Clearly, by symmetry, $\mu_{n}=\mu_{n+2}$
for all $n$, and if $a=c$ also $\mu_n=\mu_{n+1}$. Thus, we are left with showing $\mu_0=\mu_1$ under
the assumption $a \neq c$ (so at least one is nonzero). We shall
show that for any $z \in \C_+$, $$\int \frac{d\mu_0(x)}{x-z}=\int
\frac{d\mu_1(x)}{x-z}.$$

Fix $z \in \C_+$ and let $\{ u(n) \}$ be a sequence satisfying
$a_n u(n+1)+b_n u(n)+a_{n-1}u(n-1)=zu(n)$ that is $\ell^2$ at
$\infty$. This sequence is unique up to a constant factor. By the
symmetry of $H$, note that $v(n) \equiv u(1-n)$ satisfies the same
equation and is $\ell^2$ at $-\infty$. Now write
\begin{equation}
\nonumber \begin{split} \int \frac{d\mu_0(x)}{x-z}&=\langle
\delta_0, \left( H-z \right)^{-1} \delta_0 \rangle =
\frac{u(0)v(0)}{v(1)u(0)-v(0)u(1)} \\
&=
\frac{u(0)u(1)}{v(1)u(0)-v(0)u(1)}=\frac{v(1)u(1)}{v(1)u(0)-v(0)u(1)}
\\
& =\langle \delta_1, \left( H-z \right)^{-1} \delta_1 \rangle =\int
\frac{d\mu_1(x)}{x-z}, \end{split}
\end{equation}
from which, by standard results, it follows that $\mu_0=\mu_1$.
\end{proof}

We immediately get the following:

\begin{corollary}\label{generalized-Simon}
Let $J$ be a self-adjoint half-line Jacobi matrix and let $\mu$ be its spectral
measure. For $n \geq 1$, let $d\mu_n(x) = |p_{n-1}(x)|^2d\mu(x)$ be the
spectral measure of $J$ and $\delta_n$. If
\begin{equation} \label{first-moment-convergence}
\lim_{n \rightarrow \infty} \int x d\mu_n(x)=\tilde{B}
\end{equation}
\begin{equation} \label{second-moment-convergence}
\lim_{n \rightarrow \infty} \int x^2 d\mu_n(x)=\tilde{A}
\end{equation}
then $J$ is bounded and all right limits of $J$ are in $\mathcal{S}
(\tilde{A}-\tilde{B}^2,\tilde{B})$.
\end{corollary}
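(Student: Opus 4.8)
The plan is to reduce everything to the explicit first- and second-moment formulas \eqref{first-moment} and \eqref{second-moment} and then to quote the characterization of the Simon Class in Theorem~\ref{Simon Class Characterization}. First I would observe that these identities hold verbatim for the half-line matrix $J$ as well (with the convention $a_0=0$, which is irrelevant since we let $n\to\infty$), so that the hypotheses \eqref{first-moment-convergence} and \eqref{second-moment-convergence} translate directly into statements about the coefficients: \eqref{first-moment-convergence} says $b_n\to\tilde{B}$, and, since then $b_n^2\to\tilde{B}^2$, subtracting from \eqref{second-moment-convergence} gives $a_{n-1}^2+a_n^2\to\tilde{A}-\tilde{B}^2$. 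Boundedness of $J$ is now immediate: $\{b_n\}$ converges and hence is bounded, while $0\leq a_n^2\leq a_{n-1}^2+a_n^2$ shows that $\{a_n\}$ is bounded too. In particular $J$ genuinely possesses right limits.

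Next I would pass to an arbitrary right limit $H=H(\{a_n',b_n'\}_{n\in\Z})$, obtained along some subsequence $n_j\to\infty$ via $b_n'=\lim_j b_{n+n_j}$ and $a_n'=\lim_j a_{n+n_j}$. Because the full sequence $b_m$ converges to $\tilde{B}$, every subsequential limit equals $\tilde{B}$, so $b_n'=\tilde{B}$ for all $n$. Likewise, since $a_{m-1}^2+a_m^2\to\tilde{A}-\tilde{B}^2$ along the full sequence, taking limits along $n_j$ yields $(a_{n-1}')^2+(a_n')^2=\tilde{A}-\tilde{B}^2$ for every $n$. Feeding these back into \eqref{first-moment} and \eqref{second-moment} for $H$, I get $\int x\,d\mu_n'=\tilde{B}$ and $\int x^2\,d\mu_n'=(\tilde{A}-\tilde{B}^2)+\tilde{B}^2=\tilde{A}$, both independent of $n$.

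At this point condition $(ii)$ of Theorem~\ref{Simon Class Characterization} holds for $H$, so $H$ lies in the Simon Class and has the structure of $(iii)$: $a_{2n}'=a$, $a_{2n+1}'=c$, $b_n'=b$. Reading off the constants gives $b=b_n'=\tilde{B}$ and $a^2+c^2=(a_{n-1}')^2+(a_n')^2=\tilde{A}-\tilde{B}^2$, which is precisely the assertion $H\in\mathcal{S}(\tilde{A}-\tilde{B}^2,\tilde{B})$.

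I do not expect a serious obstacle here; the content lies entirely in the moment identities and the fact that pointwise right limits preserve the (finitely many) coefficient combinations appearing in them. The one point requiring care is the order of operations: boundedness must be established first, both because it is part of the claim and because it is what guarantees---via compactness---that right limits exist and are again bounded whole-line Jacobi matrices to which Theorem~\ref{Simon Class Characterization} applies.
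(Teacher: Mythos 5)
Your proposal is correct and follows essentially the same route as the paper: boundedness is read off from the moment identities \eqref{first-moment}--\eqref{second-moment}, one observes that every right limit has constant first and second moments, and Theorem~\ref{Simon Class Characterization} together with the definition of $\mathcal{S}(A,B)$ finishes the argument. Your version merely spells out the coefficient-level details (convergence of $b_n$ and of $a_{n-1}^2+a_n^2$) that the paper leaves implicit.
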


\begin{proof}
That $J$ is bounded follows from \eqref{first-moment} and \eqref{second-moment} applied to $J$, together
with the fact that these moments converge. Since all right limits of $J$ have constant first and second moments, it
follows from Theorem~\ref{Simon Class Characterization} that they all belong to Simon Class.
The rest follows from combining \eqref{first-moment},
\eqref{second-moment}, \eqref{first-moment-convergence} and \eqref{second-moment-convergence} together with
the definition of $\mathcal{S}(A,B)$.
\end{proof}

Corollary \ref{generalized-Simon} is closely related to Theorem 2 in
\cite{Simon-Weak}---both our assumptions and conclusions are weaker.
We also note that our proof is not that much different from the
corresponding parts in Simon's proof. However, we believe that the
``right limit point of view'' makes various ideas especially
transparent and clear. In particular, we would like to emphasize the
following subtle point.  While convergence of the first and second
moments does not imply weak convergence, by Corollary
\ref{generalized-Simon} it does imply a certain weak form of weak
convergence: it holds along any subsequence on which $J$ has a right
limit.

Also, it is now clear that any additional condition forcing weak
convergence of $\mu_n$ is equivalent to a condition that
distinguishes a particular element of
$\mathcal{S}(\tilde{A}-\tilde{B}^2,\tilde{B})$ (up to a shift).
Computing powers of $J$ shows that the third moment is not enough,
but the fourth moment is. Thus we get
\begin{theorem}[Simon \cite{Simon-Weak}]
If \eqref{first-moment-convergence} and
\eqref{second-moment-convergence} hold and $\lim_{n \rightarrow
\infty} \int x^4 d\mu_n(x)$ exists, then $J$ is bounded, $d\mu_n$
converge weakly and $J$ has a unique right limit $($up to a shift$)$
in $\mathcal{S}(\tilde{A}-\tilde{B}^2,\tilde{B})$.
\end{theorem}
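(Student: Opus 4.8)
The plan is to read off the first two conclusions from Corollary~\ref{generalized-Simon} and then use the extra hypothesis to pin down the remaining structure. Once \eqref{first-moment-convergence} and \eqref{second-moment-convergence} are assumed, Corollary~\ref{generalized-Simon} already gives that $J$ is bounded and that every right limit of $J$ lies in $\mathcal{S}(\tilde{A}-\tilde{B}^2,\tilde{B})$. Write $A=\tilde{A}-\tilde{B}^2$ and $B=\tilde{B}$. By Theorem~\ref{Simon Class Characterization} an element $H$ of this class has $b_n\equiv B$ and $\{a_{2n},a_{2n+1}\}=\{a,c\}$ with $a^2+c^2=A$; since a shift interchanges the two subsequences of off-diagonal entries, $H$ is determined up to a shift by the unordered pair $\{a^2,c^2\}$, equivalently by $A$ together with the product $a^2c^2$. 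So to obtain uniqueness of the right limit (up to a shift) it suffices to show that the limiting fourth moment determines $a^2c^2$.

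This is where I would compute. For $H\in\mathcal{S}(A,B)$ one has $\int x^4\,d\mu_n=(H^4)_{nn}=\sum_k (H^2)_{nk}^2$, and a direct expansion (using that four consecutive off-diagonal entries alternate between $a$ and $c$, so that $a_{n-1}^2+a_n^2=A$ and each product of a consecutive pair squares to $a^2c^2$) gives, independently of $n$,
\[
(H^4)_{nn} = 2a^2c^2 + 4B^2 A + (A+B^2)^2.
\]
Thus within $\mathcal{S}(A,B)$ the fourth moment depends on the element only through $a^2c^2$. By contrast, the same computation yields $(H^3)_{nn}=3BA+B^3$, which depends only on $A$ and $B$; this makes precise the assertion in the preceding remark that the third moment carries no new information while the fourth does.

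To transfer this to $J$, fix any right limit $H$, obtained along $n_j\to\infty$. Since $J^4$ is banded, its diagonal entry at index $m$ depends only on finitely many $a_l,b_l$ with $l$ near $m$, so $(J^4)_{n_j,n_j}\to (H^4)_{00}$; but $(J^4)_{nn}=\int x^4\,d\mu_n$ converges along the full sequence to $\tilde{A}_4:=\lim_n\int x^4\,d\mu_n$, forcing $(H^4)_{00}=\tilde{A}_4$ for every right limit. Hence $a^2c^2=\tfrac12\bigl(\tilde{A}_4-4B^2A-(A+B^2)^2\bigr)$ is the same for all right limits, and together with $a^2+c^2=A$ this fixes $\{a^2,c^2\}$, i.e.\ the right limit up to a shift.

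Finally, for weak convergence let $\nu$ be the common diagonal measure of this essentially unique right limit; it is well defined because every element of $\mathcal{S}(A,B)$ has all its diagonal spectral measures equal and a shift does not change them. For each fixed $k$, any subsequence of $\bigl((J^k)_{nn}\bigr)_n$ has a further subsequence along which $J$ has a right limit $H'$; by the uniqueness just established $H'$ agrees with $H$ up to a shift, so its common diagonal measure is again $\nu$ and $(J^k)_{nn}\to\int x^k\,d\nu$ along that subsequence. Therefore $\int x^k\,d\mu_n\to\int x^k\,d\nu$ for every $k$, and since all $\mu_n$ are supported in the fixed interval $[-\|J\|,\|J\|]$, convergence of all moments upgrades to weak convergence $\mu_n\to\nu$. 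The main obstacle is really the fourth-moment computation together with its parallel third-moment check: isolating $a^2c^2$ is the entire point of the extra hypothesis, and once it is done the passage to right limits via banded matrix elements and the subsequence argument for weak convergence are routine.
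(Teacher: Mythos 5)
Your proposal is correct and follows exactly the route the paper indicates (the paper only sketches this step with the remark that ``computing powers of $J$ shows that the third moment is not enough, but the fourth moment is''): you compute $(H^4)_{nn}=2a^2c^2+4B^2A+(A+B^2)^2$ for $H\in\mathcal{S}(A,B)$, observe that this isolates $a^2c^2$ and hence the unordered pair $\{a^2,c^2\}$, i.e.\ the element up to a shift, and then deduce weak convergence by the standard subsequence/right-limit argument on a fixed compact support. The computations check out, including the parallel verification that $(H^3)_{nn}=3BA+B^3$ carries no new information.
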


We next want to demonstrate how ratio asymptotics also fit naturally into this
framework.
\begin{definition}
Let $\mu$ be a probability measure on the real line. We say $\mu$ is
\emph{ratio asymptotic} if
\begin{equation} \nonumber
\lim_{n \rightarrow \infty} \frac{P_{n+1}(z)}{P_n(z)} \equiv
\lim_{n \rightarrow \infty} \frac{a_{n+1}p_{n+1}(z)}{p_n(z)}
\end{equation}
exists for all $z \in \C \setminus \R$, where $P_n$ are the monic orthogonal polynomials, $p_n$
are the orthonormal polynomials, and $a_n$ are the off-diagonal Jacobi parameters corresponding to $\mu$.
\end{definition}

As above, our strategy for dealing with ratio asymptotic measures
consists of identifying the appropriate class of right limits by
analyzing their invariants. We also want to emphasize the
relationship with weak asymptotic convergence. Let $H\left(
\{a_n,b_n\}_{n\in \Z} \right)$ be a whole-line Jacobi matrix. Let
$J_n^+=J\left( \{a_{j+n},b_{j+n}\}_{j=1}^\infty \right)$ be the
half-line Jacobi matrix one gets when restricting $H$ to $\ell^2(j >
n)$ with Dirichlet boundary conditions, and $J_n^-=J\left(
\{a_{n-j},b_{n+1-j}\}_{j=1}^\infty \right)$, the half-line Jacobi
matrix one gets when restricting $H$ to $\ell^2(j \leq n)$ with
Dirichlet boundary conditions. $J_n^+$ and $J_n^-$ have spectral
measures associated with them which we denote by $\mu_n^+$ and
$\mu_n^-$. Finally, for $z \in \C \setminus \R$ let $m_\pm(n;z)=\int
\frac{d\mu_n^\pm(x)}{x-z}$ be the corresponding Borel-Stieltjes
transforms. We are interested in $H$ for which these are constants
in $n$. The reason for this is the fact that if $H$ is a right limit
of $J$, then $-\frac{1}{m_-(0;z)}$ is a limit of
$\frac{P_{n+1}(z)}{P_n(z)}$ along an appropriate subsequence (see
e.g.\ \cite{Remling}---note that his $m_-$ is our $-1/m_-$). Thus,

\begin{theorem}\label{ratio-limits}
Let $H\left( \{a_n,b_n\}_{n\in \Z} \right)$ be a whole-line Jacobi matrix. Then the
following are equivalent: \\
$(i)$ $H$ belongs to Simon Class and its spectrum is a single interval. \\
$(ii)$ $a_{n}=a, \ b_n=b$ for some numbers, $a \geq 0$
and $b \in \R$ and all $n\in\Z$.  \\
$(iii)$ $m_-(n;z)=m_-(n+1;z)$ for all $z \in \C \setminus \R$, $n\in\Z$. \\
$(iv)$ $m_+(n;z)=m_+(n+1;z)$ for all $z \in \C \setminus \R$, $n\in\Z$. \\
$(v)$ $m_-(n;z)=m_-(n+1;z)$ for some $z \in \C \setminus \R$ and all $n\in\Z$. \\
$(vi)$ $m_+(n;z)=m_+(n+1;z)$ for some $z \in \C \setminus \R$ and
all $n\in\Z$.
\end{theorem}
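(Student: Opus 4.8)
The plan is to show that conditions $(ii)$ through $(vi)$ are each equivalent to $H$ having constant coefficients, and to handle $(i)$ separately through the period-two structure supplied by Theorem~\ref{Simon Class Characterization}. Concretely, I would establish the two chains
\[
(ii) \Rightarrow (iii) \Rightarrow (v) \Rightarrow (ii), \qquad (ii) \Rightarrow (iv) \Rightarrow (vi) \Rightarrow (ii),
\]
and then close with $(i) \Leftrightarrow (ii)$. The implications $(ii)\Rightarrow(iii)$ and $(ii)\Rightarrow(iv)$ are immediate: if $a_n\equiv a$ and $b_n\equiv b$, then each half-line matrix $J_n^{\pm}$ equals the free half-line Jacobi matrix, independent of $n$, so $\mu_n^{\pm}$ and hence $m_{\pm}(n;z)$ do not depend on $n$. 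The implications $(iii)\Rightarrow(v)$ and $(iv)\Rightarrow(vi)$ are trivial weakenings (``all $z$'' implies ``some $z$''). Thus the genuine content is $(v)\Rightarrow(ii)$, $(vi)\Rightarrow(ii)$, and $(i)\Leftrightarrow(ii)$.

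For $(v)\Rightarrow(ii)$ I would use coefficient stripping. The Riccati relation for the left half-line $m$-functions reads
\[
m_-(n;z) = \frac{1}{b_n - z - a_{n-1}^2\, m_-(n-1;z)},
\]
so shifting the index gives $m_-(n+1;z) = \bigl(b_{n+1} - z - a_n^2\, m_-(n;z)\bigr)^{-1}$. Condition $(v)$ says that for some fixed $z_0\in\C\setminus\R$ the value $m_-(n;z_0)\equiv m_0$ is independent of $n$. Substituting, $m_0\bigl(b_{n+1} - z_0 - a_n^2 m_0\bigr) = 1$ for every $n$, that is, $b_{n+1} - a_n^2 m_0 = z_0 + 1/m_0 =: C$ is a constant independent of $n$. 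Now $m_0 = \int (x-z_0)^{-1}\,d\mu_n^-(x)$ is a Herglotz value with $\Im m_0 = \Im(z_0)\int |x-z_0|^{-2}\,d\mu_n^-(x)\neq 0$, while $a_n^2$ and $b_{n+1}$ are real; separating real and imaginary parts of $b_{n+1} - a_n^2 m_0 = C$, the imaginary part forces $a_n^2 = -\Im C/\Im m_0$ to be constant, and then the real part forces $b_{n+1}$ to be constant. Since this holds for all $n\in\Z$, we obtain $(ii)$. The argument for $(vi)\Rightarrow(ii)$ is identical, using the right half-line relation $m_+(n;z) = \bigl(b_{n+1} - z - a_{n+1}^2 m_+(n+1;z)\bigr)^{-1}$.

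It remains to establish $(i)\Leftrightarrow(ii)$. For $(ii)\Rightarrow(i)$, constant coefficients give a periodic (period one) matrix, which lies in the Simon Class and whose spectrum is the single interval $[b-2a,b+2a]$. For $(i)\Rightarrow(ii)$ I would invoke Theorem~\ref{Simon Class Characterization}, whereby Simon Class membership forces the period-two pattern $a_{2n}=a$, $a_{2n+1}=c$, $b_n=b$. Feeding this into the Floquet/discriminant description of the spectrum, $\sigma(H)=\{x:\Delta(x)\in[-2,2]\}$ with $\Delta(x)=\bigl((x-b)^2-a^2-c^2\bigr)/(ac)$, the band condition $-2\le\Delta\le2$ becomes $(a-c)^2\le(x-b)^2\le(a+c)^2$. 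When $a,c>0$ and $a\neq c$ this is a union of two disjoint bands separated by the gap $(b-|a-c|,\,b+|a-c|)$, so the spectrum is not a single interval; when one of $a,c$ vanishes the matrix decouples into $2\times2$ blocks with finite spectrum (cf.\ Proposition~\ref{Simon and Khrushchev Class spectrum}), again not an interval. Hence a single-interval spectrum forces $a=c$, which is precisely $(ii)$.

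The step I expect to be the main obstacle is $(i)\Rightarrow(ii)$: the other implications are short Riccati computations, whereas here one must carry out the period-two Floquet analysis correctly, verify that the inter-band gap closes exactly when $a=c$, and separately dispose of the degenerate decoupled cases. The remainder is routine once the coefficient-stripping relations and the nonvanishing of $\Im m_{\pm}(n;z_0)$ for $z_0\in\C\setminus\R$ are in hand.
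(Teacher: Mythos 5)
Your proposal is correct and follows essentially the same route as the paper: the implications $(ii)\Rightarrow(iii)\Rightarrow(v)$ and $(ii)\Rightarrow(iv)\Rightarrow(vi)$ by periodicity, the coefficient-stripping (Riccati) identity $-1/m_-(n+1;z)=z-b_{n+1}+a_n^2m_-(n;z)$ combined with separation of real and imaginary parts (using $\Im m_\pm(n;z_0)\neq 0$) for $(v),(vi)\Rightarrow(ii)$, and periodic-Jacobi theory for $(i)\Leftrightarrow(ii)$. The only difference is that you write out the period-two discriminant computation showing the gap closes exactly when $a=c$, where the paper simply cites Teschl, and you verify the Riccati step directly rather than via the quadratic equation for $m_-$; both are minor presentational variants of the same argument.
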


\begin{proof}
$(i) \Leftrightarrow (ii)$ follows from the theory of periodic
Jacobi matrices (see \cite[Sect.\ 7.4]{Teschl}). $(ii) \Rightarrow
(iii) \Rightarrow (v)$ and $(ii) \Rightarrow (iv) \Rightarrow (vi)$
are clear by periodicity. Thus we are left with showing $(v)
\Rightarrow (ii)$ and $(vi) \Rightarrow (ii)$. Writing down the
continued fraction expansion for $m_-(n;z)$:
\[
-\frac{1}{m_-(n+1;z)}=z-b_{n+1}+a_n^2m_-(n;z), \quad n\in\Z,
\]
one sees that $(v)$ implies $m_-(n;z)$ satisfies a quadratic
equation. $a_n$ and $b_{n+1}$ are then determined from this equation
by taking imaginary and real parts, and so we get $(v) \Rightarrow
(ii)$ (see the proof of Theorem 2.2 in \cite{Simon-Weak} for
details). The same can be done for $m_+(n;z)$ to get $(vi)
\Rightarrow (ii)$.
\end{proof}

By the above discussion and Theorem \ref{ratio-limits}, it follows that $\mu$ is
ratio asymptotic if and only if its Jacobi matrix has a unique right limit in
Simon Class with constant off-diagonal elements. Moreover, $(v)$ in Theorem
\ref{ratio-limits} implies that it is enough to require ratio asymptotics at a
single $z \in \C \setminus \R$. This is precisely the content of Theorem 1 in
\cite{Simon-Weak}. We shall show below that the same strategy can be applied in
the OPUC case in order to get a strengthening of corresponding results by
Khrushchev.

\subsection{The Khrushchev Class}

We now turn to the discussion of the analogous theory for half-line
CMV matrices. Namely, we study CMV matrices with the property that
$d\mu_n(\theta)=|\varphi_{n}(e^{i\theta})|^2d\mu(\theta)$ has a weak
limit as $n \rightarrow \infty$. Again, as is clear from the
discussion in the introduction, all these right limits belong to
Khrushchev Class (recall Definition \ref{Khrushchev Class}) and so
the analysis is mainly the analysis of properties of that class.
Since nontrivial CMV matrices can have many powers with zero
diagonal, the computations are substantially more complicated. Here
is the analog of Theorem \ref{Simon Class Characterization}:

\begin{theorem}\label{Khrushchev Class Characterization}
Let $\EC$ be a whole-line CMV matrix and $k\in\N\cup\{\infty\}$. Then the
following are equivalent: \\
$(i)$ $\EC$ belongs to Khrushchev Class with $[\EC^\ell]_{n,n}=0$
for $\ell=1,\dots,k-1$ and all $n\in\Z$, and in the case $k<\infty$,
$[\EC^k]_{n,n}=c$ for some $c\in\ol\D\setminus\{0\}$ and all
$n\in\Z$.
\\
$(ii)$ For $\ell=1,\dots,k-1$,
\begin{align*}
\int_0^{2\pi}e^{i\ell\te}\,d\mu_n(\te)=0, \quad n\in\Z,
\end{align*}
and if $k<\infty$ then additionally, for some
$c\in\ol\D\setminus\{0\}$,
\begin{align*}
\int_0^{2\pi}e^{ik\te}\,d\mu_n(\te)=c, \quad n\in\Z.
\end{align*}
$(iii)$ There exist $n_0\in\N$, $a,b\in(0,1]$, and $t\in[0,2\pi)$
such that in the case $k<\infty$,
\begin{align*}
&|\al_{n_0+2nk}|=a, \quad |\al_{n_0+(2n+1)k}|=b, \\
&\al_{n_0+nk+j}=0,  \quad \arg(\ol\al_{n_0+(n+1)k}\al_{n_0+nk})=t, \quad
n\in\Z, \; j=1,\dots,k-1,
\end{align*}
and in the case $k=\infty$,
\begin{align*}
&\al_{j}=0, \quad j\in\Z\setminus\{n_0\}.
\end{align*}
\end{theorem}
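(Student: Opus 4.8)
The plan is to follow the template of Theorem~\ref{Simon Class Characterization}, proving $(i)\Rightarrow(ii)\Rightarrow(iii)\Rightarrow(i)$, after translating everything into the language of Schur functions. Since $\EC$ is unitary, the moments of the diagonal measure are $\int_0^{2\pi}e^{i\ell\te}\,d\mu_n(\te)=[\EC^\ell]_{n,n}$ for $\ell\geq0$, so $(i)\Rightarrow(ii)$ is immediate. The crucial computational tool is a lemma describing the order of vanishing of the diagonal Schur function at the origin: using \eqref{diag-schur} together with the fact that in the Schur algorithm each leading Verblunsky coefficient equal to zero contributes one factor of $z$, one finds that if $\al_n=\dots=\al_{n+j-1}=0$, $\al_{n+j}\neq0$ and $\al_{n-1}=\dots=\al_{n-i}=0$, $\al_{n-i-1}\neq0$, then
$$
f(z,n)=f_+(z,n)f_-(z,n)=-\al_{n+j}\,\ol{\al_{n-i-1}}\,z^{\,i+j}+O(z^{\,i+j+1}).
$$
Equivalently, $f(z,n)$ vanishes to order exactly $i+j$, the sum of the forward and backward distances from $n$ to the nearest nonzero Verblunsky coefficients, with leading coefficient the product of the two flanking coefficients.

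For $(ii)\Rightarrow(iii)$ I would argue as follows. The conditions $\int e^{i\ell\te}\,d\mu_n=0$ for $1\le\ell\le k-1$ say that $f(z,n)$ vanishes to order $\ge k-1$ at the origin, while $\int e^{ik\te}\,d\mu_n=c\neq0$ forces the order to be exactly $k-1$ and pins down the leading coefficient. By the lemma this means that for every $n$ the flanking nonzero coefficients are at distance summing to $k-1$; denoting the nonzero positions by $\dots<p_{-1}<p_0<p_1<\dots$ this reads $p_{m+1}-p_m=k$ for all $m$, i.e.\ the nonzero coefficients are equally spaced with gap $k$. (If the nonzero set were not bi-infinite, some $f_\pm(z,n)$ would vanish identically, giving $\mu_n=\f{d\te}{2\pi}$ and hence $c=0$, a contradiction; this also disposes of the degenerate all-zero case.) Via the lemma, together with the relation between the moments of a unitary and the Taylor coefficients of its Schur function, the leading-coefficient statement yields $-\ol{\al_{p_{m+1}}}\al_{p_m}=c$ for all $m$; taking moduli gives $|\al_{p_{m+1}}||\al_{p_m}|=|c|$, whence $|\al_{p_m}|$ alternates between two values $a,b$ with $ab=|c|$, and taking arguments gives $\arg(\ol{\al_{p_{m+1}}}\al_{p_m})=t$ constant. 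This is precisely $(iii)$, with $n_0$ any nonzero position. The case $k=\infty$ is exactly the dichotomy of Example~\ref{example} and the Remark following Theorem~\ref{equivalence}: all moments vanish, so $f(z,n)\equiv0$ for every $n$, forcing at most one nonzero coefficient.

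Conversely, for $(iii)\Rightarrow(i)$ the power conditions $[\EC^\ell]_{n,n}=0$ $(\ell<k)$ and $[\EC^k]_{n,n}=c$ follow by reading the lemma in the other direction. The real content is membership in Khrushchev Class, i.e.\ that \emph{all} $\mu_n$ coincide, not merely that their first $k$ moments do. Here I would use three ingredients. First, a direct check from the Schur algorithm that a \emph{constant} rotation $\al_n\mapsto\lambda\al_n$ ($|\lambda|=1$) sends $f_+(z,n)\mapsto\lambda f_+(z,n)$ and $f_-(z,n)\mapsto\ol\lambda f_-(z,n)$, hence leaves every diagonal Schur function $f(z,n)=f_+f_-$—and so every $\mu_n$—invariant; since $(iii)$ gives $\al_{n+2k}=e^{-2it}\al_n$, translation by $2k$ is such a rotation, so $\mu_{n+2k}=\mu_n$. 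Second, the lemma's computation shows that for $n$ strictly inside a gap, $f(z,n)=z^{k-1}f_+(z,p_{m+1})f_-(z,p_m+1)$ is independent of the position of $n$ within that gap, so all sites in a common gap share one measure. Third—and this is where decimation enters—iterating the Schur algorithm across one gap gives the renewal relation $z^k f_+(z,p_{m+1})=\bigl(f_+(z,p_m)-\al_{p_m}\bigr)\bigl(1-\ol{\al_{p_m}}f_+(z,p_m)\bigr)^{-1}$, which is exactly the Schur recursion in the variable $\zeta=z^k$ for the coefficients $\{\al_{p_m}\}$. Thus $f_+(z,p_m)=\tilde f_+(z^k,m)$, and with the analogous backward relation for $f_-$ one gets $f(z,p_m)=\tilde f(z^k,m)$, the diagonal Schur function of the whole-line CMV matrix $\wti\EC$ built from $\{\al_{p_m}\}$, which is a $(iii)$-structure with $k=1$. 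This reduces the equality of the diagonal measures at the nonzero sites to the case $k=1$.

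It remains to settle the $k=1$ case and to match the gap measures to the nonzero-site measures, and I expect this to be the main obstacle. For $k=1$ the coefficients are everywhere nonzero with $|\al_{2n}|=a$, $|\al_{2n+1}|=b$ and $\arg(\ol{\al_{n+1}}\al_n)=t$; applying a measure rotation to remove the running argument and then a constant rotation, I may assume the $\al_n$ are real and $2$-periodic, so that $\EC$ is genuinely periodic with measure invariant under complex conjugation. Combining this conjugation symmetry with the period-$2$ translation—the exact analog of the reflection $v(n)=u(1-n)$ used in Theorem~\ref{Simon Class Characterization}—yields $\mu_0=\mu_1$ and hence Khrushchev Class. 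The delicate points, which I would treat carefully rather than wave away, are the precise covariance of the diagonal measures under measure rotation on the whole line, and checking that the (at most two) gap measures $z^{k-1}f_+(z,p_{m+1})f_-(z,p_m+1)$ coincide with the decimated nonzero-site measures; both are finite computations with the explicit coefficients of $(iii)$ and the recursions \eqref{recursion}.
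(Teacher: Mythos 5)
Your architecture is sound and, for the two substantive implications, genuinely different from the paper's. For $(ii)\Rightarrow(iii)$ the paper computes $[\EC^p]_{n,n}$ inductively from the $\cL\cM$ factorization, splitting $\cL$ and $\cM$ into diagonal and off-diagonal parts and tracking which products contribute to the diagonal, arriving at $[\EC^p]_{n,n}=-\sum_{\ell=1}^p\ol\al_{n+p-\ell}\al_{n-\ell}$; you instead read the same information off the order of vanishing of $f(z,n)=f_+(z,n)f_-(z,n)$ at the origin. Your key lemma is correct (each leading zero parameter contributes a factor of $z$, and the first nonzero Taylor coefficients of $f_+(\cdot,n)$ and $f_-(\cdot,n)$ are $\al_{n+j}$ and $-\ol{\al_{n-i-1}}$ by Geronimus), the translation between moments of $\mu_n$ and Taylor coefficients of $f(\cdot,n)$ via $F-1=2zf/(1-zf)$ is right, and the deduction that the nonzero positions are equally spaced with gap $k$ and satisfy $-\ol{\al_{p_{m+1}}}\al_{p_m}=c$ is clean --- arguably cleaner than the paper's computation. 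For $(iii)\Rightarrow(i)$ the paper writes $\al_{n_0+n}=|\al_{n_0+n}|e^{i(t_0+tn)}$, transfers everything to $|\al|$, and verifies directly that $f(z,n;\al)$ equals $-e^{-it(k-2)}z^{k-1}f_+(e^{-it}z,n_0+k;|\al|)\,f_+(e^{-it}z,n_0;|\al|)$ for every $n$; your rotation covariance plus decimation is a valid alternative (minor slip: $f(z,p_m)=z^{k-1}\tilde f(z^k,m)$ rather than $\tilde f(z^k,m)$, but the extra factor is $m$-independent so the reduction survives).

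The one step that would fail as written is the $k=1$ endgame. After normalizing to real $2$-periodic coefficients $\al_{2n}=a$, $\al_{2n+1}=b$, you propose to get $\mu_0=\mu_1$ by ``combining conjugation symmetry with the period-$2$ translation.'' But conjugation invariance fixes each $\mu_n$ individually and translation only gives $\mu_{n+2}=\mu_n$; neither relates $\mu_0$ to $\mu_1$. What is needed is a spatial reflection, and unlike the Jacobi case the CMV matrix is not invariant under $n\mapsto 1-n$ in any naive sense (its transpose is in $\cM\cL$ rather than $\cL\cM$ form), so the analogy with $v(n)=u(1-n)$ does not transfer directly. The fix is a one-line Schur identity, which is in essence the paper's relation $f_-(z,n+1;|\al|)=-f_+(z,n;|\al|)$: for real coefficients $f_-(z,n)=f(z;-\al_{n-1},-\al_{n-2},\dots)=-f(z;\al_{n-1},\al_{n-2},\dots)$, and $2$-periodicity gives $\al_{n-1-j}=\al_{n+1+j}$ for all $j\geq0$, whence $f_-(z,n)=-f_+(z,n+1)$ and therefore $f(z,n)=-f_+(z,n)f_+(z,n+1)=f(z,n+1)$ for all $n$. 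With that substitution your proof closes.
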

\begin{remark}
In particular, this shows that the constancy of the first $k$
moments, where the $k$-th moment is the first nonzero one, implies
that $\EC$ belongs to Khrushchev Class. Note, however, that the
value of the $k$-th moment does not determine the element of the
class itself (again, not even up to translation; see
Theorem~\ref{Khrushchev-moment} below). Thus, it makes sense to
define $\cK(c,k)$, for $k < \infty$, to be the set of all matrices
in the Khrushchev Class with $[\EC^\ell]_{n,n}=0$ for all $n\in\Z$,
$\ell=1,\dots,k-1$, and $[\EC^k]_{n,n}=c\neq0$ for all $n\in\Z$. In
the case $k = \infty$, let $\cK(\infty)$ be the set of all matrices
with $[\EC^\ell]_{n,n}=0$ for all $n\in\Z$, $\ell \geq 1$. We note
that every CMV matrix $\EC$ from the Khrushchev Class belongs to one
of $\cK(c,k)$, $c\in\ol\D\setminus\{0\}$, $k\in\N$, or to
$\cK(\infty)$.
\end{remark}

\begin{proof}
$(i) \Rightarrow (ii)$: Follows from
\begin{align} \lb{moments}
\int_0^{2\pi}e^{i\ell\te}\,d\mu_n(\te) = [\EC^\ell]_{n,n} \,\text{ for all }\,
\ell\in\N,\; n\in\Z.
\end{align}

$(ii) \Rightarrow (iii)$: First, observe that $(iii)$ is equivalent to the
following L\'{o}pez-type condition: there exists $n_0\in\Z$ such that for all
$n\in\Z$, $\ell=1,\dots,k$, $j=0,\dots,\ell-1$,
\begin{align} \lb{Lopez-condition}
\ol\al_{n_0+n\ell+j}\al_{n_0+(n-1)\ell+j} =
\begin{cases}
abe^{it} & j=0,\; \ell=k,\\
0 & \text{otherwise}.
\end{cases}
\end{align}
We will show that \eqref{Lopez-condition} holds with $abe^{it}=-c$ by verifying
inductively with respect to $\ell$ that
\begin{align} \lb{Lopez-moments}
-\ol\al_{n_0+n\ell+j}\al_{n_0+(n-1)\ell+j} =
\begin{cases}
\int_{0}^{2\pi}e^{i\ell\te}\,d\mu_{n_0+n\ell} & j=0,\\
0 & \text{otherwise}
\end{cases}
\end{align}
for some $n_0\in\Z$ and all $n\in\Z$, $\ell=1,\dots,k$,
$j=0,\dots,\ell-1$.

The case $\ell=1$ trivially follows from \eqref{whole-line-cmv}
since the first moment of $\mu_n$ is exactly the diagonal element
$\EC_{n,n}$ for all $n\in\Z$.

Now suppose \eqref{Lopez-moments} holds for $\ell=1,\dots,p-1$ for some $p \leq
k$. In view of \eqref{moments}, we want to compute
$[\EC^p]_{n,n}=[(\cL\cM)^p]_{n,n}$. To do this, it turns out to be useful to
separate the diagonal and off-diagonal elements of $\cL$ and $\cM$ and identify
the contributions to the product. Thus, let the diagonal matrices $X_{-1}=
\textrm{diag} \cL$ and $X_{1}=\textrm{diag} \cM$ be the diagonals of $\cL$  and
$\cM$ respectively. Furthermore, define $Y_{-1}$ and $Y_1$ through
$\cL=X_{-1}+Y_{-1},$ $\cM=X_1+Y_1$. Expressed in this notation, our objective
is to compute the diagonal elements of \begin{equation}
\label{diagonal-factorization} \EC^p=\left(X_{-1}+Y_{-1}
\right)\left(X_{(-1)^2}+Y_{(-1)^2} \right)\cdots
\left(X_{(-1)^{2p}}+Y_{(-1)^{2p}} \right).
\end{equation}

First, it is a direct computation to verify that for any two $s, r \in \N$,
\begin{equation} \label{Ys}\textrm{diag} Y_{(-1)^s}Y_{(-1)^{s+1}} \cdots
Y_{(-1)^{s+r}}=0. \end{equation} Now, using $(ii)$  and the induction
hypothesis (\eqref{Lopez-moments} for $\ell \leq p-1$) one verifies that
\begin{align}
&[Y_{(-1)^{j-s}} Y_{(-1)^{j-s+1}} \cdots Y_{(-1)^{j-1}} X_{(-1)^j}
Y_{(-1)^{j+1}} \cdots Y_{(-1)^{j+s-1}} Y_{(-1)^{j+s}}]_{n,n} \no \\
&\quad =
\begin{cases}
\ol\al_{n+s}\rho^2_{n}\cdots\rho^2_{n+s-1} & \text{$n+s+j$ is odd},\\
-\al_{n-s-1}\rho^2_{n-s}\cdots\rho^2_{n-1} & \text{$n+s+j$ is even}
\end{cases} \no \\
&\quad =
\begin{cases}
\ol\al_{n+s} & \text{$n+s+j$ is odd},\\
-\al_{n-s-1} & \text{$n+s+j$ is even},
\end{cases} \quad n,j\in\Z, \; s=0,\dots,p-1, \lb{YXYdiag}
\end{align}
and
\begin{align}
&[Y_{(-1)^{j-s}} Y_{(-1)^{j-s+1}} \cdots Y_{(-1)^{j-1}} X_{(-1)^j}
Y_{(-1)^{j+1}} \cdots Y_{(-1)^{j+s-1}} Y_{(-1)^{j+s}}]_{n,m}=0 \lb{YXYoffdiag}
\end{align}
whenever $n \neq m$.

This identity combined with the induction hypothesis, $(ii)$, \eqref{moments},
\eqref{diagonal-factorization}, and \eqref{Ys} implies that (for notational
simplicity we let $\wti{Y}_j=Y_{(-1)^j}$, $\wti{X}_j=X_{(-1)^j}$)
\begin{align*}
&\int_0^{2\pi}e^{ip\te}\,d\mu_n(\te) = [\EC^p]_{n,n} \\
&\quad = \sum_{\ell=1}^p [\wti{Y}_1 \cdots \wti{Y}_{\ell-1}
\wti{X}_\ell \wti{Y}_{\ell+1} \cdots
\wti{Y}_{p+\ell-1} \wti{X}_{p+\ell} \wti{Y}_{p+\ell+1} \cdots \wti{Y}_{2p}]_{n,n}\\
&\quad = -\sum_{\ell=1}^p
\begin{cases}
\ol\al_{n+p-\ell}\al_{n-\ell} & \text{$n$ is odd},\\
\ol\al_{n+\ell-1}\al_{n-p+\ell-1} & \text{$n$ is even}
\end{cases} \\
&\quad = -\sum_{\ell=1}^p \ol\al_{n+p-\ell}\al_{n-\ell}, \quad
n\in\Z.
\end{align*}
The idea behind the computation is that all summands containing no $X$, a
single $X$, or two $X$'s that are a distance greater than or less than $p$
apart do not contribute to the diagonal. This follows from the induction
hypothesis and \eqref{Ys}--\eqref{YXYoffdiag}.

Now, observe that the sum in the above equality may have at most one
nonzero term. Indeed, if there are no nonzero terms we are done
(this may happen only if $p<k$), otherwise let $n_0\in\Z$ be such
that $\ol\al_{n_0}\al_{n_0-p}\neq0$. Then combining the induction
hypothesis \eqref{Lopez-moments} with $(ii)$ yields,
\[
\ol\al_{n+\ell}\al_{n} = 0, \quad n\in\Z, \; \ell=1,\dots,p-1
\]
which together with $\ol\al_{n_0}\al_{n_0-p}\neq0$ implies
\[
\al_{n_0+np+\ell}=0, \quad n\in\Z, \; \ell=1,\dots,p-1.
\]
Hence, when $p<k$,
\[
0=\int_0^{2\pi}e^{ip\te}\,d\mu_{n_0+np}(\te) =
-\ol\al_{n_0+np}\al_{n_0+(n-1)p}, \quad n\in\Z,
\]
and carrying the induction up to $k$,
\[
c = \int_0^{2\pi}e^{ik\te}\,d\mu_{n_0+nk}(\te) =
-\ol\al_{n_0+nk}\al_{n_0+(n-1)k}, \quad n\in\Z.
\]
Thus, \eqref{Lopez-moments} holds for $\ell=k$, and hence $(iii)$ follows from
\eqref{Lopez-condition} with $abe^{it}=-c$.

$(iii) \Rightarrow (i)$: First, note that if $k=\infty$ then there is at most
one nonzero Verblunsky coefficient and hence by Example~\ref{example}, $\EC$ is
in the Khrushchev Class with $d\mu_n(\te)=\f{d\te}{2\pi}$ for all $n\in\Z$, and
hence it follows from \eqref{moments} that $[\EC^\ell]_{n,n}=0$ for all
$\ell\in\N$ and $n\in\Z$.

Next suppose $k<\infty$. It follows from $(iii)$ that there are
$t_0,t\in[0,2\pi)$ such that
\[
\al_{n_0+n}=|\al_{n_0+n}|e^{i(t_0+tn)} \,\text{ for all }\, n\in\Z.
\]
Then, using the Schur algorithm, one finds the following relations
between the functions $f_\pm$ associated with
$\al=\{\al_n\}_{n\in\Z}$ and $|\al|=\{|\al_n|\}_{n\in\Z}$,
respectively,
\begin{align*}
&f_+(z,n_0+n;\al)=e^{i(t_0+tn)}f_+(e^{-it}z,n_0+n;|\al|),\\
&f_-(z,n_0+n;\al)=e^{-i(t_0+t(n-1))}f_-(e^{-it}z,n_0+n;|\al|), \quad n\in\Z, \; z\in\D.
\end{align*}
Hence by \eqref{diag-schur} the diagonal Schur functions associated with $\al$
and $|\al|$ are related by
\begin{align} \lb{abs-al}
f(z,n_0+n;\al) = e^{it}f(e^{-it}z,n_0+n;|\al|), \quad n\in\Z, \; z\in\D.
\end{align}

Now, the conditions in $(iii)$ imply,
\begin{align*}
&f_+(z,n_0+nk+j;|\al|) = z^{k-j}f_+(z,n_0+(n+1)k;|\al|),\\
&f_-(z,n_0+nk+j;|\al|) = z^{j-1}f_-(z,n_0+nk+1;|\al|),\\
&f_+(z,n_0+nk;|\al|) = f_+(z,n_0+(n \mod 2)k;|\al|),\\
&f_-(z,n_0+nk+1;|\al|) = -f_+(z,n_0+nk;|\al|), \quad n\in\Z, \;
j=1,\dots,k, \; z\in\D.
\end{align*}

These identities together with \eqref{diag-schur} and \eqref{abs-al}
yield
\begin{align*}
&f(z,n_0+nk+j;\al)\\
&\quad = e^{-it(k-2)}z^{k-1}f_+(e^{-it}z,n_0+(n+1)k;|\al|)f_-(e^{-it}z,n_0+nk+1;|\al|)\\
&\quad = -e^{-it(k-2)}z^{k-1}f_+(e^{-it}z,n_0+(n+1)k;|\al|)f_+(e^{-it}z,n_0+nk;|\al|)\\
&\quad =
-e^{-it(k-2)}z^{k-1}f_+(e^{-it}z,n_0+k;|\al|)f_+(e^{-it}z,n_0;|\al|)
\end{align*}
for all $n\in\Z$, $j=1,\dots,k$, $z\in\D$. Hence
$f(\cdot,n;\al)=f(\cdot,m;\al)$ which is equivalent to $\mu_m=\mu_n$ for all
$m,n\in\Z$.

The presence of the factor $z^{k-1}$ implies that the first $k-1$ moments of
$\mu_n$, $n\in\Z$, are zero. This follows from the relationship
\eqref{carat-schur} between Schur functions and Carath\'eodory functions and
the fact that Taylor coefficients of $F$ are twice the complex conjugates of
the moments of $\mu$. Moreover, since $z^{-k+1}f(z,n_0+nk+j;\al)$ is nonzero at
the origin, the $k$-th moment of $\mu_n$, $n\in\Z$, is nonzero, and hence one
gets $(i)$ from \eqref{moments}.
\end{proof}

\begin{corollary}\label{generalized-Khrushchev}
Let $\cmv$ be a half-line CMV matrix and let $\mu$ be its spectral
measure. For $n \geq 0$, let $d\mu_n(\te) =
|\varphi_{n}(e^{i\te})|^2\,d\mu(\te)$ be the spectral measure of $C$
and $\delta_n$. If for some $c\in\ol\D\setminus\{0\}$ and
$k\in\N\cup\{\infty\}$,
\begin{equation} \label{k-moments}
\lim_{n \rightarrow \infty} \int_0^{2\pi} e^{i\ell\te}\, d\mu_n(\te) =
\begin{cases}
0 & \ell=1,\dots,k-1,\\
c & \ell=k,\; k<\infty,
\end{cases}
\end{equation}
then all right limits of \,$\cmv$ are in $\cK(c,k)$ if $k<\infty$ or
in $\cK(\infty)$ if \,$k=\infty$.
\end{corollary}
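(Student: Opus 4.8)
The plan is to mimic the proof of Corollary~\ref{generalized-Simon}, replacing the first- and second-moment computation for Jacobi matrices by the diagonal-matrix-element identity \eqref{moments} for CMV matrices and invoking Theorem~\ref{Khrushchev Class Characterization} in place of Theorem~\ref{Simon Class Characterization}. First I would recall that $d\mu_n=|\varphi_n|^2\,d\mu$ is exactly the spectral measure of $\cmv$ and $\delta_n$, so that the half-line analog of \eqref{moments} gives $\int_0^{2\pi}e^{i\ell\te}\,d\mu_n(\te)=[\cmv^\ell]_{n,n}$ for all $\ell\in\N$ and $n\geq0$. Hence the hypothesis \eqref{k-moments} translates directly into the statement that, as $n\to\infty$, $[\cmv^\ell]_{n,n}\to0$ for $\ell=1,\dots,k-1$ and (when $k<\infty$) $[\cmv^k]_{n,n}\to c$. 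The goal is then to show that every right limit $\EC$ satisfies condition $(ii)$ of Theorem~\ref{Khrushchev Class Characterization}, whence that theorem places $\EC$ in $\cK(c,k)$ (resp. $\cK(\infty)$).

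Next I would fix a right limit $\EC$, with defining sequence $n_j\to\infty$ and $\wti\al_m=\lim_{j}\al_{m+n_j}$ for all $m\in\Z$. The crucial point is that, because CMV matrices are $5$-diagonal, the power $\cmv^\ell$ is banded, so each diagonal entry $[\cmv^\ell]_{m,m}$ is one and the same polynomial in the finitely many Verblunsky coefficients whose index lies in a fixed window about $m$ (of width $O(\ell)$, independent of $m$), for $m$ large enough to be away from the boundary—which is automatic along $n_j$. Since only finitely many coefficients enter this polynomial and each of them converges, we obtain $[\EC^\ell]_{n,n}=\lim_{j}[\cmv^\ell]_{n+n_j,n+n_j}=\lim_{m\to\infty}[\cmv^\ell]_{m,m}$ for every fixed $\ell$ and $n$, where the last limit exists by hypothesis and is independent of $n$. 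Combining this with the translated hypothesis gives $[\EC^\ell]_{n,n}=0$ for $\ell=1,\dots,k-1$ and $[\EC^k]_{n,n}=c$ when $k<\infty$, for all $n\in\Z$; equivalently, using \eqref{moments} for $\EC$, the diagonal measures of $\EC$ satisfy condition $(ii)$ of Theorem~\ref{Khrushchev Class Characterization}.

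Finally, Theorem~\ref{Khrushchev Class Characterization} (the implication $(ii)\Rightarrow(i)$) shows $\EC$ lies in the Khrushchev Class with $[\EC^\ell]_{n,n}=0$ for $\ell<k$ and $[\EC^k]_{n,n}=c$, i.e.\ $\EC\in\cK(c,k)$; in the case $k=\infty$ the same argument applied to every $\ell\geq1$ yields $[\EC^\ell]_{n,n}=0$ for all $\ell$, so $\EC\in\cK(\infty)$. Since $\EC$ was an arbitrary right limit, this proves the corollary. I expect the only real work to be in the middle step—justifying the interchange of limits $[\EC^\ell]_{n,n}=\lim_j[\cmv^\ell]_{n+n_j,n+n_j}$—but this is routine given the banded structure of CMV matrices, being merely the continuity of a fixed polynomial in finitely many entries together with the defining convergence of the right limit (see e.g.\ \cite{Last-Simon}).
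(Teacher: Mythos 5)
Your proof is correct and is essentially the argument the paper intends: the paper gives no separate proof of this corollary, remarking only that the analogy with Corollary~\ref{generalized-Simon} should be clear, and that analogy is precisely your chain of identifying moments with diagonal entries of powers via \eqref{moments}, passing to the right limit by the banded structure, and invoking the implication $(ii)\Rightarrow(i)$ of Theorem~\ref{Khrushchev Class Characterization}. The only cosmetic point is that, because of the even/odd alternation in the CMV structure, the phrase ``one and the same polynomial'' should be read per parity class (or one simply notes that $[\cmv^\ell]_{m,m}$ is the universal $\ell$-th moment polynomial of $\mu_m$ in the Verblunsky coefficients), which changes nothing since the hypothesis supplies a single limit as $m\to\infty$.
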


The analogy with Corollary \ref{generalized-Simon} should be clear.
Corollary~\ref{generalized-Khrushchev} is a variant of Theorem E in
\cite{Khrushchev} with weaker assumptions and weaker conclusions.
Our proof is new and based on a completely different approach. We
also note that much the same as in the Jacobi case, convergence of
the first $k$-moments does not imply weak convergence, but by
Corollary \ref{generalized-Khrushchev} it does imply the same weak
form of weak convergence: convergence holds along any subsequence on
which $\cmv$ has a right limit.

A notable difference between the OPUC and OPRL cases is the fact
that multiplication of the Verblunsky coefficients by a constant
phase does not change the spectral measures. Thus, even when $\mu_n$
converges weakly, it is not possible to deduce uniqueness of a right
limit (even up to a shift). Note that the phase ambiguity is
equivalent to a choice of $t_0$ in the proof of Theorem
\ref{Khrushchev Class Characterization} and there is no way to
determine this $t_0$ from information on $\mu_n$ alone. In the case
$k=\infty$ even $|\alpha_{n_0}|$ cannot be determined from the
information on the measure and so the indeterminacy is, in a sense,
even more severe.

That said, as in the Jacobi case, it is clear that when $k < \infty$
any condition forcing weak convergence of $\mu_n$ (in addition to
those in Corollary \ref{generalized-Khrushchev}) is equivalent to a
condition that distinguishes an element of $\cK(c,k)$ (up to a shift
and multiplication by an arbitrary phase). In particular, a somewhat
tedious computation (along the lines of the argument in $(ii)
\Rightarrow (iii)$ above) shows that the following result holds:

\begin{theorem}\label{Khrushchev-moment}
Suppose that $k<\infty$, \eqref{k-moments} holds, and $\lim_{n \rightarrow \infty}
\int_0^{2\pi} e^{2ik\te}\,d\mu_n(\te)$ exists.  Then $d\mu_n$ converge weakly
and $\cmv$ has a unique right limit $($up to a shift and multiplication by a
constant phase$)$ in $\cK(c,k)$.
\end{theorem}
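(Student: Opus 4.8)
The plan is to use Corollary~\ref{generalized-Khrushchev}, which already guarantees that every right limit of $\cmv$ lies in $\cK(c,k)$, and to show that the hypothesis on the $2k$-th moment pins down the single remaining invariant of this class up to a shift and a constant phase. By Theorem~\ref{Khrushchev Class Characterization}$(iii)$, an element of $\cK(c,k)$ is described by moduli $a,b\in(0,1]$, a shift $n_0$, an overall phase $t_0$, and a phase rate $\tau$, subject to $ab=|c|$ and $e^{-i\tau k}=-c/|c|$ (the latter fixed by $c$). Thus the only datum not already determined by $c$ is the unordered pair $\{a,b\}$, and interchanging $a$ and $b$ amounts to a shift by $k$. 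Moreover, by \eqref{abs-al} the diagonal Schur function $f(\cdot,n)$, hence the common diagonal measure, depends only on $\{a,b\}$ and on $e^{-i\tau k}$: it is unaffected by $n_0$ (Khrushchev Class) and by $t_0$ (a constant phase multiplies $f_+$ and $f_-$ by reciprocal phases, leaving $f=f_+f_-$ fixed). So it suffices to prove that $\lim_{n}\int_0^{2\pi}e^{2ik\te}\,d\mu_n(\te)$ determines $\{a,b\}$.

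The core step is the evaluation of $\int_0^{2\pi}e^{2ik\te}\,d\mu_n=[\EC^{2k}]_{n,n}$ for $\EC\in\cK(c,k)$, through \eqref{moments} and the $\cL\cM$/$X$-$Y$ bookkeeping from the $(ii)\Rightarrow(iii)$ part of Theorem~\ref{Khrushchev Class Characterization}. Expanding \eqref{diagonal-factorization} for the $4k$-fold product and discarding, as there, the configurations with a single diagonal block or with diagonal blocks at separations other than $k$ or $2k$, two families survive: blocks paired at separation $2k$, which---since the nonzero $\al$'s at distance $2k$ have equal modulus and common argument---contribute $-(c/|c|)^2(a^2+b^2)$ via $e^{-2i\tau k}=(c/|c|)^2$; and pairs of blocks at separation $k$, each pair carrying the factor $\ol\al_{m}\al_{m-k}=-c$, which assemble into a term depending only on $c$ and $k$. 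One therefore obtains
\[ \int_0^{2\pi}e^{2ik\te}\,d\mu_n(\te)=-\frac{c^2}{|c|^2}\,(a^2+b^2)+\Psi(c,k), \]
where $\Psi$ is independent of $a$, $b$, $n_0$, $t_0$, and $n$, and the coefficient of $a^2+b^2$ is nonzero. Letting $n\to\infty$ and invoking the existence of the limit then yields the value of $a^2+b^2$. This moment computation is the main obstacle: extending the diagonal expansion to the $2k$-th power and checking that, apart from the distance-$2k$ pairs, every surviving configuration is a function of $c$ and $k$ alone is exactly the ``somewhat tedious computation'' alluded to above, and miscounting which block separations contribute would be the easiest way to go wrong.

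To finish, $ab=|c|$ gives $a^2b^2=|c|^2$, so knowing $a^2+b^2$ determines $\{a^2,b^2\}$, hence $\{a,b\}$, as the roots of a fixed quadratic. This fixes the element of $\cK(c,k)$ up to the swap $a\leftrightarrow b$ (a shift by $k$), the remaining shift $n_0$, and the phase $t_0$---that is, up to a shift and multiplication by a constant phase---so $\cmv$ has a unique right limit in that sense. All right limits then share a single diagonal measure $\nu$; and since every weak subsequential limit of $\mu_n$ is a diagonal measure of some right limit (Proposition~\ref{khrushchev class proposition} and the discussion preceding it), every such subsequential limit equals $\nu$, whence $d\mu_n\to\nu$ weakly.
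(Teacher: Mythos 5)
Your proposal is correct and follows essentially the same route the paper intends: the paper only states that the result follows from ``a somewhat tedious computation (along the lines of the argument in $(ii)\Rightarrow(iii)$ above)'', and your expansion of $[\EC^{2k}]_{n,n}$ via the $X$--$Y$ decomposition, yielding $-(c/|c|)^2(a^2+b^2)$ plus a term depending only on $c$ and $k$, is exactly that computation, with the remaining invariant $\{a,b\}$ then pinned down by $ab=|c|$. Your closing argument for weak convergence via uniqueness of the diagonal measure of all right limits also matches the paper's framework (Proposition~\ref{khrushchev class proposition}).
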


\begin{remark}
We note that on the level of Verblunsky coefficients,
Theorems~\ref{Khrushchev Class Characterization} and
\ref{Khrushchev-moment} imply for $k<\infty$,
\begin{align}
\begin{split}\lb{lopez}
&\lim_{n\to\infty}|\al_{n_0+2nk+j}|=
\begin{cases}a&j=0,\\b&j=k,\\0&j\in\{1,\dots,k-1,k+1,\dots,2k-1\},\end{cases}
\\
&\lim_{n\to\infty}\ol\al_{n_0+(n+1)k}\al_{n_0+nk}=-c, \,\text{ for some }\,
n_0\in\Z \text{ and } ab=|c|,
\end{split}
\end{align}
and similarly, Theorem~\ref{Khrushchev Class Characterization} and
Corollary~\ref{generalized-Khrushchev} imply for $k=\infty$,
\begin{align}\lb{mate-nevai}
\lim_{n\to\infty}|\al_{n_0+n}\al_{n}|=0 \,\text{ for any }\, n_0\in\Z.
\end{align}
This extends \cite[Thm.\ E]{Khrushchev}, where the stronger
condition of weak convergence for the measures $d\mu_n$ is assumed.
\end{remark}

Next, we use right limits to study ratio asymptotics. It is
convenient to introduce $\wti\cK(c,1)$ as the subclass of $\cK(c,1)$
consisting of CMV matrices with Verblunsky coefficients of constant
absolute value.

\begin{definition}
Let $\mu$ be a probability measure on the unit circle. We say $\mu$
is \emph{ratio asymptotic} if
\begin{equation} \nonumber
\lim_{n\to\infty}\f{\Phi^*_{n+1}(z)}{\Phi^*_n(z)}
\end{equation}
exists for all $z \in \D$, where, as usual, $\Phi_n(z)$ is the
degree $n$ monic orthogonal polynomial associated to $\mu$.

In particular, we say \emph{ratio asymptotics} holds at $z \in \D$
with limit $G(z)$ if
\begin{equation} \lb{ra}
\lim_{n\to\infty}\f{\Phi^*_{n+1}(z)}{\Phi^*_n(z)}=G(z).
\end{equation}
\end{definition}

\begin{theorem}
Let $\Phi_n$ be the monic orthogonal polynomials associated with a
half-line CMV matrix $\cmv$. If either all right limits of \,$\cmv$
are in $\cK(\infty)$ or \,$\cmv$ has a unique right limit $($up to a
multiplication by a constant phase$)$ in $\wti\cK(c,1)$, then $\mu$
is ratio asymptotic.

Conversely, if ratio asymptotics holds at some point
$z_0\in\D\setminus\{0\}$ with limit $G(z_0)=1$, then all right
limits of $\cmv$ are in $\cK(\infty)$.  If ratio asymptotics holds
at two points $z_1,z_2\in\D\setminus\{0\}$ and the limit is not $1$
at either point, then $\cmv$ has a unique right limit $($up to
multiplication by a constant phase$)$ in $\wti\cK(c,1)$ for some
$c\in\ol\D\setminus\{0\}$.
\end{theorem}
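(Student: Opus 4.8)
The plan is to reduce the entire statement to one explicit formula for the ratio $\Phi_{n+1}^*/\Phi_n^*$ in terms of the backward Schur functions $f_-$, and then to extract the structure of the right limits from it. The starting point is that the $*$-ed form of the Szeg\H{o} recursion $\Phi_{n+1}^*(z)=\Phi_n^*(z)-\al_n z\Phi_n(z)$ together with \eqref{SzRec} shows that $(\Phi_{n+1},\Phi_{n+1}^*)^t=T_-(z,\al_n)(\Phi_n,\Phi_n^*)^t$. Since $\Phi_0/\Phi_0^*=1=f_-(z,0)$ for the half-line (recall $\al_{-1}=-1$), the Möbius action of $T_-$ identifies $\Phi_n(z)/\Phi_n^*(z)$ with the half-line backward Schur function $f_-(z,n)$, because both satisfy $w_{n+1}=T_-(z,\al_n)\cdot w_n$ with $w_0=1$. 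Hence
\begin{equation*}
\frac{\Phi_{n+1}^*(z)}{\Phi_n^*(z)} = 1-\al_n z\,f_-(z,n), \quad z\in\D.
\end{equation*}
Along any subsequence $n_j$ producing a right limit $\EC$ with coefficients $\wti\al$, the Schur parameters of $f_-(\cdot,n_j)$ converge to those of the backward Schur function of $\EC$ at the origin (exactly as in the proof of Theorem~\ref{Remling}), so $f_-(z,n_j)\to f_-(z,0;\wti\al)$ uniformly on compacts of $\D$ and $\al_{n_j}\to\wti\al_0$. Thus every subsequential limit of $\Phi_{n+1}^*/\Phi_n^*$ equals $G_\EC(z):=1-\wti\al_0\,z\,f_-(z,0;\wti\al)$ for some right limit $\EC$, and by compactness $\mu$ is ratio asymptotic iff $G_\EC$ is common to all right limits; moreover, replacing $n_j$ by $n_j+m$ shows that when $\lim_n \al_n z\,f_-(z,n)$ exists it equals $\wti\al_m z\,f_-(z,m;\wti\al)$ for every right limit and every $m\in\Z$.

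For the forward direction, suppose first that all right limits lie in $\cK(\infty)$, i.e.\ have at most one nonzero coefficient. Then for each such $\EC$ either $\wti\al_0=0$, or the single nonzero coefficient sits at the origin, in which case all coefficients of negative index vanish and hence $f_-(z,0;\wti\al)\equiv0$; either way $G_\EC\equiv1$, so $\mu$ is ratio asymptotic with $G\equiv1$. If instead $\cmv$ has a unique right limit up to a constant phase in $\wti\cK(c,1)$, I would note that multiplying $\{\wti\al_n\}$ by $e^{i\psi}$ multiplies every Schur parameter of $f_+$ by $e^{i\psi}$ and every Schur parameter of $f_-$ by $e^{-i\psi}$, hence multiplies $f_+$ by $e^{i\psi}$ and $f_-$ by $e^{-i\psi}$; therefore the product $\wti\al_0\,f_-(z,0;\wti\al)$, and thus $G_\EC$, is phase-independent. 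So $G_\EC$ is again common to all right limits and $\mu$ is ratio asymptotic.

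For the converse the main tool is the backward recursion $z\,f_-(z,m)=\big(f_-(z,m+1)+\ol{\wti\al_m}\big)/\big(1+\wti\al_m f_-(z,m+1)\big)$, immediate from the Schur algorithm. If ratio asymptotics holds at a single $z_0\in\D\setminus\{0\}$ with $G(z_0)=1$, the limit above gives $\wti\al_m f_-(z_0,m;\wti\al)=0$ for all $m$ and all right limits. I would then rule out two nonzero coefficients: if $\wti\al_p\neq0$, $\wti\al_q\neq0$ with $p<q$ and $\wti\al_j=0$ for $p<j<q$, then $f_-(z_0,p)=0$ forces $f_-(z_0,p+1)=-\ol{\wti\al_p}$, and propagating through the intervening zeros (where the recursion becomes $f_-(z_0,j+1)=z_0 f_-(z_0,j)$) yields $f_-(z_0,q)=-z_0^{\,q-p-1}\ol{\wti\al_p}\neq0$, contradicting $f_-(z_0,q)=0$. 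Hence every right limit has at most one nonzero coefficient, i.e.\ lies in $\cK(\infty)$.

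Finally, if ratio asymptotics holds at two points $z_1,z_2\in\D\setminus\{0\}$ with $G(z_i)\neq1$, the limit gives $\wti\al_m f_-(z_i,m;\wti\al)=w_i$ constant in $m$, with $w_i=(1-G(z_i))/z_i\neq0$; in particular every $\wti\al_m\neq0$. Substituting $f_-(z_i,m)=w_i/\wti\al_m$ into the recursion and clearing denominators yields $\wti\al_{m+1}=\wti\al_m\,A_i/(B_i-|\wti\al_m|^2)$ with $A_i=w_i(1-z_iw_i)$ and $B_i=z_iw_i$. Equating the two expressions ($i=1,2$) for $\wti\al_{m+1}/\wti\al_m$ gives a linear equation for $|\wti\al_m|^2$ whose solution is independent of $m$; to justify this I would use $z_1\neq z_2$ together with $|1-G(z_i)|\le|z_i|<1$ (as $|\wti\al_m|\le1$, $|f_-|\le1$), which forces $B_i\neq1$ and $G(z_i)\neq0$, hence $A_i\neq0$, and excludes the degenerate case $A_1=A_2$. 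Thus $|\wti\al_m|\equiv a$ is constant, so $\wti\al_{m+1}/\wti\al_m$ is a constant of modulus one and $\wti\al_m=a\,e^{i(t_0+tm)}$, which is precisely $\wti\cK(c,1)$ with $c=-a^2e^{-it}$; since $a$ and $t$ depend only on $z_i,w_i$, hence only on $\cmv$, while $t_0$ is free, the right limit is unique up to a constant phase. The main obstacle is this last step: taming the nonlinear modulus recursion, where the two evaluation points are exactly what is needed to linearize it and force $|\wti\al_m|$ to be constant.
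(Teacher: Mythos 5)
Your proof is correct and takes essentially the same route as the paper's: the identity $\Phi^*_{n+1}(z)/\Phi^*_n(z)=1-z\al_nf_-(z,n)$ (the paper's \eqref{phi-f}), the reduction of ratio asymptotics to the constancy of $\be_m f_-(z,m)$ across right limits, and the same Schur-algorithm recursion \eqref{sa1} in both converse cases. The only spot needing one more line is the exclusion of the degenerate case $A_1=A_2$ in the two-point argument: if $A_1=A_2\neq0$ your consistency equation forces $B_1=B_2$, whence $w_1=w_2$ and $z_1=z_2$, a contradiction with $z_1\neq z_2$ --- this is the same nondegeneracy the paper secures by observing that $z_1g(z_1)\neq z_2g(z_2)$.
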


\begin{proof}
First, observe that it follows from the Szeg\H{o} recursion \eqref{SzRec} that
for all $n\in\Z_+$ and $z\in\D$,
\begin{align}\lb{phi-f}
1-\f{\Phi^*_{n+1}(z)}{\Phi^*_n(z)} = z\al_n\f{\Phi_n(z)}{\Phi^*_n(z)} = z\al_n
f(z;-\ol\al_{n-1},-\ol\al_{n-1},\dots,-\ol\al_0,1).
\end{align}
We refer to \cite[Prop.\ 9.2.3]{OPUC2} for the details on the second
equality. Abbreviating by
$f_n(z)=f(z;-\ol\al_{n-1},-\ol\al_{n-1},\dots,-\ol\al_0,1)$, we see
that ratio asymptotics \eqref{ra} at $z\in\D\setminus\{0\}$ is
equivalent to $\lim_{n\to\infty}\al_nf_n(z)=g(z)\equiv(1-G(z))/z$.

Let $\EC$ be a right limit of $\cmv$ and $\be_n$, $f_\pm(\cdot,n)$, $n\in\Z$,
be the Verblunsky coefficients and Schur functions associated with $\EC$. Then,
$\be_nf_-(z,n)=\lim_{j\to\infty}\al_{n+n_j}f_{n+n_j}(z)$ for all $n\in\Z$,
$z\in\D$, and some sequence $\{n_j\}_{j\in\N}$.

By Theorem~\ref{Khrushchev Class Characterization}, if
$\EC\in\cK(\infty)$ then at most one $\be_n$ is nonzero, and hence
$\be_0f_-(z,0)=0$ for all $z\in\D$. If $\EC\in\wti\cK(c,1)$ then
$|\be_n|=\sqrt{|c|}$ and $\ol\be_{n+1}\be_n=-c$ for all $n\in\Z$.
Then the Schur algorithm implies that $\be_0f_-(z,0)$ is a function
that depends only on the value of $c$. Since in both cases
$\be_0f_-(z,0)$ is independent of the sequence $n_j$, it follows
that $\lim_{n\to\infty}\al_nf_n(z)=\be_0f_-(z,0)$ for all $z\in\D$.
Thus, by \eqref{phi-f}, ratio asymptotics holds for all $z\in\D$.

Conversely, by \eqref{phi-f}, ratio asymptotics at
$z\in\D\setminus\{0\}$ implies $\be_nf_-(z,n)=g(z)$ for all
$n\in\Z$. By the Schur algorithm we have
\begin{align}\lb{sa1}
f_-(z,n+1)[1-zg(z)]=zf_-(z,n)-\ol\be_n \,\text{ for all }\, n\in\Z.
\end{align}

If \eqref{ra} holds at $z_0\neq0$ and the limit is $1$, then by
\eqref{phi-f} $g(z_0)=0$. Thus, by \eqref{sa1} there is at most one
nonzero $\be_n$ since $\be_{n_0}\neq0$ implies inductively that
$f_-(z_0,n)\neq0$ and hence $\be_n=0$ (since $g(z_0)=0$) for all
$n>n_0$. By Theorem~\ref{Khrushchev Class Characterization},
$\EC\in\cK(\infty)$.

Finally consider the case where ratio asymptotics holds at two
different points $z_1, z_2 \in \D\setminus \{0\}$ and the limit is
not $1$ at either point.  Then by \eqref{phi-f}, $g(z_1)\neq0$ and
$g(z_2)\neq0$ and hence $\be_n\neq0$ for all $n\in\Z$. We also see
that $z_1g(z_1)\neq z_2g(z_2)$ since otherwise it follows from
\eqref{sa1} that $z_1=z_2$. Thus, multiplying \eqref{sa1} by
$\be_{n+1}/(zg(z))$, substituting $z=z_j$, $j=1,2$, and subtracting
the results then yields
\begin{align}\lb{sa2}
\be_{n+1}\ol\be_n =
\f{\f{1}{z_1}-g(z_1)-\f{1}{z_2}+g(z_2)}{\f{1}{z_1g(z_1)}-\f{1}{z_1g(z_1)}}
\,\text{ for all }\, n\in\Z.
\end{align}
Similarly, multiplying $\eqref{sa1}$ by $|\be_n|^2\be_{n+1}$ and evaluating at
$z=z_1$ one obtains,
\begin{align}\lb{sa3}
|\be_n|^2=\f{z_1g(z_1)\be_{n+1}\ol\be_n}{g(z_1)(1-z_1g(z_1))+\be_{n+1}\ol\be_n}
\,\text{ for all }\, n\in\Z.
\end{align}
By \eqref{sa2} the right-hand side of \eqref{sa3} is $n$-independent, and hence
$|\be_n|$ as well as $\be_{n+1}\ol\be_n$ are $n$-independent constants uniquely
determined by the ratio asymptotics \eqref{ra} at $z_1$ and $z_2$. Thus,
$\EC\in\wti\cK(c,1)$ with $c=-\ol\be_{n+1}\be_n\neq0$. Since $c$ is determined
by the ratio asymptotics at $z_1$ and $z_2$, all right limits are the same up
to multiplication by a constant phase.
\end{proof}

\begin{remark}
This theorem extends an earlier result of Khrushchev \cite[Thm.\
A]{Khrushchev}.
\end{remark}

We conclude with the
\begin{proof}[Proof of Theorem \ref{Simon and Khrushchev Class spectrum}]
Let $H$ belong to the Simon Class and let $a,b,c$ be as in $(iii)$
of Theorem~\ref{Simon Class Characterization}. If $a,c>0$ then $H$
is a periodic whole-line Jacobi matrix which is well known to be
reflectionless on its spectrum (\cite{Teschl}). If $a=0$ or $c=0$
then $H$ is a direct sum of identical $2 \times 2$ (or $1 \times 1$)
self-adjoint matrices and so has pure point spectrum of infinite
multiplicity supported on at most two points.

For $\EC$ in the Khrushchev Class with $k< \infty$ the same analysis goes
through: as long as $|a|,|b|<1$ we get a reflectionless operator. If one of
them or both are unimodular then it is easy to see that $\EC$ is a direct sum
of $2 \times 2$ (or $1 \times 1$) matrices. If $k=\infty$ then $\EC$ is either
reflectionless or belongs to the class of matrices from Example~\ref{example}.
\end{proof}

%
%
%
%


\begin{thebibliography}{999}

%
\bibitem{BP1} S.~V.~Breimesser and D.~B.~Pearson,
{\it Asymptotic value distribution for solutions of the
Schr\"odinger equation}, Math.~Phys.~Anal.~Geom.\ {\bf 3} (2000),
385--403.

%
\bibitem{BP2} S.~V.~Breimesser and D.~B.~Pearson,
{\it Geometrical aspects of spectral theory and value distribution
for Herglotz functions}, Math.~Phys.~Anal.~Geom.\ {\bf 6} (2003),
29--57.

%
\bibitem{BRS} J.~Breuer, E.~Ryckman, and B.~Simon, in preparation.

%
\bi{CMV} M.~J.~Cantero, L.~Moral, and L.~Vel\'azquez, {\it
Five-diagonal matrices and zeros of orthogonal polynomials on the
unit circle}, Lin.~Algebra~Appl.\ {\bf 362} (2003), 29--56.

%
\bibitem{DJ87} C.~De Concini and R.~A.~Johnson, {\it The
algebraic-geometric AKNS potentials}, Ergod.~Th.~Dyn.~Syst.\ {\bf 7}
(1987), 1--24.

%
\bi{Cr89} W.~Craig, {\it The trace formula for Schr\"odinger
operators on the line}, Commun.~Math.~Phys.\ {\bf 126} (1989),
379--407.

%
\bi{DS83} P.~Deift and B.~Simon, {\it Almost periodic Schr\"odinger
operators III. The absolutely continuous spectrum in one dimension},
Commun.~Math.~Phys.\ {\bf 90} (1983), 389--411.

%
\bibitem{Geronimus} Ya.~L.~Geronimus, {\it On polynomials orthogonal on the circle, on trigonometric moment problem, and on allied Carath\'eodory and Schur functions},
Mat.~Sb.\ {\bf 15} (1944), 99--130.

%
\bibitem{GKT96} F.~Gesztesy, M.~Krishna, and G.~Teschl, {\it On
isospectral sets of Jacobi operators,} Commun.~Math.~Phys.\ {\bf
181} (1996), 631--645.

%
\bi{GMZ08} F.~Gesztesy, K.~A.~Makarov, and M.~Zinchenko, {\it Local
AC spectrum for reflectionless Jacobi, CMV, and Schr\"odinger
operators}, Acta~Appl.~Math.\ \textbf{103} (2008), 315--339.

%
\bi{GY06} F.~Gesztesy and P.~Yuditskii, {\it Spectral properties of
a class of reflectionless Schr\"odinger operators}, J.~Funct.~Anal.\
{\bf 241} (2006), 486--527.

%
\bi{GZ06b} F.~Gesztesy and M.~Zinchenko, {\it A Borg-type theorem
associated with orthogonal polynomials on the unit circle},
J.~Lond.~Math.~Soc.\ (2) {\bf 74} (2006), 757--777.

%
\bi{GZ06a} F.~Gesztesy and M.~Zinchenko, {\it Weyl--Titchmarsh
theory for CMV operators associated with orthogonal polynomials on
the unit circle}, J.~Approx.~Theory {\bf 139} (2006), 172--213.

%
\bi{GZ09} F.~Gesztesy and M.~Zinchenko, {\it Local spectral
properties of reflectionless Jacobi, CMV, and Schr\"odinger
operators}, J.~Diff.~Eqs.\ \textbf{246} (2009), 78--107.

%
\bi{Gol-Nevai} L.~Golinskii and P.~Nevai, {\it Szego difference
equations, transfer matrices and orthogonal polynomials on the unit
circle}, Commun.~Math.~Phys.\ {\bf 223} (2001), 223--259.

%
\bibitem{Jak-Last} V.~Jak\v si\'c and Y.~Last, {\it Spectral structure of
Anderson type Hamiltonians}, Invent.~Math.\ {\bf 141} (2000),
561--577.

%
\bi{Jo82} R.~A.~Johnson, {\it The recurrent Hill's equation},
J.~Diff.~Eqs.\ {\bf 46} (1982), 165--193.

%
\bi{Khrushchev0} S.~Khrushchev, {\it Schur's algorithm, orthogonal
polynomials, and convergence of Wall's continued fractions in $L\sp
2({\Bbb T})$}, J.~Approx.~Theory {\bf 108} (2001), 161--248.

%
\bibitem{Khrushchev} S.~Khrushchev, {\it Classification
theorems for general orthogonal polynomials on the unit circle},
J.~Approx.~Theory {\bf 116} (2002), 268--342.

%
\bi{Ko84} S.~Kotani, {\it Ljapunov indices determine absolutely
continuous spectra of stationary random one-dimensional
Schr\"odinger operators}, in ``{\it Stochastic Analysis}'', K.~It{\v
o} (ed.), North-Holland, Amsterdam, 1984, pp.~225--247.

%
\bi{Ko87a} S.~Kotani, {\it One-dimensional random Schr\"odinger
operators and Herglotz functions}, in ``{\it Probabilistic Methods
in Mathematical Physics\/}'', K.~It{\v o} and N.~Ikeda (eds.),
Academic  Press, New York, 1987, pp.~219--250.

%
\bi{KK88} S.~Kotani and M.~Krishna, {\it Almost periodicity of some
random potentials}, J.~Funct.~Anal.\ {\bf 78} (1988), 390--405.

%
\bibitem{Last-Simon} Y.~Last and B.~Simon, {\it The essential spectrum of
Schr\"odinger, Jacobi, and CMV operators}, J.~Anal.~Math.\ {\bf 98}
(2006), 183--220.

%
\bi{MPV08} M.~Melnikov, A.~Poltoratski, and A.\ Volberg, {\it
Uniqueness theorems for Cauchy integrals}, Preprint (2007),
\texttt{arXiv:math-cv/0704.0621}

%
\bi{NVY08} F.~Nazarov, A.~Volberg, and P.~Yuditskii, {\it
Reflectionless measures with a point mass and singular continuous
component}, Preprint (2007), \texttt{arXiv:math-ph/0711.0948}

\bi{nevanlinna} R.~Nevanlinna, {\it Analytic Functions}, translated from the second German edition by Phillip Emig, {\it Die
   Grundlehren der mathematischen Wissenschaften}, Band 162, Springer-Verlag, New York-Berlin, 1970.

%

\bi{PY03} F.~Peherstorfer and P.~Yuditskii, {\it Asymptotic behavior
of polynomials orthonormal on a homogeneous set}, J.~Anal.~Math.\
{\bf 89} (2003), 113--154.

%
\bibitem{PR} F.~Peherstorfer and P.~Yuditskii, {\it Almost periodic Verblunsky
coefficients and reproducing kernels on Riemann surfaces},
J.~Approx.~Theory {\bf 139} (2006), 91--106.

%
\bi{PR08} A.~Poltoratski and C.~Remling, {\it Reflectionless
Herglotz functions and Jacobi matrices}, to appear in
Commun.~Math.~Phys.\

%
\bibitem{Rakhmanov1} E.~A.~Rakhmanov, {\it On the asymptotics of the ratio of
orthogonal polynomials}, Math.~USSR~Sb.\ {\bf 32} (1977), 199--213.

%
\bibitem{Rakhmanov2} E.~A.~Rakhmanov, {\it On the asymptotics of the ratio of
orthogonal polynomials, II}, Math.~USSR~Sb.\ {\bf 46} (1983),
105--117.

%
\bibitem{RemSc} C.~Remling, {\it The absolutely continuous spectrum of
one-dimensional Schr\"odinger operators}, Math.~Phys.~Anal.~Geom.\
{\bf 10} (2007), 359--373.

%
\bibitem{Remling} C.~Remling, {\it The absolutely continuous spectrum of
Jacobi matrices}, Preprint (2007), \texttt{arXiv:math-sp/0706.1101}

%
\bibitem{Simon-Moment} B.~Simon {\it The classical moment problem as a
self-adjoint finite difference operator}, Adv.~Math.\ {\bf 137}
(1998), 82--203.

%
\bibitem{Simon-Weak} B.~Simon, {\it Ratio asymptotics and weak asymptotic
measures for orthogonal polynomials on the real line},
J.~Approx.~Theory {\bf 126} (2004), 198--217.

%
\bibitem{OPUC1} B.~Simon, \textit{Orthogonal Polynomials on the Unit
Circle, Part~1: Classical Theory}, AMS Colloquium Series, {\bf
54.1}, American Mathematical Society, Providence, RI, 2005.

%
\bibitem{OPUC2} B.~Simon, \textit{Orthogonal Polynomials on the Unit
Circle, Part~2: Spectral Theory}, AMS Colloquium Series, {\bf 54.2},
American Mathematical Society, Providence, RI, 2005.

%
\bi{Si07a} R.~Sims, {\it Reflectionless Sturm--Liouville equations},
J.~Comp.~Appl.~Math.\ {\bf 208} (2007), 207--225.

%
\bi{SY95a} M.~Sodin and P.~Yuditskii, {\it Almost periodic
Sturm--Liouville operators with Cantor homogeneous spectrum and
pseudoextendible Weyl functions}, Russ.~Acad.~Sci.~Dokl.~Math.\ {\bf
50} (1995), 512--515.

%
\bi{SY95} M.~Sodin and P.~Yuditskii, {\it Almost periodic
Sturm--Liouville operators with Cantor  homogeneous spectrum},
Comment.~Math.~Helvetici {\bf 70} (1995), 639--658.

%
\bi{SY96} M.~Sodin and P.~Yuditskii, {\it Almost-periodic
Sturm--Liouville operators with homogeneous spectrum}, in {\it
``Algebraic and Geometric Methods in Mathematical Physics''},
A.~Boutel de Monvel and A.~Marchenko (eds.), Kluwer, 1996, pp.\
455--462.

%
\bi{SY97} M.~Sodin and P.~Yuditskii, Almost periodic Jacobi matrices
with homogeneous spectrum, infinite dimensional Jacobi inversion,
and Hardy spaces of character-automorphic functions, J.~Geom.~Anal.\
{\bf 7} (1997), 387--435.

%
\bibitem{Teschl} G.~Teschl, {\it Jacobi Operators and Completely
Integrable Nonlinear Lattices}, Mathematical Surveys and Monographs,
{\bf 72}, American Mathematical Society, Providence, RI, 2000.

\end{thebibliography}

\end{document}